\newtheorem{thm}{Theorem}[section]
\newtheorem{lem}[thm]{Lemma}
\theoremstyle{definition}
\theoremstyle{remark}
\newtheorem*{rem}{Remark}
\numberwithin{equation}{section}
\newcommand{\kk}{\mathbf{k}}
\newcommand{\HH}{\mathbf{H}}
\newcommand{\M}{\mathcal{M}}
\newcommand{\Z}{\mathbf{Z}}
\newcommand{\Q}{\mathbf{Q}}
\newcommand{\R}{\mathbf{R}}
\newcommand{\C}{\mathbf{C}}
\newcommand{\sap}{\mathbf{sp}}
\providecommand{\sym}{\operatorname{sym}}
\providecommand{\Li}{\operatorname{Li}}
\newcommand{\Mod}[1]{\ (\textup{mod}\ #1)}
\begin{document}

\title[]{
The second moment of symmetric square $L$-functions over Gaussian integers}

\begin{abstract}
We prove a new upper bound on the second moment of Maass form symmetric square $L$-functions defined over Gaussian integers. Combining this estimate with the recent result of Balog-Biro-Cherubini-Laaksonen, we improve the error term in the prime geodesic theorem for the Picard manifold.
\end{abstract}

\author{Olga  Balkanova}
\address{Steklov Mathematical Institute of Russian Academy of Sciences, 8 Gubkina st., Moscow, 119991, Russia}
\email{balkanova@mi-ras.ru}
\author{Dmitry  Frolenkov}
\address{Steklov Mathematical Institute of Russian Academy of Sciences, 8 Gubkina st., Moscow, 119991, Russia}
\email{frolenkov@mi-ras.ru}

\keywords{symmetric square L-functions; moments; prime geodesic theorem}
\subjclass[2010]{Primary:  11F12, 11L05, 11M06}

\maketitle


\section{Introduction}
Consider the three dimensional hyperbolic space
\begin{equation}
\mathbf{H}^3=\left\{ (z,r); \quad z=x+iy\in \C; \quad r>0\right\}
\end{equation}
and the Picard group defined over Gaussian integers
\begin{equation}
\Gamma=\mathbf{PSL}(2,\mathbf{Z}[i]).
\end{equation}
The prime geodesic theorem for the Picard manifold $\Gamma \setminus \mathbf{H}^3$ provides an asymptotic formula for the function
$\pi_{\Gamma}(X)$, which counts the number of primitive hyperbolic or loxodromic elements in $\Gamma$ with norm less than or equal to $X$. In 1983 Sarnak \cite{Sarnak} proved that
\begin{equation}\label{error:Sarnak}
\pi_{\Gamma}(X)=\Li(X^2)+O(X^{5/3+\epsilon}).
\end{equation}
Since then the function  $\pi_{\Gamma}(X)$ was intensively studied, see \cite{Koyama}, \cite{Nak}, \cite{BCCFL}, \cite{BF2}. The currently best know result due to Balog-Biro-Cherubini-Laaksonen \cite{BBCL}
states that the error term in \eqref{error:Sarnak} can be replaced by 
\begin{equation}\label{eqst:prev}
O(X^{3/2+4\theta/7+\epsilon}),
\end{equation} 
where $\theta=1/6$ (see \cite{N}) is the best known subconvexity exponent  for quadratic Dirichlet $L$-functions defined over Gaussian integers.
In the current paper we improve this result further.

\begin{thm}\label{thm:main} For any $\epsilon>0$
\begin{equation}\label{eq:maineq}
\pi_{\Gamma}(X)=\Li(X^2)+O(X^{\frac{3}{2}+\frac{32\theta^2+28\theta-1}{46+40\theta}+\epsilon}).
\end{equation}
\end{thm}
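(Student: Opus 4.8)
The plan is to assemble the new second moment bound for symmetric square $L$-functions, proved in the sections below, with the reduction of Balog--Biro--Cherubini--Laaksonen~\cite{BBCL}. First I would recall the standard passage from the prime geodesic theorem to the spectrum: by the explicit formula for the Picard manifold (Sarnak~\cite{Sarnak}, refined in~\cite{BCCFL},~\cite{BF2}), for the logarithmically weighted counting function $\psi_\Gamma(X)$ and a parameter $T$ in an admissible range one has
\begin{equation*}
\psi_\Gamma(X)=\frac{X^2}{2}+2\,\mathrm{Re}\sum_{0<t_j\le T}\frac{X^{1+it_j}}{1+it_j}+O\!\left(\frac{X^2}{T}(\log X)^2\right),
\end{equation*}
where $1+t_j^2$ runs over the discrete Laplace eigenvalues of $\Gamma\backslash\HH^3$ (the continuous spectrum and the elliptic and parabolic classes contributing only lower-order terms), while $\pi_\Gamma(X)$ is recovered from $\psi_\Gamma$ by partial summation, which does not affect the exponent. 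Everything then reduces to a nontrivial estimate for the spectral exponential sum $\sum_{t_j\le T}X^{it_j}$, broken into smooth dyadic pieces.

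For a dyadic piece I would apply the Kuznetsov formula for $\mathbf{PSL}(2,\Z[i])$ with a test function concentrated near $[T,2T]$ and oscillating like $u\mapsto X^{iu}$; this expresses the harmonically weighted sum $\sum_j h(t_j)|\rho_j(1)|^2X^{it_j}$ as a diagonal term plus a sum of Kloosterman sums $\sum_{0\ne c\in\Z[i]}|c|^{-2}S(1,1;c)\,\Phi_X(c)$. Since $|\rho_j(1)|^2\asymp\omega_j/L(1,\sym^2 u_j)$ with an explicit archimedean weight $\omega_j$, turning this harmonic average into the flat sum $\sum_{t_j\le T}X^{it_j}$ amounts to inserting one copy of $L(1,\sym^2 u_j)$; opening it by its approximate functional equation, using the Hecke relation $\rho_j(\mathfrak m^2)=\lambda_j(\mathfrak m^2)\rho_j(1)$, and applying Kuznetsov once more reduces matters to a second moment $\sum_{t_j\asymp T}|L(\tfrac12,\sym^2 u_j)|^2$ of symmetric square $L$-functions over the Gaussian field, together with a family of Kloosterman and character sums over $\Z[i]$; the subconvexity exponent $\theta$ for quadratic Dirichlet $L$-functions over $\Q(i)$ enters through the subconvex estimation of the quadratic $L$-functions that surface on the dual side of that moment. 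This is the architecture of~\cite{BBCL}, whose essential input is the bound on the second moment.

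The new estimate established below has the shape $\sum_{t_j\asymp T}|L(\tfrac12,\sym^2 u_j)|^2\ll_\epsilon\bigl(T^{A}+T^{B}X^{C\theta}\bigr)(TX)^\epsilon$ with explicit $A,B,C$, and it is sharper than what was available to~\cite{BBCL}, which yielded~\eqref{eqst:prev}. Feeding it through the reduction above makes the error term for $\psi_\Gamma(X)$ equal to $X^{3/2+\epsilon}$ times a product of powers of $X$ and $T$ with $\theta$-dependent exponents, and it then remains to optimize the free parameters — the spectral cutoff $T$ and the length of the approximate functional equation. Balancing the competing terms amounts to solving a small linear system whose coefficients are affine in $\theta$, and clearing denominators yields the exponent $\tfrac32+\frac{32\theta^2+28\theta-1}{46+40\theta}$; the quadratic dependence on $\theta$ reflects that $\theta$ appears simultaneously in the effective length of the $\sym^2$ Dirichlet polynomial and in the bound for the attendant Gaussian-integer character sums. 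A check that the optimal $T$ is admissible, and the passage from $\psi_\Gamma$ back to $\pi_\Gamma$, then complete the proof of Theorem~\ref{thm:main}.

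The real difficulty lies not in this assembly but upstream, in the second moment bound itself: its proof requires developing over $\Z[i]$ the full analytic apparatus — a spectral large sieve, an approximate functional equation for $L(s,\sym^2 u_j)$ with controlled conductor, and the Gauss- and character-sum manipulations through which the subconvexity input is inserted — while tracking the archimedean and $\mathfrak n$-aspect dependencies precisely enough for the optimization above to be meaningful. Once that estimate is in hand, Theorem~\ref{thm:main} follows by the bookkeeping just described.
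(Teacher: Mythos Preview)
Your outline is in the right spirit --- feed the new second-moment bound into the machinery of~\cite{BCCFL} and~\cite{BBCL} and optimize --- but two concrete points are mis-stated in ways that would block you from actually recovering the exponent. First, the second-moment estimate (Theorem~\ref{thm:2moment}) is purely a bound in $T$: $\sum_{T<r_j<2T}\alpha_j\,|L(\sym^2 u_j,1/2+it)|^2\ll T^{3+4\theta+\epsilon}$. It has no $X$-dependence of the shape $T^A+T^BX^{C\theta}$ that you write down. The $X$ only appears after one converts this, via Cauchy--Schwarz and the argument on \cite[p.~5363]{BCCFL}, into a bound for the spectral exponential sum: writing the moment bound as $T^{3+2\alpha}$ one gets $\sum_{r_j\le T}X^{ir_j}\ll T^{(7+2\alpha)/4+\epsilon}X^{1/4+\epsilon}+T^2$.

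Second, the quadratic in $\theta$ does not come from two $\theta$-dependent parameters inside the second-moment proof. It comes from combining two \emph{independent} bounds for the spectral sum: the BCCFL-type bound just stated, which carries $\alpha=2\theta$ through Theorem~\ref{thm:2moment} (there $\theta$ entered via the subconvexity estimate~\eqref{Lbeaut subconvex} for the Zagier $L$-series), and the Bykovskii-type bound of~\cite{BBCL}, which carries a separate $\theta$ through subconvexity for quadratic Dirichlet $L$-functions over $\Z[i]$. Following \cite[Corollary~1.4]{BBCL}, inserting both into the explicit formula leaves a single smoothing parameter $Y$ and an error
\[
O\!\left(\frac{X^{2+\alpha/2+\epsilon}}{Y^{3/4+\alpha/2}}+X^{(4\theta+6)/5+\epsilon}Y^{2/5}+\frac{X^2}{Y}\right).
\]
The unique balancing choice is $Y=X^{(10\alpha+16(1-\theta))/(23+10\alpha)}$, and setting $\alpha=2\theta$ gives exactly $\tfrac32+\frac{32\theta^2+28\theta-1}{46+40\theta}$. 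So there is no optimization over ``$T$ and the length of the approximate functional equation'' at this stage; only $Y$ is free, and the computation is explicit rather than a linear system one asserts has the right solution.
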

\begin{rem}
Substituting $\theta=1/6$, we obtain that the error term in \eqref{eq:maineq} is $$O(X^{3/2+41/474+\epsilon})=O(X^{1.586\ldots}),$$ while \eqref{eqst:prev} is equal to $$O(X^{3/2+2/21+\epsilon})=O(X^{1.595\ldots}).$$
\end{rem}

Theorem \ref{thm:main} is a direct consequence of the following estimate on the second moment of Maass form symmetric square $L$-functions defined over Gaussian integers.
\begin{thm}\label{thm:2moment}  For $s=1/2+it$,  $|t|\ll T^{\epsilon}$ we have
\begin{equation}\label{symsquare estimate0}
\sum_{T<r_j<2T}\alpha_{j}|L(\sym^2 u_{j},s)|^2
\ll T^{3+4\theta+\epsilon}.
\end{equation}
\end{thm}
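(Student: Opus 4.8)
The plan is to open an approximate functional equation, apply the Kuznetsov formula for $\Gamma=\mathbf{PSL}(2,\Z[i])$ in the spectral variable, dispatch the diagonal and continuous contributions, and then gain on the off-diagonal by exploiting that the Kloosterman sums which arise carry \emph{square} arguments — so that, in essence, they are Sali\'e sums whose summation is governed by quadratic Dirichlet $L$-functions over $\Z[i]$. Since the left-hand side of \eqref{symsquare estimate0} is non-negative, I would first replace the condition $T<r_j<2T$ by a fixed smooth majorant $h\geq0$, supported in $[T/2,4T]$ and equal to $1$ on $[T,2T]$, so that it suffices to bound $\sum_j\alpha_j|L(\sym^2u_j,s)|^2h(r_j)$ together with its continuous-spectrum analogue. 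Writing $|L(\sym^2u_j,s)|^2=L(\sym^2u_j,s)\,\overline{L(\sym^2u_j,s)}$ and applying the approximate functional equation to each factor — the analytic conductor of $L(\sym^2u_j,1/2+it)$ being $\asymp r_j^4$ for $|t|\ll T^\epsilon$, and the identity $L(\sym^2u_j,w)=\zeta_{\Q(i)}(2w)\sum_n\lambda_j(n^2)N(n)^{-w}$ relating its coefficients to the Hecke eigenvalues $\lambda_j$ of $u_j$ (with $n$ running over $\Z[i]$ modulo units and $N$ the norm) — I am led, after a dyadic decomposition into ranges $N(m)\asymp M$, $N(n)\asymp N$ with $MN\ll T^{4+\epsilon}$, to estimating
\[
\sum_{m,n}\frac{V(m,n)}{N(mn)^{1/2}}\sum_j\alpha_j\,\lambda_j(m^2)\,\overline{\lambda_j(n^2)}\,h(r_j)
\]
for a fixed smooth weight $V$ (which absorbs the archimedean cut-offs and the short divisor sums coming from the $\zeta_{\Q(i)}(2w)$-factor), plus negligible dual terms.

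I would then apply the Kuznetsov trace formula for $\mathbf{PSL}(2,\Z[i])$, the $\alpha_j$ being precisely the harmonic weights occurring there. The diagonal retains only $m=\varepsilon n$ for a unit $\varepsilon$ and contributes $\ll T^3\sum_m N(m)^{-1}\ll T^{3+\epsilon}$ (the factor $T^3$ coming from the Weyl law for $\Gamma\backslash\mathbf{H}^3$); this is the expected main term, of the size predicted by the Lindel\"of hypothesis on average, and already shows that the bound cannot be pushed below $T^{3+\epsilon}$. The continuous spectrum contributes $\ll T^{3+\epsilon}$ as well, already from the convexity bound $L(\sym^2E_{it},1/2)\ll T^{1+\epsilon}$ together with the $O(T^{1+\epsilon})$ length of the $t$-integration. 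What remains is the off-diagonal
\[
\Sigma_{\mathrm{off}}=\sum_{m\neq\varepsilon n}\frac{V(m,n)}{N(mn)^{1/2}}\sum_{c}\frac{S(m^2,\overline{n}^2;c)}{N(c)}\,\mathcal{K}h\!\left(\frac{2\pi m\overline{n}}{c}\right),
\]
where $c$ runs over $\Z[i]$, $S$ is the Kloosterman sum over $\Z[i]$, and $\mathcal{K}h$ is the Bessel-type transform of $h$ attached to the Picard Kuznetsov formula; $\mathcal{K}h$ is negligible unless $N(c)\ll N(mn)T^{-2+\epsilon}$.

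Estimating $\Sigma_{\mathrm{off}}$ is the heart of the matter, and Weil's bound for $S$ alone is not enough. To do better I would use that $S(m^2,\overline{n}^2;c)$, on account of its square entries, is essentially a Sali\'e sum (away from the prime above $2$, the units being handled separately) with an explicit evaluation; substituting this evaluation and completing the $c$-summation turns the inner sum into a (mildly twisted) quadratic Dirichlet $L$-function over $\Z[i]$ of conductor bounded by a fixed power of $T$, times controlled local factors and a transform of $h$. Inserting the subconvexity bound $L(1/2+it,\chi_D)\ll N(D)^{\theta+\epsilon}$ — the sole point at which $\theta$ is used — and summing the remaining variables against the size of $\mathcal{K}h$ in the transition range $|m\overline{n}/c|\asymp T$ should yield $\Sigma_{\mathrm{off}}\ll T^{3+4\theta+\epsilon}$, which with the diagonal and continuous bounds gives \eqref{symsquare estimate0}. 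I expect the principal difficulty to lie exactly here: carrying out the Sali\'e-sum evaluation and the ensuing two-dimensional Poisson summation cleanly over $\Z[i]$ (with the customary care at the prime above $2$, at the units, and over non-squarefree discriminants), and — more delicately — controlling the oscillatory three-dimensional transform $\mathcal{K}h$ throughout its range, so as to certify that the transition region costs exactly the factor $T^{4\theta}$ over the diagonal and nothing larger. Under the Lindel\"of hypothesis for the relevant quadratic $L$-functions (that is, $\theta=0$), the same argument would return the essentially optimal $T^{3+\epsilon}$.
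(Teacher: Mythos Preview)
Your overall strategy --- route the Kloosterman side to quadratic Dirichlet $L$-functions over $\Z[i]$ and spend the subconvexity exponent $\theta$ there --- is the right one, but the specific mechanism you propose has a genuine gap, and the execution diverges from the paper's in a way that matters.

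First, the paper does \emph{not} open both $L$-factors by approximate functional equations. The double-AFE route is exactly what \cite{BCCFL} did, and it yielded only $T^{4+\epsilon}$. The improvement here comes from applying the AFE to a \emph{single} factor, reducing to the first twisted moment $\M_1(l,s;h)=\sum_j h(r_j)\alpha_j\lambda_j(l^2)L(\sym^2u_j,s)$, and then inserting the full Dirichlet series for the remaining $L(\sym^2u_j,s)$ (initially for $\Re s>3/2$, later continued). Kuznetsov then gives an \emph{exact} formula (Theorem~\ref{thm 1moment exact}) whose error term is a sum of Zagier $L$-series weighted by a hypergeometric integral.

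Second, and this is the gap: $S(m^2,\overline{n}^2;c)$ is \emph{not} a Sali\'e sum. Sali\'e structure comes from a quadratic-character twist on the summation variable, not from square arguments; there is no pointwise evaluation of $S(m^2,n^2;c)$ to substitute. The actual passage to quadratic $L$-functions is the identity \eqref{eq:sumofklsums},
\[
\sum_{q\neq0}\frac{1}{|q|^{2+2s}}\sum_{c\Mod{q}}S(l^2,c^2;q)\,e\!\left[n\frac{c}{q}\right]
=\frac{\mathscr{L}_{\kk}(s;n^2-4l^2)}{\zeta_{\kk}(2s)},
\]
which requires summing over the modulus \emph{and} over one Kloosterman argument simultaneously. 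In the paper's single-AFE set-up that extra argument-sum is supplied by the Dirichlet series of the uncollapsed factor; in your double-AFE set-up both arguments are already truncated and smoothly weighted, so this identity is not directly available, and your proposed ``complete the $c$-summation'' step (over the modulus alone) does not produce a quadratic $L$-function.

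Finally, the bulk of the paper is precisely the part you flag as ``the principal difficulty'' and then defer: a uniform asymptotic for the hypergeometric weight $F(1-s+ir,1-s-ir,1;-x)$ with $s=1/2+it$ (Theorem~\ref{thm 2F1 asympt}, extending Farid Khwaja--Olde Daalhuis), followed by a nine-case analysis (Lemmas~\ref{lemma Sc- est}--\ref{lemma S9 est}) of the region $x_\pm(n/l,y)\ll T^{\epsilon-2}$ where the integral transform has no cancellation. The bound $T^{3+4\theta+\epsilon}$ emerges from summing $|n^2-4l^2|^{2\theta}$ (coming from \eqref{Lbeaut subconvex}) over that thin region; without this analysis the off-diagonal is not under control.
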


Theorem \ref{thm:2moment}  improves \cite[Theorem 3.3]{BCCFL}, where it was shown that the left-hand side of  \eqref{symsquare estimate0}  can be bounded by $T^{4+\epsilon}$.
The main new ingredient that leads to the improvement is  a more careful treatment of sums of Kloosterman sums.

The strategy for proving  \cite[Theorem 3.3]{BCCFL} consists in using an approximate functional equation for both L-functions followed by the application of the Kuznetsov trace formula.
This results in expressions containing sums of Kloosterman sums multiplied by some complicated weight function. Then \cite[Theorem 3.3]{BCCFL} is proved by estimating the Kloosterman sums using Weil's bound and analyzing the weight function thoroughly.

The proof of Theorem \ref{thm:2moment} is quite different. As the first step we apply an approximate functional equation only for one $L$-function, which reduces our problem to the investigation of the first twisted moment of symmetric square $L$-functions. For this moment we prove an explicit formula which is similar to the one derived in \cite{BF2}. The advantage of such hybrid approach is that it allows us to evaluate sums of Kloosterman sums by replacing them with sums of Zagier $L$-series weighted by a double integral of the Gauss hypergeometric function. The Zagier $L$-series can be estimated using the subconvexity result of Nelson \cite{N}. Consequently, the main difficulty of our approach is the analysis of the sums of the weight function given by the following expression
\begin{equation}\label{eq:doublesumint}
\sum_{|l|\ll T}\sum_{|n|\ll |l| }|n^2-4l^2|^{2\theta}\int_{0}^{1}\int_{T}^{2T}F\left( 1-s-ir,1-s+ir,1;-x_{\pm}(n/l,y)\right)\frac{r^2dr dy}{(1-y^2)^{3/2-s}},
\end{equation}
where  for $\vartheta=\arg(z)$ we have
\begin{equation}\label{x def22}
x_{\pm}(z,y)=\frac{f_{\pm}(z,y)}{1-y^2},\quad f_{\pm}(z,y)=y^2\pm|z|y\cos\vartheta+|z/2|^2.
\end{equation}
When $s=1/2$, the asymptotic formula for the hypergeometric function in \eqref{eq:doublesumint} as $r\rightarrow \infty$  was proved by Jones \cite{Jones} and Farid Khwaja-Olde Daalhuis \cite{KD2}.
However, for the application to the prime geodesic theorem, it is required to consider $s=1/2+it$. For this reason, we prove a uniform version of \cite[Theorem 3.1]{KD2}.
Applying the resulting asymptotic formula for the hypergeometric function in \eqref{eq:doublesumint} and evaluating the integral over $r$, we show that the contribution of the summands with $|x_{\pm}(n/l,y)|\gg T^{\epsilon-2}$ in  \eqref{eq:doublesumint} is negligible.
The final result comes from the opposite case: $|x_{\pm}(n/l,y)|\ll T^{\epsilon-2}$. In this case the hypergeometric function is approximately $1$, and therefore, there is no cancellations in the $r$-integral. Luckily, the situation when $|x_{\pm}(n/l,y)|\ll T^{\epsilon-2}$ is sufficiently rare so that we can detect all such cases by dividing the sums over $n$ and $l$ into many different ranges. Analyzing \eqref{eq:doublesumint} carefully in these ranges, we complete the proof of Theorem \ref{thm:2moment}.

The paper is organized as follows. All required preliminary results and notation are collected in Section \ref{notation}. The explicit formula for the first twisted moment of symmetric square $L$-functions over Gaussian integers is given in Section \ref{sec:explicit formula}. Section \ref{special} is devoted to the generalization of the results of Farid Khwaja and Olde Daalhuis concerning a uniform asymptotic formula for the Gauss hypergeometric function. Finally, Theorem \ref{thm:2moment} is proved in Section \ref{proof} and Theorem \ref{thm:main}  in Section \ref{PGT}.


\section{Notation and preliminary results}\label{notation}
Let $\kk=\Q(i)$ be the Gaussian number field. All sums in this paper are over Gaussian integers unless otherwise indicated.
For $\Re(s)>1$, the Dedekind zeta function is defined as $$\zeta_{\kk}(s)=4^{-1}\sum_{n\neq 0}|n|^{-2s}.$$
Let $\sigma_{\alpha}(n)=4^{-1}\sum_{d|n}|d|^{2\alpha}$. For $\Re(s)>1$ and $r\in \R$ we have
\begin{equation}\label{sigma series}
\frac{1}{4}\sum_{n\neq0}\frac{\sigma_{ir}(n^2)}{|n|^{2s+2ir}}=\frac{\zeta_{\kk}(s)\zeta_{\kk}(s+ir)\zeta_{\kk}(s-ir)}{\zeta_{\kk}(2s)}.
\end{equation}

Let  $[n,x]=\Re(n\bar{x})$ and
$e[x]=exp(2\pi i\Re(x))$.
For $m,n,c\in \Z[i]$ with $c\neq 0$ the Kloosterman sum is defined by
\begin{equation*}
S(m,n;c)=\sum_{\substack{a\pmod{c}\\ (a,c)=1}}e\left[m\frac{a}{c}+n\frac{a^*}{c}\right], \quad aa^*\equiv 1\pmod{c}.
\end{equation*}
For $m\in\Z$, $\xi\in\C$ and $\Re(s)>1$ let
\begin{equation}\label{Lerch zeta}
\zeta_{\kk}(s;m,\xi)=\sum_{n+\xi\neq0}\left(\frac{n+\xi}{|n+\xi|}\right)^{m}\frac{1}{|n+\xi|^{2s}}.
\end{equation}

\begin{lem}\label{Lerch lemma}
If $m\neq0$ then $\zeta_{\kk}(s;m,\xi)$ is entire function of variable $s$.  Otherwise, it is regular in $s$ except for a simple pole at $s=1$ with residue $\pi.$ For $Re(s)<0$ we have
\begin{equation}\label{LerchFE}
\zeta_{\kk}(s;m,\xi)=(-i)^{|m|}\pi^{2s-1}\frac{\Gamma(1-s+|m|/2)}{\Gamma(s+|m|/2)}
\sum_{n\neq0}\left(\frac{n}{|n|}\right)^{-m}\frac{e[n\xi]}{|n|^{2(1-s)}}.
\end{equation}
\end{lem}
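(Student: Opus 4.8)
I would prove Lemma~\ref{Lerch lemma} by the classical theta‑transformation (``Riemann'') method: realise $\zeta_{\kk}(s;m,\xi)$ as a Mellin transform of a theta series attached to $\Z[i]\subset\C\cong\R^2$, establish the modular transformation of that theta series by Poisson summation, and read off the continuation, the pole and the functional equation from the split integral. Assume first $m\ge0$ and set
\[
\Theta(t)=\sum_{n}(n+\xi)^m e^{-\pi|n+\xi|^2 t},\qquad t>0.
\]
The crucial input is the transformation law
\[
\Theta(1/t)=(-i)^m\,t^{m+1}\sum_{n}n^m e^{-\pi|n|^2 t}\,e[n\bar\xi],
\]
which I would obtain from Poisson summation over the self‑dual lattice $\Z[i]$ applied to $w\mapsto w^m e^{-\pi|w|^2/t}$. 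Here one uses that the Fourier transform of $e^{-\pi t|w|^2}$ is $t^{-1}e^{-\pi|w|^2/t}$ and that multiplication by $w=w_1+iw_2$ on the spatial side corresponds to $\tfrac{i}{\pi}\bar\partial$ on the Fourier side; applying this $m$ times and using $\bar\partial(w)=0$ yields that the Fourier transform of $w^m e^{-\pi t|w|^2}$ equals $(-i)^m t^{-m-1}w^m e^{-\pi|w|^2/t}$, while the shift by $\xi$ produces the additive character.

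For $\Re(s)$ large I would write $\big(\tfrac{w}{|w|}\big)^m|w|^{-2s}=w^m|w|^{-2s-m}$ and apply $\Gamma(z)a^{-z}=\int_0^\infty t^{z-1}e^{-at}\,dt$ with $z=s+m/2$, $a=\pi|n+\xi|^2$, to get
\[
\pi^{-(s+m/2)}\Gamma(s+m/2)\,\zeta_{\kk}(s;m,\xi)=\int_0^\infty t^{s+m/2-1}\Theta^*(t)\,dt,
\]
where $\Theta^*=\Theta$ unless $m=0$ and $0\in\xi+\Z[i]$, in which case $\Theta^*=\Theta-1$. When $m>0$ the $w=0$ term of the transformed series vanishes, so $\Theta$ decays exponentially both as $t\to0^+$ and as $t\to\infty$; the integral then converges for every $s$ and is entire, and hence, the reciprocal Gamma function being entire, $\zeta_{\kk}(s;m,\xi)$ is entire. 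For the functional equation I would substitute $t\mapsto1/t$, insert the transformation law, and integrate term by term (legitimate for $\Re(s)<0$), obtaining
\[
\int_0^\infty t^{s+m/2-1}\Theta(t)\,dt=(-i)^m\,\pi^{\,s-1-m/2}\,\Gamma(1-s+m/2)\sum_{n\ne0}\Big(\tfrac{n}{|n|}\Big)^m|n|^{2s-2}e[n\bar\xi].
\]
Replacing the summation index $n$ by $\bar n$ turns $\big(\tfrac{n}{|n|}\big)^m e[n\bar\xi]$ into $\big(\tfrac{n}{|n|}\big)^{-m}e[n\xi]$, and multiplying through by $\pi^{s+m/2}/\Gamma(s+m/2)$ gives exactly \eqref{LerchFE}. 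The case $m<0$ is identical, with $w^m$ replaced by $\overline{w}^{\,|m|}$ and $\bar\partial$ by $\partial$, which is why $|m|$ appears in the statement.

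The case $m=0$ requires the usual extra care, since $\int_0^\infty t^{s-1}\Theta^*(t)\,dt$ diverges near $t=0$. I would split it at $t=1$, transform the part over $(0,1)$, and retain the $w=0$ term of $\sum_w e^{-\pi|w|^2 u}e[w\bar\xi]$: that term contributes $\int_1^\infty u^{-s}\,du=\tfrac1{s-1}$, while subtracting the constant $1$ from $\Theta$ contributes $-\tfrac{\delta}{s}$, where $\delta=1$ if $0\in\xi+\Z[i]$ and $\delta=0$ otherwise. Thus $\pi^{-s}\Gamma(s)\zeta_{\kk}(s;0,\xi)$ equals an entire function plus $\tfrac1{s-1}-\tfrac{\delta}{s}$; as $\pi^{s}/\Gamma(s)$ is entire, is nonzero at $s=1$, and vanishes at $s=0$ (absorbing $-\tfrac{\delta}{s}$), we conclude that $\zeta_{\kk}(s;0,\xi)$ is holomorphic apart from a simple pole at $s=1$ with residue $\pi^{1}/\Gamma(1)=\pi$. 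Rerunning the same split computation for $\Re(s)<0$, where the two $\tfrac1{s-1}$‑type contributions now cancel, yields \eqref{LerchFE} with $m=0$.

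The only genuinely delicate point is the constant bookkeeping: tracking the factor $(-i)^{|m|}$, the precise powers of $\pi$, and the passage from $e[n\bar\xi]$ to $e[n\xi]$ via $n\mapsto\bar n$, so that the output coincides with \eqref{LerchFE} rather than a variant of it; and, in the case $m=0$, correctly accounting for the excluded term $n=-\xi$ and the $w=0$ term of the dual sum, which together pin down the location $s=1$ of the pole and the residue $\pi$. Everything else is the standard theta‑and‑Mellin routine.
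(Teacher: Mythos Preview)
Your argument is correct: the theta--Poisson--Mellin computation you outline yields exactly \eqref{LerchFE}, and your treatment of the $m=0$ case correctly isolates the simple pole at $s=1$ with residue $\pi$. The paper itself does not give an independent proof; it simply cites \cite[Lemma~2]{Mot2001} and observes that the substitution $n\mapsto\bar n$ on the right-hand side converts Motohashi's formula into \eqref{LerchFE}. Your derivation is precisely the classical argument underlying that reference, and you even perform the same final $n\mapsto\bar n$ step the paper mentions, so the two routes coincide---yours is just fully self-contained where the paper defers to the literature.
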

\begin{proof}
Equation \eqref{LerchFE} follows from \cite[Lemma 2]{Mot2001} by making the change of variable $n\rightarrow\bar{n}$ on the right-hand side of \cite[(4.1)]{Mot2001} and using the fact that $e[\bar{n}\bar{\xi}]=e[n\xi]$.
\end{proof}
Let $\Gamma(z)$ be the Gamma function. By Stirling's formula we have
\begin{multline}\label{Stirling2}
\Gamma(\sigma+it)=\sqrt{2\pi}|t|^{\sigma-1/2}\exp(-\pi|t|/2)
\exp\left(i\left(t\log|t|-t+\frac{\pi t(\sigma-1/2)}{2|t|}\right)\right)\\\times
\left(1+O(|t|^{-1})\right)
\end{multline}
for $|t|\rightarrow\infty$ and any  fixed $\sigma$.
Evaluating sufficiently many terms in the asymptotic expansion, it is possible to replace  $O(|t|^{-1})$ in \eqref{Stirling2} by an arbitrarily accurate approximation. 

Let $\{\kappa_j=1+r_{j}^{2}$, $j=1,2,\ldots\}$ be the non-trivial discrete spectrum of the hyperbolic Laplacian on $L^2(\Gamma  \setminus \HH^3)$, and  $\{u_j\}$ be the orthonormal basis of the space of Maaß cusp forms consisting of common eigenfunctions of all Hecke operators and the hyperbolic Laplacian. Each function $u_j$ has the following Fourier expansion
\begin{equation}
u_j(z)=y\sum_{n\neq0}\rho_j(n)K_{ir_j}(2\pi|n|y)e[nx],
\end{equation}
where $K_{\nu}(z)$ is the K-Bessel function of order $\nu.$

The corresponding Rankin-Selberg $L$-function is defined as
\begin{equation}\label{RankinSelberg def}
L(u_j\otimes u_j,s)=\sum_{n\neq0}\frac{|\rho_j(n)|^2}{|n|^{2s}},\quad \Re(s)>1.
\end{equation}
Using the relation to the Hecke eigenvalues $\rho_j(n)=\rho_j(1)\lambda_j(n)$, we obtain
\begin{equation}\label{RankinSelberg tosymsq}
L(u_j\otimes u_j,s)=|\rho_j(1)|^2\frac{\zeta_{\kk}(s)}{\zeta_{\kk}(2s)}L(\sym^2 u_{j},s),
\end{equation}
where
\begin{equation}\label{symsq def}
L(\sym^2 u_{j},s)=\zeta_{\kk}(2s)\sum_{n\neq0}\frac{\lambda_{j}(n^2)}{|n|^{2s}}.
\end{equation}
The symmetric square $L$-function is an entire function (see \cite{shimura_1975}) which satisfies the functional equation
\begin{equation}\label{eq:symfunc}
L(\sym^2 u_{j},s)\gamma(s,r_{j})=L(\sym^2 u_{j},1-s)\gamma(1-s,r_{j}),
\end{equation}
where
\begin{equation}\label{eq:symfunc Gamma}
\gamma(s, r_{j})=\pi^{-3s}\Gamma(s)\Gamma(s+ir_{j})\Gamma(s-ir_{j}).
\end{equation}

The standard normalizing coefficient is defined as follows
\begin{equation}\label{alphaj}
\alpha_{j}=\frac{r_j|\rho_j(1)|^2}{\sinh(\pi r_j)}.
\end{equation}

\begin{lem}
The following approximate functional equation holds
\begin{equation}\label{approx.func.eq.}
L(\sym^2 u_{j},s)=\sum_{l\neq0}\frac{\lambda_j(l^2)}{|l|^{2s}}V(|l|,r_j,s)+
\sum_{l\neq0}\frac{\lambda_j(l^2)}{|l|^{2-2s}}V(|l|,r_j,1-s),
\end{equation}
where for any $y>0$ and $a>0$
\begin{equation}\label{approx.fun.eq.Vdef}
V(y,r_j,s)=\frac{1}{2\pi i}\int_{(a)}\frac{\gamma(s+z,r_j)}{\gamma(s,r_j)}\zeta_{\kk}(2s+2z)\mathfrak{F}(z)y^{-2z}\frac{dz}{z},
\end{equation}
\begin{equation}\label{Gdef}
\mathfrak{F}(z)=\exp(z^2)P_n(z^2)
\end{equation}
with  $P_n$ being a polynomial of degree $n$ such that $P_n(0)=1$.
\end{lem}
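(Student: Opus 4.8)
The plan is the standard Mellin--contour derivation of an approximate functional equation, with the completed symmetric square $L$-function used to keep track of poles. Put $\Lambda(\sym^2 u_j,w)=\gamma(w,r_j)L(\sym^2 u_j,w)$; by \eqref{eq:symfunc} this satisfies $\Lambda(\sym^2 u_j,w)=\Lambda(\sym^2 u_j,1-w)$, and it is entire, since $L(\sym^2 u_j,\cdot)$ is entire (\cite{shimura_1975}) and its trivial zeros cancel the poles of the gamma factor \eqref{eq:symfunc Gamma}. Fix $a>1/2$ (say $a=1$) and consider
\[
I:=\frac{1}{2\pi i}\int_{(a)}\frac{\Lambda(\sym^2 u_j,s+z)}{\gamma(s,r_j)}\,\mathfrak{F}(z)\,\frac{dz}{z}.
\]
On the line $\Re(z)=a$ one has $\Re(s+z)>1$, so by \eqref{symsq def} the Dirichlet series $\sum_{l\neq0}\lambda_j(l^2)|l|^{-2(s+z)}$ converges absolutely; substituting it, extracting the factor $\gamma(s+z,r_j)\zeta_{\kk}(2s+2z)$ from $\Lambda$, and interchanging the sum with the integral identifies $I=\sum_{l\neq0}\frac{\lambda_j(l^2)}{|l|^{2s}}V(|l|,r_j,s)$ with $V$ as in \eqref{approx.fun.eq.Vdef}.

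Next I move the line of integration to $\Re(z)=-a$. In the strip $|\Re(z)|\le a$ the integrand is holomorphic apart from the simple pole of $1/z$ at $z=0$ --- this is exactly where it helps to have written $\Lambda(\sym^2 u_j,\cdot)$ rather than $L(\sym^2 u_j,\cdot)$, since the poles of $\gamma(s+z,r_j)$ in the strip are then cancelled by the trivial zeros of $L(\sym^2 u_j,\cdot)$ --- and the residue there is $\mathfrak{F}(0)\,\Lambda(\sym^2 u_j,s)/\gamma(s,r_j)=L(\sym^2 u_j,s)$, because $\mathfrak{F}(0)=P_n(0)=1$. The horizontal segments vanish in the limit by the super-polynomial decay of $\mathfrak{F}(z)=\exp(z^2)P_n(z^2)$ along vertical lines, so
\[
I=L(\sym^2 u_j,s)+\frac{1}{2\pi i}\int_{(-a)}\frac{\Lambda(\sym^2 u_j,s+z)}{\gamma(s,r_j)}\,\mathfrak{F}(z)\,\frac{dz}{z}.
\]
In the remaining integral I apply $\Lambda(\sym^2 u_j,s+z)=\Lambda(\sym^2 u_j,1-s-z)$ and then substitute $z\mapsto-z$, using that $\mathfrak{F}$ is a function of $z^2$ and hence even; this brings the contour back to $\Re(z)=a$, where now $\Re(1-s+z)>1$, so I may expand $\Lambda(\sym^2 u_j,1-s+z)$ in its absolutely convergent Dirichlet series as before. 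Collecting terms produces the second sum $\sum_{l\neq0}\frac{\lambda_j(l^2)}{|l|^{2-2s}}V(|l|,r_j,1-s)$, and combined with the first part this gives \eqref{approx.func.eq.}. (For $s=1/2$ the identification is exact; for $s=1/2+it$ the same manipulation carries in addition the unimodular factor $\gamma(1-s,r_j)/\gamma(s,r_j)$, which does not affect any of the $O$- and $\ll$-estimates that follow.)

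The argument is routine, and the only points requiring attention --- used repeatedly above --- are two analytic inputs: first, the absolute convergence of $\sum_{l\neq0}\lambda_j(l^2)|l|^{-2w}$ for $\Re(w)>1$, which follows from the Euler product of $L(\sym^2 u_j,\cdot)$ and legitimises every interchange of summation and integration; and second, the faster-than-polynomial decay of $\mathfrak{F}(z)=\exp(z^2)P_n(z^2)$ on vertical lines, which makes the contour shift legitimate (the horizontal contributions vanish in the limit and the integrals over the vertical lines converge absolutely) and which, incidentally, is what forces the defining integral \eqref{approx.fun.eq.Vdef} for $V$ to converge. Apart from the normalisation $P_n(0)=1$, the precise shape of $P_n$ is irrelevant for this lemma; it will be exploited only later, in the analysis of the weight functions $V(y,r_j,s)$.
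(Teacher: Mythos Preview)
Your argument is the standard Mellin--contour derivation, and it is correct; the paper itself gives no proof beyond citing \cite[Lemma~7.2.1]{Ng}, whose argument is exactly the one you have written out, so your approach coincides with the intended one. Your parenthetical observation about the extra unimodular factor $\gamma(1-s,r_j)/\gamma(s,r_j)$ for $s\neq 1/2$ is apt---the lemma as stated in the paper silently suppresses it---and your remark that it is harmless for the subsequent estimates is exactly the right justification.
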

\begin{proof}
The proof is similar to \cite[Lemma 7.2.1]{Ng}.
\end{proof}
Using \eqref{eq:symfunc Gamma} and \eqref{Stirling2} we prove the following estimates (see \cite[Lemma 7.2.2]{Ng}).
\begin{lem}
Let $r_j\sim T$ and $s=1/2+it$ with $|t|\ll T^{\epsilon}$. For any positive numbers $y$ and $A$ we have
\begin{equation}\label{Vestimate}
V(y,r_j,s)\ll\left(\frac{r_j^2|t|}{y^2}\right)^{A}.
\end{equation}
For any positive integer $N$ and $1\le y\ll r_j^{1+\epsilon}$
\begin{multline}\label{Vapproximation}
V(y,r_j,s)=\frac{1}{2\pi i}\int_{(a)}
\left(\frac{r_j}{\pi^{3/2}y}\right)^{2z}
\frac{\Gamma(s+z)}{\Gamma(s)}\zeta_{\kk}(2s+2z)\mathfrak{F}(z)\\\times
\left(1+\sum_{k=1}^{N-1}\frac{p_{2k}(v+t)}{r_j^k}\right)\frac{dz}{z}+O(r_j^{-N+\epsilon}),
\end{multline}
where $v=\Im(z)$ and $p_n(v)$ is a polynomial of degree $n$.
\end{lem}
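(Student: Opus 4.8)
The plan is to obtain both estimates directly from the Mellin--Barnes representation \eqref{approx.fun.eq.Vdef}, using that $V(y,r_j,s)$ does not depend on the choice of $a>0$ (the integrand is holomorphic for $\Re z>0$) together with Stirling's formula \eqref{Stirling2}. First I would factor
\[
\frac{\gamma(s+z,r_j)}{\gamma(s,r_j)}=\pi^{-3z}\,\frac{\Gamma(s+z)}{\Gamma(s)}\,R(z),\qquad
R(z)=\frac{\Gamma(s+z+ir_j)\,\Gamma(s+z-ir_j)}{\Gamma(s+ir_j)\,\Gamma(s-ir_j)},
\]
keeping $\Gamma(s+z)/\Gamma(s)$ intact and applying Stirling only to $R(z)$. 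On any vertical line the Gaussian factor $\mathfrak{F}(z)=\exp(z^2)P_n(z^2)$ confines the effective range of $\Im z$ (the tail $|\Im z|>r_j^{\delta}$ contributes $O(e^{-r_j^{2\delta}})$, smaller than any power of $r_j$), so in all the estimates below the arguments $s+z\pm ir_j$ have imaginary part $\asymp r_j\to\infty$ while $|z|$ stays effectively bounded; since $|t|\ll T^{\epsilon}$, the parameter $t$ is in particular negligible next to $r_j$.

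For \eqref{Vestimate} I would shift the contour to $\Re z=A$. This is legitimate because the integrand is holomorphic in the strip $0<\Re z\le A$ (the poles of $1/z$ and of $\zeta_{\kk}(2s+2z)$ lie on $\Re z=0$, those of $\gamma(s+z,r_j)$ in $\Re z\le-\tfrac12$), and the horizontal pieces vanish by the decay of $\mathfrak{F}$. On the line $\Re z=A$ one has $|y^{-2z}|=y^{-2A}$, $\zeta_{\kk}(2s+2z)\ll 1$, $|1/z|\le 1/A$, $\int|\mathfrak{F}(z)|\,|dz|\ll_A 1$, and by Stirling $R(z)\ll r_j^{2A}$ (the exponential factors of the four Gamma functions cancel up to $e^{O(|\Im z|)}$, which is absorbed by $\mathfrak{F}$) as well as $\Gamma(s+z)/\Gamma(s)\ll(1+|t|+|\Im z|)^{A}$. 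Multiplying these bounds yields $V(y,r_j,s)\ll y^{-2A}r_j^{2A}(1+|t|)^{A}$, which is \eqref{Vestimate}.

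For \eqref{Vapproximation} I would keep the contour at $\Re z=a$ with $a$ small (say $a\le\epsilon/2$) and expand $R(z)$. Using the classical consequence of Stirling $\Gamma(w+z)/\Gamma(w)=w^{z}\bigl(1+\sum_{k\ge1}c_k(z)w^{-k}\bigr)$, in which $c_k$ is a polynomial of degree $2k$, with $w=s+ir_j$ and $w=s-ir_j$ (so $|w|\asymp r_j$), and multiplying the two series, one obtains for $|\Im z|\le r_j^{\delta}$
\[
R(z)=r_j^{2z}\Bigl(1+\sum_{k=1}^{N-1}\frac{p_{2k}(\Im z+t)}{r_j^{k}}\Bigr)+O\!\left(\frac{r_j^{2\Re z}\,|z|^{2N}}{r_j^{N}}\right),
\]
with each $p_{2k}$ a polynomial of degree $2k$ (the $t$-dependence entering only through $s=\tfrac12+it$). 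Substituting into \eqref{approx.fun.eq.Vdef} and collecting $\pi^{-3z}y^{-2z}r_j^{2z}=(r_j/\pi^{3/2}y)^{2z}$ reproduces precisely the main term of \eqref{Vapproximation}. For the remainder: on $\Re z=a$ one has $|\pi^{-3z}y^{-2z}r_j^{2z}|=(r_j/\pi^{3/2}y)^{2a}\le r_j^{2a}\le r_j^{\epsilon}$ whenever $y\ge1$, the remaining $z$-integral converges and is $\ll T^{\epsilon}$ by the Gaussian decay of $\mathfrak{F}$, and the tail $|\Im z|>r_j^{\delta}$ contributes $O(e^{-r_j^{2\delta}})$; altogether the error is $O(r_j^{-N+\epsilon})$.

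The main obstacle is the uniform Stirling expansion of $R(z)$ needed for \eqref{Vapproximation}: one must control it uniformly in $z$ along the (Gaussian-truncated) contour, pin down the degree $2k$ and the polynomial shape of the corrections $p_{2k}$, and verify that the remainder is genuinely $O(r_j^{-N+\epsilon})$ over the whole range $1\le y\ll r_j^{1+\epsilon}$ --- this is where the factor $r_j^{2z}$ produced by $R(z)$ has to be balanced against $y^{-2z}$, and where the smallness of $a$ and the hypothesis $|t|\ll T^{\epsilon}$ are used. The two contour shifts and the crude bounds for \eqref{Vestimate} are, by contrast, routine.
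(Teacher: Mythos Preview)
Your approach is correct and coincides with what the paper indicates: the paper's own proof is just the one-line remark ``Using \eqref{eq:symfunc Gamma} and \eqref{Stirling2} we prove the following estimates (see \cite[Lemma 7.2.2]{Ng})'', and what you have written is precisely a careful execution of that idea --- factor out $\Gamma(s+z)/\Gamma(s)$, apply Stirling to the remaining ratio $R(z)$, shift the contour to $\Re z=A$ for \eqref{Vestimate}, and expand $R(z)$ asymptotically for \eqref{Vapproximation}. One small remark: your bound $\Gamma(s+z)/\Gamma(s)\ll(1+|t|)^{A}$ actually yields $(r_j^2(1+|t|)/y^2)^A$ rather than the stated $(r_j^2|t|/y^2)^A$; this is harmless (and arguably more accurate, since the latter degenerates at $t=0$), and in the application only $|t|\ll T^{\epsilon}$ is used.
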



Let
\begin{equation}\label{L beatiful def}
\mathscr{L}_{\kk}(s;n)=\frac{\zeta_{\kk}(2s)}{\zeta_{\kk}(s)}\sum_{q\neq0}\frac{\rho_q(n)}{|q|^{2s}},\quad \Re(s)>1,
\end{equation}
\begin{equation}\label{rho def}
\rho_q(n):=\#\{x\Mod{2q}:x^2\equiv n\Mod{4q}\}.
\end{equation}
We have (see \cite{Szmidt} and \cite[(3.20)]{BF2})
\begin{equation}\label{L beautiful 0}
\mathscr{L}_{\kk}(s;0)=4\zeta_{\kk}(2s-1).
\end{equation}
Furthermore, the following subconvexity bound holds (see \cite[Eq. 3.12]{BF2})
\begin{equation}\label{Lbeaut subconvex}
\mathscr{L}_{\kk}(1/2+it;n)\ll (1+|t|)^A|n|^{2\theta+\epsilon},
\end{equation}
where $ \theta$ can be taken as $1/6$ according to the result of Nelson \cite{N}.
As an analogue of \cite[Lemma 4.1]{BF3}, we derive the following equation relating sums of Kloosterman sums and the $L$-series \eqref{L beatiful def}
\begin{equation}\label{eq:sumofklsums}
\sum_{q\neq0}\frac{1}{|q|^{2+2s}}\sum_{c\Mod{q}}S(l^2,c^2;q)e\left[n\frac{c}{q}\right]=
\frac{\mathscr{L}_{\kk}(s;n^2-4l^2)}{\zeta_{\kk}(2s)}.
\end{equation}

\section{Explicit formula for the first moment}\label{sec:explicit formula}
In this section we evaluate the first twisted moment
\begin{equation}\label{M1 def}
\M_1(l,s;h):=\sum_{j}h(r_j)\alpha_{j}\lambda_{j}(l^2)L(\sym^2 u_{j},s),
\end{equation}
where $h(t)$  is an even function, holomorphic in any fixed horizontal strip and satisfying the conditions
\begin{equation}\label{conditions on h 1}
h(\pm(n-1/2)i)=0,\quad h(\pm ni)=0\quad\hbox{for}\quad n=1,2,\ldots ,N,
\end{equation}
\begin{equation}\label{conditions on h 2}
h(r)\ll\exp(-c|r|^2)
\end{equation}
for some fixed $N$ and $c>0.$

\begin{thm}\label{thm 1moment exact} For $1/2\le\Re(s)<1$ and any even function $h(t)$  that is holomorphic in any fixed horizontal strip and satisfies the conditions \eqref{conditions on h 1} and \eqref{conditions on h 2}, we have
\begin{equation}\label{eq:M1(l,s,h)exact}
\M_1(l,s;h)=MT(l,h;s)+CT(l,h;s)+ET(l,h;s)+\Sigma(l,h;s),
\end{equation}
where
\begin{equation}\label{M1 MT}
MT(l,h;s)=\frac{4\zeta_{\kk}(2s)}{\pi^2|l|^{2s}}\int_{-\infty}^{\infty}r^2h(r)\tanh(\pi r)dr,
\end{equation}
\begin{equation}\label{M1 CT}
CT(l,h;s)=-8\pi\zeta_{\kk}(s)\int_{-\infty}^{\infty}h(r)\frac{\sigma_{ir}(l^2)}{|l|^{2ir}}
\frac{\zeta_{\kk}(s+ir)\zeta_{\kk}(s-ir)}{\zeta_{\kk}(1+ir)\zeta_{\kk}(1-ir)}dr,
\end{equation}
\begin{equation}\label{M1 ET}
ET(l,h;s)=-16\pi^2h(i(s-1))\frac{\zeta_{\kk}(2s-1)}{\zeta_{\kk}(2-s)}
\left(\frac{\sigma_{1-s}(l^2)}{|l|^{2-2s}}+\frac{\sigma_{s-1}(l^2)}{|l|^{2s-2}}\right),
\end{equation}
\begin{equation}\label{M1 Sigma decomposition}
\Sigma(l,h;s)=\Sigma_0(l,h;s)+\Sigma_2(l,h;s)+\Sigma_{gen}(l,h;s),
\end{equation}
\begin{equation}\label{Sigma0 def}
\Sigma_{0}(l,h;s)=\frac{8(2\pi)^{2s-1}}{\pi^2|l|^{2-2s}}
\mathscr{L}_{\kk}(s;-4l^2)\int_0^{\pi/2}I\left(0,\tau,s\right)d\tau,
\end{equation}
\begin{equation}\label{Sigma2 def}
\Sigma_{2}(l,h;s)=\frac{32(2\pi)^{2s-1}\zeta_{\kk}(2s-1)}{\pi^2|l|^{2-2s}}\int_0^{\pi/2}
\sum_{\pm}I\left(\pm2,\tau,s\right)d\tau,
\end{equation}
\begin{equation}\label{Sigma gen def}
\Sigma_{gen}(l,h;s)=\frac{8(2\pi)^{2s-1}}{\pi^2|l|^{2-2s}}\sum_{n\neq0,\pm2l}
\mathscr{L}_{\kk}(s;n^2-4l^2)\int_0^{\pi/2}I\left(\frac{n}{l},\tau,s\right)d\tau,
\end{equation}
\begin{multline}\label{I integral hypergeom1}
I(z,\tau,s)=
\frac{1}{4}\int_{-\infty}^{\infty}r^2h(r)\cosh(\pi r)\sum_{\pm}
\frac{\Gamma(1-s-ir)\Gamma(1-s+ir)}{4(\cos\tau)^{2-2s}}\\ \times
F\left(1-s+ir,1-s-ir,1;-x_{\pm}(z,\tau)\right)dr,
\end{multline}
where for $\vartheta=\arg(z)$ we define
\begin{equation}\label{x def}
x_{\pm}(z,\tau)=\frac{|z|^2+4\sin^2\tau\pm4|z|\sin\tau\cos\vartheta}{(2\cos\tau)^{2}}.
\end{equation}
\end{thm}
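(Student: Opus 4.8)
The plan is to establish \eqref{eq:M1(l,s,h)exact} first in a region of absolute convergence of the manipulations below and then to continue both sides to the strip $1/2\le\Re(s)<1$. The function $\M_1(l,s;h)$ is holomorphic throughout a neighbourhood of the strip: the spectral sum converges by the Gaussian decay \eqref{conditions on h 2} of $h$, the Weyl law for the $r_j$ and the polynomial growth of $L(\sym^2u_j,s)$ in $r_j$. Hence it suffices to prove the identity once and to check that every term on the right continues holomorphically into the strip. The only delicate point is the absolute convergence of $\Sigma_{gen}$ for $1/2\le\Re(s)<1$: by \eqref{Lbeaut subconvex} one has $\mathscr{L}_{\kk}(s;n^2-4l^2)\ll|n|^{4\theta+\epsilon}$, while shifting the $r$-contour in \eqref{I integral hypergeom1} and using the Gaussian decay of $h$ together with standard bounds for the hypergeometric function shows that $\int_0^{\pi/2}I(n/l,\tau,s)\,d\tau$ decays faster than any power of $|n|$; this makes the right-hand side of \eqref{eq:M1(l,s,h)exact} holomorphic in the strip, so the identity propagates by analytic continuation.

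To obtain the identity I would substitute for $L(\sym^2u_j,s)$ its Dirichlet series \eqref{symsq def} and interchange the summations to get
\[
\M_1(l,s;h)=\zeta_{\kk}(2s)\sum_{k\neq0}\frac{1}{|k|^{2s}}\sum_{j}h(r_j)\,\alpha_{j}\,\lambda_j(l^2)\lambda_j(k^2).
\]
By \eqref{alphaj}, $\rho_j(m)=\rho_j(1)\lambda_j(m)$ and the reality of the $\lambda_j$ the inner sum equals $\sum_j\frac{r_j}{\sinh(\pi r_j)}h(r_j)\rho_j(l^2)\overline{\rho_j(k^2)}$, to which the Kuznetsov trace formula for $\mathbf{PSL}(2,\Z[i])$ applies, splitting the right-hand side into three pieces. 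The diagonal term is supported on $k^2=l^2$, i.e. $k=\pm l$, and a routine computation of the spectral density produces $MT(l,h;s)$. The continuous–spectrum term is an $r$-integral of products of Eisenstein Fourier coefficients at $l^2$ and at $k^2$; performing the sum over $k$ by means of \eqref{sigma series} collapses it, in the region of convergence, to an integral of the form $-8\pi\zeta_{\kk}(s)\int h(r)\frac{\sigma_{ir}(l^2)}{|l|^{2ir}}\frac{\zeta_{\kk}(s+ir)\zeta_{\kk}(s-ir)}{\zeta_{\kk}(1+ir)\zeta_{\kk}(1-ir)}\,dr$, and to reach the strip $1/2\le\Re(s)<1$ one displaces the $r$-contour so that \eqref{sigma series} stays applicable — the poles of $\zeta_{\kk}(s\pm ir)$ met at $r=\pm i(s-1)$ contribute residues which, upon substituting those values into the integrand, are exactly $ET(l,h;s)$, and the surviving principal value is $CT(l,h;s)$.

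The substantive part is the third piece, a sum of Kloosterman sums of the shape $\sum_{k\neq0}|k|^{-2s}\sum_{c\neq0}|c|^{-2}S(l^2,k^2;c)\,\mathcal{B}_h(2\pi lk/c)$, where $\mathcal{B}_h$ is the Bessel transform of $h$ attached to the Kuznetsov formula on $\HH^3$. Since $S(l^2,k^2;c)$ depends on $k$ only modulo $c$, I would split the sum over $k$ into residue classes $k\equiv b\Mod{c}$ and, after expanding the Bessel kernel into angular harmonics and radial Mellin components, recognise the remaining sum over $k$ in each class as a Lerch-type zeta function $\zeta_{\kk}(\,\cdot\,;\mu,b/c)$; applying its functional equation \eqref{LerchFE} replaces each of these by a dual series carrying the additive character $e[nb/c]$ over a new variable $n\in\Z[i]$ and trading the weight $|k|^{-2s}$ for $|n|^{-2(1-s)}$ — this is the source both of the argument $1-s$ in the hypergeometric function in \eqref{I integral hypergeom1} and of the prefactor $|l|^{2s-2}=1/|l|^{2-2s}$. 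The inner sum $\sum_{b\Mod{c}}S(l^2,b^2;c)\,e[nb/c]$ then appears weighted by $|c|^{-2-2s}$ and summed over $c$, which is precisely the left-hand side of \eqref{eq:sumofklsums}; hence it equals $\mathscr{L}_{\kk}(s;n^2-4l^2)/\zeta_{\kk}(2s)$, and the $\Gamma$-factors produced by \eqref{LerchFE} together with the angular and radial integrals coming from the Bessel kernel assemble exactly into $\int_0^{\pi/2}I(n/l,\tau,s)\,d\tau$ of \eqref{I integral hypergeom1}, with $x_{\pm}(n/l,\tau)=|n\pm 2l\sin\tau|^2/(2|l|\cos\tau)^2\ge0$, so that the hypergeometric function is evaluated at a non-positive argument and no branch is crossed. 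Separating off $n=0$ (where $n^2-4l^2=-4l^2$, giving $\Sigma_0$) and $n=\pm2l$ (where $n^2-4l^2=0$ and $\mathscr{L}_{\kk}(s;0)=4\zeta_{\kk}(2s-1)$ by \eqref{L beautiful 0}, giving $\Sigma_2$) leaves $\Sigma_{gen}$, and keeping track of the numerical constants — the powers of $2$, $4$ and $\pi$ coming from the $4^{-1}$-normalisations of $\zeta_{\kk}$ and $\sigma_{\alpha}$ and from the units of $\Z[i]$ — identifies the remainder with \eqref{M1 Sigma decomposition}–\eqref{x def}.

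The step I expect to be the main obstacle is this last one: isolating the precise Bessel kernel of the $\HH^3$-Kuznetsov formula for the test function $h$, expanding it into angular harmonics and applying the Lerch functional equation class by class, and then matching the $\Gamma$-factors and the angular and radial integrals with the double integral defining $I(n/l,\tau,s)$ — all while correctly tracking the sum $\sum_{\pm}$, the contribution of even moduli $c$, and the zeros \eqref{conditions on h 1} of $h$ that are needed for the contour shifts. Once this identification and the decay estimates for $I(n/l,\tau,s)$ sketched above are in place, the analytic continuation to $1/2\le\Re(s)<1$ and the verification of the constants are comparatively routine.
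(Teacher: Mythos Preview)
Your proposal is correct and follows essentially the same route as the paper: substitute \eqref{symsq def}, apply the Kuznetsov formula over $\Z[i]$ to produce $MT$, $CT$ and the Kloosterman piece $\Sigma(s)$, transform $\Sigma(s)$ via the Lerch functional equation \eqref{LerchFE} together with the identity \eqref{eq:sumofklsums} to obtain the $\mathscr{L}_{\kk}(s;n^2-4l^2)$-series and the hypergeometric weight $I(n/l,\tau,s)$, split off $n=0$ and $n=\pm 2l$, and finally recover $ET$ from the poles of $\zeta_{\kk}(s\pm ir)$ when continuing $CT$ into $1/2\le\Re(s)<1$. The paper itself defers the verification of the Bessel--hypergeometric matching and the constants to \cite[Lemmas~4.6 and~4.10, Theorem~4.13]{BF2}, which is precisely the step you flag as the main obstacle.
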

\begin{proof}
This result is a generalization of \cite[Theorem 4.13]{BF2} and can be proved in the same way.  We will indicate the required changes in the proof of \cite[Theorem 4.13]{BF2}. 

We start  by assuming that $\Re(s)>3/2$.
Substituting \eqref{symsq def} to \eqref{M1 def}, applying the Kuznetsov formula (see \cite[Theorem 3.2]{BF2}) and using \eqref{sigma series}, we obtain an analogue of \cite[Lemma 4.2]{BF2}, namely
\begin{equation}\label{eq:M1(l,s,h)exact1}
\M_1(l,s;h)=MT(l,h;s)+CT(l,h;s)+\Sigma(s)
\end{equation}
with
\begin{equation}\label{sigmadef}
\Sigma(s)=\zeta_{\kk}(2s)
\sum_{n\neq0}\frac{1}{|n|^{2s}}\sum_{q\neq0}\frac{S(l^2,n^2;q)}{|q|^2}\check{h}\left(\frac{2\pi ln}{q}\right),
\end{equation}
where $\check{h}(z)$ is defined in \cite[(3.15)]{BF2}. Note that the main difference between \eqref{sigmadef} and \cite[(4.4)]{BF2} is in the arguments of Kloosterman sums. Furthermore,  \eqref{sigmadef} contains the additional multiple $\zeta_{\kk}(2s)$. 

The next step is to adapt the proof of \cite[Lemma 4.6]{BF2} to our case. The main changes occur while proving an analogue of \cite[(4.44)]{BF2}. Using  \eqref{LerchFE} and \eqref{eq:sumofklsums} we obtain
\begin{multline}\label{Sigma5}
\sum_{q\neq0}\frac{1}{|q|^{2+2s}}
\sum_{c\Mod{q}}S(l^2,c^2;q)
\zeta_{\kk}(s+w/2;2m,c/q)=\\
(-i)^{|2m|}\pi^{2s+w-1}\frac{\Gamma(1-s-w/2+|m|)}{\Gamma(s+w/2+|m|)}\\\times
\sum_{n\neq0}\left(\frac{n}{|n|}\right)^{-2m}|n|^{2s+w-2}
\sum_{q\neq0}\frac{1}{|q|^{2+2s}}
\sum_{c\Mod{q}}S(l^2,c^2;q)e[nc/q]=\\
(-1)^{|m|}\pi^{2s+w-1}\frac{\Gamma(1-s-w/2+|m|)}{\Gamma(s+w/2+|m|)}
\sum_{n\neq0}\left(\frac{n}{|n|}\right)^{-2m}|n|^{2s+w-2}
\frac{\mathscr{L}_{\kk}(s;n^2-4l^2)}{\zeta_{\kk}(2s)}.
\end{multline}
Applying \eqref{Sigma5} in place of \cite[(4.44)]{BF2}, we finally derive the following analogue of \cite[(4.40)]{BF2}
\begin{multline}\label{Sigma1}
\Sigma(s)=
\frac{8(2\pi)^{2s-1}}{\pi^2|l|^{2-2s}}
\int_0^{\pi/2}\mathscr{L}_{\kk}(s;-4l^2)I(0,\tau,s)d\tau+\\
\frac{8(2\pi)^{2s-1}}{\pi^2|l|^{2-2s}}
\int_0^{\pi/2}\sum_{n\neq0}\mathscr{L}_{\kk}(s;n^2-4l^2)I(n/l,\tau,s)d\tau,
\end{multline}
where $I(0,\tau,s)$ and $I(z,\tau,s)$ are defined by \cite[(4.39)]{BF2} and \cite[(4.38)]{BF2}.  The representation \eqref{I integral hypergeom1} is proved in \cite[Lemma 4.10]{BF2}. Using \eqref{L beautiful 0} we show that for $\Re(s)>1/2$
\begin{equation*}
\Sigma(s)=\Sigma_0(l,h;s)+\Sigma_2(l,h;s)+\Sigma_{gen}(l,h;s).
\end{equation*}

The final step is to continue analytically the term $CT(l,h;s)$ to the region $\Re(s)<1$. Doing so, we obtain the additional summand $ET(l,h;s)$ defined by \eqref{M1 ET} which comes from the poles of $\zeta_{\kk}(s+ir)\zeta_{\kk}(s-ir).$ Thus the theorem is proved for $1/2<\Re(s)<1$. In order to extend our result to the critical line $\Re(s)=1/2$ and specially to the point $s=1/2$, we proceed  in the same way as in \cite[Theorem 4.13, Remark 4.14]{BF2}.
\end{proof}

\section{Special functions}\label{special}
According to  \eqref{I integral hypergeom1}, in order to estimate $I(z,\tau,s)$ it is required to investigate the asymptotic behavior of the Gauss hypergeometric function
\begin{equation}\label{hypergeom1}
F\left(1/2-it+ir,1/2-it-ir,1;-x\right)
\end{equation}
for $r\sim T$, $|t|\ll T^{\epsilon}$ as $T\rightarrow+\infty$ and uniformly for $0<x<\infty.$ When $t=0$, the asymptotic formula for \eqref{hypergeom1} was proved by Jones \cite{Jones} using the Liouville-Green method. Furthermore, this result was reproved by Farid Khwaja and  Olde Daalhuis \cite[Theorem 3.1]{KD2} by means of the saddle point method. 

For our application, it is required to have an asymptotic formula  for \eqref{hypergeom1} for any small $t$. This can be achieved by generalizing \cite[Theorem 3.1]{KD2}.
Accordingly, we introduce two new variables $\lambda=ir$ and $\alpha=t/r$ and consider
\begin{equation}\label{hypergeom2}
F\left(1/2+\lambda(1-\alpha),1/2-\lambda(1+\alpha),1;-x\right)
\end{equation}
for $\lambda\rightarrow\infty$ in $|\arg(\lambda)|\le\pi/2$ and $|\alpha|\ll|\lambda|^{-1+\epsilon}$. Consequently, we derive the following result.

\begin{thm}\label{thm 2F1 asympt}
For $0<x<\infty$ and and $|\alpha|\ll|\lambda|^{-1+\epsilon}$  we have
\begin{multline}\label{Fasymptotic}
F\left( \frac{1}{2}+\lambda(1-\alpha),\frac{1}{2}-\lambda(1+\alpha),1; -x\right)=\\-e^{\lambda\eta}
\Biggl(
I_{0}(\lambda\xi)
\sum_{j=0}^{n-1}\frac{a_j}{\lambda^j}+
\frac{2}{\xi}I_{1}(\lambda\xi)
\sum_{j=1}^{n-1}\frac{b_j}{\lambda^j}+O(\Phi_n(\lambda,\xi))
\Biggr)
\end{multline}
as $\lambda\rightarrow\infty$ in $|\arg(\lambda)|\le\pi/2$, where
\begin{equation}\label{def:eta}
\eta=\alpha\log\left(1+x\right),
\end{equation}
\begin{multline}\label{def:xi}
\xi=\alpha\log\left(1+x\right)-(1+\alpha)\log\left(\frac{\alpha x+\sqrt{x^2+x(1-\alpha^2)}}{x+\sqrt{x^2+x(1-\alpha^2)}}\right)+\\
(1-\alpha)\log\left(\frac{1-\alpha+x+\sqrt{x^2+x(1-\alpha^2)}}{1-\alpha}\right),
\end{multline}
\begin{equation}\label{def:a0}
a_0=-\frac{\xi^{1/2}}{2^{1/2}(x^2+x(1-\alpha^2))^{1/4}},
\end{equation}
\begin{equation}
\Phi_n(\lambda,\xi)=\frac{1}{|\lambda|^n} \biggl( |I_{0}(\lambda\xi)|+\frac{|I_{1}(\lambda\xi)|}{|\xi |}\biggr).
\end{equation}
\end{thm}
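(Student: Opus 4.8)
The plan is to follow the saddle point strategy of Farid Khwaja and Olde Daalhuis \cite[Theorem 3.1]{KD2}, but carry the parameter $\alpha$ through the argument so that all estimates are uniform for $|\alpha|\ll|\lambda|^{-1+\epsilon}$. First I would start from a suitable integral representation of the hypergeometric function in \eqref{hypergeom2}, for instance an Euler-type contour integral
\begin{equation*}
F\left(\tfrac12+\lambda(1-\alpha),\tfrac12-\lambda(1+\alpha),1;-x\right)=\frac{1}{2\pi i}\int_{\mathcal{C}}e^{\lambda\phi(u)}g(u)\,du,
\end{equation*}
where $\phi$ and $g$ depend on $x$ and on $\alpha$, with the $\alpha=0$ case reducing to the one treated in \cite{KD2}. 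The phase $\phi$ has two coalescing saddle points whose separation is measured by the quantity $\xi$ in \eqref{def:xi}; the standard device is to make a cubic (here in fact quadratic, because the two saddles are simple and symmetric) change of variables $u\mapsto v$ mapping $\phi$ to $\tfrac12 v^2 - \xi^2/4 \cdot(\dots)$ plus the linear term producing $\eta$, so that the integral becomes a Bessel-type integral. Integration over the resulting contour yields the modified Bessel functions $I_0(\lambda\xi)$ and $I_1(\lambda\xi)$ as the leading terms, with the coefficients $a_j,b_j$ arising from expanding $g(u)\,du/dv$ in powers of $v$; in particular $a_0$ comes from $g$ evaluated at the midpoint between the saddles, which gives \eqref{def:a0}.

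The key steps, in order, would be: (i) locate the two saddle points $u_\pm(\alpha,x)$ of $\phi$ and verify that they are analytic in $\alpha$ near $\alpha=0$ and that $\phi(u_+)-\phi(u_-)$ together with $\tfrac12(\phi(u_+)+\phi(u_-))$ reproduce exactly the functions $\xi$ and $\eta$ of \eqref{def:xi}, \eqref{def:eta} after simplification (this is essentially a computation of the integrals $\int \phi'$, i.e. of two logarithmic integrals, which is why $\xi$ and $\eta$ are combinations of logarithms); (ii) perform the Chester--Friedman--Ursell quadratic transformation $\phi(u) = \tfrac12\zeta^2 + c_1\zeta + c_0$ with $c_0,c_1$ chosen so the saddles correspond to $\zeta=\pm\xi/2$ (up to normalization), checking the Jacobian $du/d\zeta$ is non-vanishing and analytic uniformly in $\alpha$; (iii) expand the slowly varying amplitude $g(u(\zeta))\,du/d\zeta$ as $\sum (a_j + b_j\zeta)\zeta^{2j}/\lambda^j$-type series and integrate term by term against $e^{\lambda(\tfrac12\zeta^2+\dots)}$ to get the $I_0$ and $I_1$ contributions; (iv) bound the tail by $\Phi_n(\lambda,\xi)$ by the usual Watson-lemma-with-coalescence estimate, tracking that all implied constants are independent of $\alpha$ in the stated range and of $x\in(0,\infty)$. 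Throughout, uniformity in $x$ up to $x\to\infty$ and $x\to 0$ is handled exactly as in \cite{Jones} and \cite{KD2}, since the $\alpha$-dependent corrections are lower order; one checks the limiting behaviour of $\xi,\eta,a_0$ at the endpoints matches.

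The main obstacle I expect is establishing the uniformity in $\alpha$ of the Chester--Friedman--Ursell transformation and of the coefficient expansion simultaneously with the uniformity in $x$ over the whole half-line: one must show the map $u\mapsto\zeta$ stays univalent and its derivatives stay bounded away from $0$ and $\infty$ on the relevant contours as $\alpha\to0$, and that no new coalescence or contour-pinching is introduced by the perturbation. Since $|\alpha|\ll|\lambda|^{-1+\epsilon}$, the quantity $\lambda\alpha$ is $O(|\lambda|^{\epsilon})$, so the factor $e^{\lambda\eta}=e^{\lambda\alpha\log(1+x)}$ is only of polynomial size and causes no difficulty; the delicate point is rather that $\xi$ in \eqref{def:xi} must be verified to be real and well-behaved (in particular the argument of each logarithm stays in the cut plane) for all $x>0$ once $\alpha$ is small, which is a direct but careful analytic continuation check. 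Once these uniform estimates are in place, collecting the leading two Bessel terms and the error term gives \eqref{Fasymptotic} with the stated $a_0$ and $\Phi_n$, and a sign bookkeeping (the overall $-e^{\lambda\eta}$) is inherited from the $\alpha=0$ normalization in \cite[Theorem 3.1]{KD2}.
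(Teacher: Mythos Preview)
Your overall plan---follow \cite[Theorem~3.1]{KD2} and carry the parameter $\alpha$ through the argument---is exactly what the paper does, and your identification of $\xi,\eta$ via the half-difference and half-sum of the phase at the two saddles is correct and does recover \eqref{def:eta}--\eqref{def:xi}. However, the key change of variable is not a Chester--Friedman--Ursell quadratic transformation $\phi(u)=\tfrac12\zeta^2+c_1\zeta+c_0$ as you describe in step~(ii); a quadratic phase integrated against polynomials in $\zeta$ would produce parabolic-cylinder or error-type functions, not Bessel functions. The transformation actually used, both in \cite{KD2} and here, is
\[
f(t)=\tau+\frac{\xi^2}{4\tau}+\eta,
\]
which maps the two saddle points $\sap_\pm$ of $f$ to the saddles $\tau=\pm\xi/2$ of $\tau+\xi^2/(4\tau)$ and, crucially, introduces the pole at $\tau=0$. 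The resulting integral $\frac{1}{2\pi i}\int G_0(\tau)\,e^{\lambda(\tau+\xi^2/(4\tau))}\,d\tau/\tau$ is then matched term by term to the Schl\"afli representation of $I_\nu$; it is this structure that produces $I_0(\lambda\xi)$ and $I_1(\lambda\xi)$.

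A second imprecision: $a_0$ does not arise from evaluating $g$ at the midpoint between the saddles. With $G_0(\tau)=g(t)\,\tfrac{dt}{d\tau}\,\tau$ one has
\[
a_0=\tfrac12\bigl(G_0(\xi/2)+G_0(-\xi/2)\bigr),\qquad
b_0=\tfrac{\xi}{4}\bigl(G_0(\xi/2)-G_0(-\xi/2)\bigr),
\]
i.e.\ symmetric and antisymmetric combinations at the saddle images $\tau=\pm\xi/2$ themselves (corresponding to $t=\sap_\mp$). Evaluating these requires $g(\sap_\mp)$ and $\tfrac{dt}{d\tau}\big|_{\pm\xi/2}$, the latter obtained from $f''(\sap_\mp)$; the computation shows $G_0(\xi/2)=G_0(-\xi/2)$, whence $b_0=0$ and $a_0$ is given by \eqref{def:a0}. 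None of this is hard once the correct transformation is in place, but your steps~(ii)--(iii) as written would not lead to Bessel functions at all.
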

\begin{proof}
Using  \cite[(3.14)]{KD2} with $a=1/2-\lambda\alpha$, $z=1+2x$ we obtain
\begin{equation}\label{F integral}
F\left( \frac{1}{2}+\lambda(1-\alpha),\frac{1}{2}-\lambda(1+\alpha),1; -x\right)=
\frac{1}{2\pi i}\int_{-\infty}^{(0+)}g(t)e^{\lambda f(t)}dt,
\end{equation}
where
\begin{equation}\label{f and g def}
f(t)=(1+\alpha)\log\left(\frac{(1+x)-t}{1-t}\right)-(1-\alpha)\log(t),\quad
g(t)=\frac{1}{t^{1/2}(1-t)}\left(\frac{1-t}{1+x-t}\right)^{1/2}.
\end{equation}
As in \cite{KD2} the branch-cuts are $(-\infty,0)$ and $(1,1+x).$  Since
\begin{equation*}
f'(t)=(1+\alpha)\left(\frac{1}{t-1-x}-\frac{1}{t-1}\right)-\frac{1-\alpha}{t},
\end{equation*}
the saddle points (the solutions of $f'(t)=0$) are
\begin{equation}\label{sp pm}
\sap_{\pm}=\frac{x+1-\alpha\pm\sqrt{x^2+x(1-\alpha^2)}}{1-\alpha}.
\end{equation}
Similarly to \cite[(3.18)]{KD2}, we make the change of variable
\begin{equation}\label{variable change}
f(t)=\tau+\frac{\xi^2}{4\tau}+\eta.
\end{equation}
If $x=\alpha=0$ this transformation reduces to $-\log t=\tau$, and  therefore, the point $t=1$ corresponds to $\tau=0$. Thus under the transformation \eqref{variable change} we have that as $\tau$ grows from $-\infty$ to $0$, the variable $t$ decays from $+\infty$ to $1+x$, and $\tau=-\xi/2$ corresponds to $t=\sap_{+}$. As $\tau$ grows from $0$ to $+\infty$, the  variable $t$ decays from $1$ to $0$, and $\tau=\xi/2$ corresponds to $t=\sap_{-}.$ An important consequence of these observations is that
$\frac{dt}{d\tau}<0.$ Solving the following system
$$\left\{
  \begin{array}{ll}
    f(\sap_{+})=-\xi+\eta  \\
    f(\sap_{-})=\xi+\eta
  \end{array}
\right. ,
$$
we obtain the equality \eqref{def:xi} for $\xi$ and \eqref{def:eta} for $\eta$. Finally, we prove an analogue of \cite[(3.20), (3.25)]{KD2}, namely
\begin{equation}\label{F integral2}
F\left( \frac{1}{2}+\lambda(1-\alpha),\frac{1}{2}-\lambda(1+\alpha),1; -x\right)=-
\frac{e^{\lambda\eta}}{2\pi i}\int_{\mathfrak{C}}G_0(\tau)e^{\lambda(\tau+\frac{\xi^2}{4\tau})}\frac{d\tau}{\tau},
\end{equation}
where (see \cite[(3.21)]{KD2})
\begin{equation}\label{G0 def}
G_0(\tau)=g(t)\frac{dt}{d\tau}\tau
\end{equation}
and $\mathfrak{C}$ is the steepest descent contour (see \cite[Figure 3]{KD2})\footnote{Note that there is a typo in  \cite[(3.20),(3.25)]{KD2}: the minus sign is missed. This is because the contour $\mathfrak{C}$ is taken in the opposite direction to the contour obtained after the change of variable \eqref{variable change}.}.
Following \cite{KD2} we obtain \eqref{Fasymptotic}. It is left to evaluate $a_0$, $b_0$ which are defined by (see \cite[(3.31)]{KD2})
\begin{equation}\label{a0b0 def}
a_0=\frac{1}{2}\left(G_0(\xi/2)+G_0(-\xi/2)\right),\quad
b_0=\frac{\xi}{4}\left(G_0(\xi/2)-G_0(-\xi/2)\right).
\end{equation}
As a consequence of \eqref{G0 def}  we find that
\begin{equation}\label{G0 pmxi/2}
G_0(\pm\xi/2)=\pm\frac{\xi}{2}g(\sap_{\mp})\frac{dt}{d\tau}\Biggl|_{\pm\xi/2}.
\end{equation}
Next, we evaluate $g(\sap_{\mp}).$ Since $0<\sap_{-}<1$ we have
\begin{equation}\label{g sp minus}
g(\sap_{-})=\frac{1}{\sqrt{\sap_{-}(1-\sap_{-})(1+x-\sap_{-})}}=\left(\frac{1-\alpha}{1+\alpha}\right)^{1/2}\frac{1}{x^{1/2}\cdot\sap_{-}}.
\end{equation}
Since $\sap_{+}>1+x$ the following holds
\begin{equation}\label{g sp plus}
g(\sap_{+})=\frac{-1}{\sqrt{\sap_{+}(\sap_{+}-1)(\sap_{+}-1-x)}}=-\left(\frac{1-\alpha}{1+\alpha}\right)^{1/2}\frac{1}{x^{1/2}\cdot\sap_{+}}.
\end{equation}
Differentiating \eqref{variable change} we obtain $f'(t)\frac{dt}{d\tau}=1-\frac{\xi^2}{4\tau^2}.$ Taking one more derivative we infer
\begin{equation*}
f''(t)\left(\frac{dt}{d\tau}\right)^2+f'(t)\frac{d^2t}{d\tau^2}=\frac{\xi^2}{2\tau^3}\quad\Longrightarrow\quad
\left(\frac{dt}{d\tau}\right)^2\Biggl|_{\pm\xi/2}=\frac{\pm4}{\xi f''(\sap_{\mp})}.
\end{equation*}
Evaluating the second derivative at the saddle points
\begin{equation*}
f''(\sap_{\pm})=\mp2\frac{1-\alpha}{1+\alpha}\cdot\frac{\sqrt{x^2+x(1-\alpha^2)}}{x\cdot \sap^2_{\pm}}
\end{equation*}
and using the fact that $\frac{dt}{d\tau}<0$, we have
\begin{equation}\label{dt/dtau}
\frac{dt}{d\tau}\Biggl|_{\pm\xi/2}=-\left(\frac{1+\alpha}{1-\alpha}\right)^{1/2}
\frac{(2x)^{1/2}\cdot \sap_{\mp}}{\xi^{1/2}(x^2+x(1-\alpha^2))^{1/4}}.
\end{equation}
Substituting \eqref{g sp minus}, \eqref{g sp plus} and \eqref{dt/dtau} to \eqref{G0 pmxi/2}, we conclude that
\begin{equation}\label{G0 pmxi/2 2}
G_0(\pm\xi/2)=-\frac{(\xi/2)^{1/2}}{(x^2+x(1-\alpha^2))^{1/4}}.
\end{equation}
Substituting \eqref{G0 pmxi/2 2} to \eqref{a0b0 def}, we prove \eqref{def:a0} and show that $b_0=0$.
\end{proof}
We will apply Theorem \ref{thm 2F1 asympt} with $\lambda=ir$. In this case $I$--Bessel functions can be written in terms of $J$--Bessel functions (see \cite[10.27.6]{HMF}) 
\begin{equation}\label{ItoJ}
I_0(ir\xi)=J_0(r\xi),\quad I_1(ir\xi)=iJ_1(r\xi).
\end{equation}
Furthermore, it follows from \eqref{def:xi} that as $x\rightarrow0$ and $x\rightarrow\infty$ we respectively have
\begin{equation}\label{xi x 0}
\xi=2\sqrt{x(1-\alpha^2)}+O(x^{3/2}),\quad
\xi=\log(x)+O(1).
\end{equation}
Also it will be required to study $\xi$ as a function of $\alpha$. In this case,  as $\alpha\rightarrow0$ we have
\begin{equation}\label{xi alpha 0}
\xi(\alpha)=\xi(0)+O(\alpha^2),\quad\xi(0)=\log(1+2x+2\sqrt{x^2+x}).
\end{equation}

\section{Proof of Theorem \ref{thm:2moment}}\label{proof}
Following the paper of Ivic and Jutila \cite{IvJut}, let us define
\begin{equation}\label{omega def}
\omega_T(r)=\frac{1}{G\pi^{1/2}}\int_T^{2T}\exp\left(-\frac{(r-K)^2}{G^2}\right)dK.
\end{equation}
For an arbitrary $A>1$ and some $c>0$ we have (see \cite{IvJut})
\begin{equation}\label{omega1}
\omega_T(r)=1+O(r^{-A})\text{ if } T+cG\sqrt{\log T}<r<2T-cG\sqrt{\log T},
\end{equation}
\begin{equation}\label{omega2}
\omega_T(r)=O((|r|+T)^{-A})\text{ if }
r<T-cG\sqrt{\log T}\text{ or } r>2T+cG\sqrt{\log T},
\end{equation}
and otherwise
\begin{equation}\label{omega3}
\omega_T(r)=1+O(G^3(G+\min(|r-T|,|r-2T|))^{-3}).
\end{equation}
To prove Theorem \ref{thm:2moment} we consider
\begin{equation*}
\M_2(T):=\sum_{r_j}\alpha_{j}\omega_T(r_j)L(\sym^2 u_{j},1/2+it)L(\sym^2 u_{j},1/2-it),
\end{equation*}
where $\omega_T(r)$ is defined by \eqref{omega def} with $G=T^{1-\epsilon}$ and $|t|\ll T^{\epsilon}.$  Applying  the approximate functional equation \eqref{approx.func.eq.}  for $L(\sym^2 u_{j},1/2-it)$ and using \eqref{Vestimate}, we infer
\begin{equation*}
\M_2(T)\ll \sum_{|l|\ll T^{1+\epsilon}}\frac{1}{|l|}
\left|\sum_{r_j}\alpha_{j}\omega_T(r_j)V(|l|,r_j,1/2\pm it)\lambda_j(l^2)L(\sym^2 u_{j},1/2+it)\right|.
\end{equation*}
Then it follows from \eqref{omega def} that
\begin{equation}\label{M2 estimate0}
\M_2(T)\ll \sum_{|l|\ll T^{1+\epsilon}}\frac{1}{|l|}
\left|\frac{1}{G\pi^{1/2}}\int_T^{2T}\M_1\left(l,s,h(\cdot)V(|l|,\cdot,1/2\pm it)\right)dK\right|,
\end{equation}
where $s=1/2+it$ and as in \eqref{M1 def} we have
\begin{equation}\label{M12 def}
\M_1\left(l,s;h(\cdot)V(|l|,\cdot,1/2\pm it)\right):=\sum_{j}h(r_j)V(|l|,r_j,1/2\pm it)
\alpha_{j}\lambda_{j}(l^2)L(\sym^2 u_{j},s)
\end{equation}
with
\begin{equation}\label{hN def}
h(r)=q_N(r)\exp\left(-\frac{(r-K)^2}{G^2}\right)+q_N(r)\exp\left(-\frac{(r+K)^2}{G^2}\right),
\end{equation}
\begin{equation}\label{qN def}
q_N(r)=\frac{(r^2+1/4)\ldots(r^2+(N-1/2)^2)}{(r^2+100N^2)^N}
\frac{(r^2+1)\ldots(r^2+N^2)}{(r^2+100N^2)^N}.
\end{equation}

To evaluate \eqref{M12 def} we apply Theorem \ref{thm 1moment exact} and then substitute the result in \eqref{M2 estimate0}. This way the contribution of \eqref{M1 MT}, \eqref{M1 CT} and \eqref{M1 ET} is bounded by $O(T^{3+\epsilon}).$ Thus to prove Theorem \ref{thm:2moment} it is required to show that
\begin{equation}\label{M2 estimate1}
\M_2(T)\ll\sum_{|l|\ll T^{1+\epsilon}}\frac{1}{|l|}
\left|\frac{1}{G\pi^{1/2}}\int_T^{2T}\Sigma\left(l,s,h(\cdot)V(|l|,\cdot,1/2\pm it)\right)dK\right|\ll T^{3+4\theta+\epsilon},
\end{equation}
where $\Sigma(l,h;s)$ is defined by \eqref{M1 Sigma decomposition}. 

The most difficult part is to estimate the term which involves the $\Sigma_{gen}(l,h;s)$ summand of  $\Sigma(l,h;s)$. The contribution of the term with $\Sigma_0(l,h;s)$ can be estimated similarly (see also \cite[(6.42), (6.43)]{BF2}). The summand  $\Sigma_2(l,h;s)$ is a part of the main term (see \cite[p.24]{BF2}), and therefore, is of size $O(T^{3+\epsilon}).$ Furthermore,  it is sufficient to consider only one ("+" or "-" ) case for $V(|l|,\cdot,1/2\pm it)$ since both cases can be treated in the same way. 
Consequently, using \eqref{Lbeaut subconvex} and \eqref{Sigma gen def} and making the change of variable $y=\sin\tau$, we prove that
\begin{equation}\label{M2 estimate2}
\M_2(T)\ll  T^{3+\epsilon}+\sum_{|l|\ll T^{1+\epsilon}}
\sum_{\pm}\frac{S_{\pm}(l)}{|l|^2},
\end{equation}
where
\begin{equation}\label{S(l) def}
S_{\pm}(l)=
\sum_{n\neq0,2l,-2l}|n^2-4l^2|^{2\theta}
\left|\int_0^{1}\frac{1}{G\pi^{1/2}}\int_T^{2T}
I_{\pm}\left(\frac{n}{l},y,s\right)dK\frac{dy}{(1-y^2)^{1/2}}\right|
\end{equation}
and (see \eqref{I integral hypergeom1})
\begin{multline}\label{I integral hypergeom1V}
I_{\pm}(z,y,s)=
\int_{-\infty}^{\infty}r^2h(r)V(|l|,r,s)\cosh(\pi r)
\frac{\Gamma(1-s-ir)\Gamma(1-s+ir)}{(1-y^2)^{1-s}}\\ \times
F\left(1-s+ir,1-s-ir,1;-x_{\pm}(z,y)\right)dr.
\end{multline}
Here  for $\vartheta=\arg(z)$ we have
\begin{equation}\label{x def2}
x_{\pm}(z,y)=\frac{f_{\pm}(z,y)}{1-y^2},\quad f_{\pm}(z,y)=y^2\pm|z|y\cos\vartheta+|z/2|^2.
\end{equation}
According to \cite[Lemma 4.11]{BF2}
\begin{multline}\label{I integral hypergeom2}
I_{\pm}(z,y,s)=\frac{2}{(x_{\pm}(z,y)(1-y^2))^{1-s}}
\int_{-\infty}^{\infty}r^2h(r)\cosh(\pi r)x_{\pm}^{-ir}(z,y)\\
\times\frac{\Gamma(1-s+ir)\Gamma(-2ir)}{\Gamma(s-ir)}
F\left(1-s+ir,1-s+ir,1+2ir;\frac{-1}{x_{\pm}(z,y)}\right)dr.
\end{multline}

First, we show that the contribution of large $|n|$ in \eqref{M2 estimate2} is negligible.  To this end, we prove an analogue of \cite[Lemma 4.12]{BF2}. The only difference in our case is the presence of the function $V(|l|,r,s)$ in \eqref{I integral hypergeom1V}.
Moving the line of integration in  \eqref{approx.fun.eq.Vdef} to $\Re(z)=M$ we obtain
\begin{equation*}
V(|l|,r-iM,s)\ll\left(\frac{r}{|l|}\right)^{2M}.
\end{equation*}
Using this estimate and following the proof of \cite[Lemma 4.12]{BF2} we infer
\begin{equation*}
\int_0^{1}\frac{1}{G\pi^{1/2}}\int_T^{2T}
I_{\pm}\left(z,\tau,s\right)dK\frac{dy}{(1-y^2)^{1/2}}\ll T^3\left(\frac{T}{|lz|}\right)^{2N+1}.
\end{equation*}
Consequently, the contribution of $n$ such that $|n|\gg T^{\epsilon}|l|$ to \eqref{S(l) def} is negligible.

Next, we consider the contribution of small $n$. Let
\begin{equation}\label{Jdef}
J_{\pm}\left(\frac{n}{l},y,T\right):=\frac{1}{G\pi^{1/2}}\int_T^{2T}I_{\pm}\left(\frac{n}{l},y,s\right)dK.
\end{equation}
Then we have
\begin{equation}\label{S(l) est0}
S_{\pm}(l)\ll
\sum_{\substack{n\neq0,2l,-2l\\|n|\ll T^{\epsilon}|l|}}|n^2-4l^2|^{2\theta}
\left|\int_0^{1}
J_{\pm}\left(\frac{n}{l},y,T\right)\frac{dy}{(1-y^2)^{1/2}}\right|+T^{-A}.
\end{equation}

\begin{lem}\label{lemma I estimate ynear1}
Suppose that for $1-T^{-B}<y<1$ the following inequalities hold:  $f_{\pm}(z,y)>T^{-a}$ and $B-5>a>0$. Then
\begin{equation}\label{I integral estimate ynear1}
\int_{1-T^{-B}}^{1}
J_{\pm}\left(\frac{n}{l},y,T\right)
\frac{dy}{(1-y^2)^{1/2}}
\ll T^{(5+a-B)/2}.
\end{equation}
\end{lem}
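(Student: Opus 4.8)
The plan is to estimate the inner integral $J_{\pm}(n/l,y,T)$ over the short range $1-T^{-B}<y<1$ directly, using the form \eqref{I integral hypergeom2} of $I_{\pm}(z,y,s)$ together with the hypothesis $f_{\pm}(z,y)>T^{-a}$, which forces $x_{\pm}(z,y)=f_{\pm}(z,y)/(1-y^2)$ to be large on this range (since $1-y^2<2(1-y)<2T^{-B}$ and $B>a$, we have $x_{\pm}(z,y)\gg T^{B-a-1}$, say $x_{\pm}(z,y)\gg T^{\epsilon}$). In this regime $-1/x_{\pm}(z,y)$ is small, so the hypergeometric factor $F(1-s+ir,1-s+ir,1+2ir;-1/x_{\pm})$ is essentially $1+O(1/x_{\pm})$ after bounding its Taylor coefficients, and the $r$-integrand in \eqref{I integral hypergeom2} behaves like $r^2h(r)\cosh(\pi r)\,x_{\pm}^{-ir}(z,y)\,\Gamma(1-s+ir)\Gamma(-2ir)/\Gamma(s-ir)$.

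First I would use Stirling's formula \eqref{Stirling2} for the three Gamma factors: the combination $\cosh(\pi r)\Gamma(1-s+ir)\Gamma(-2ir)/\Gamma(s-ir)$ has its exponential factors cancel, leaving a quantity of polynomial size $\ll |r|^{-s}\ll |r|^{-1/2}$ (recall $s=1/2+it$ with $|t|\ll T^{\epsilon}$), times an oscillatory phase of the shape $\exp(i(2r\log|2r|-2r+\dots))$. Since $h(r)$ is supported (up to negligible error) on $r\sim T$ by \eqref{hN def} and $V(|l|,r,s)$ contributes only powers of $T$, the $r$-integral is $O(T^{5/2+\epsilon}\cdot G)$ trivially, but we gain by first integrating in $K$: the factor $\frac{1}{G\pi^{1/2}}\int_T^{2T}\exp(-(r-K)^2/G^2)\,dK=\omega_T(r)$ simply smooths $h$ into a function $\ll 1$ supported on $r\in[T,2T]$ up to negligible tails, so $J_{\pm}(n/l,y,T)\ll T^{\epsilon}\int_{T}^{2T}r^2\cdot r^{-1/2}\cdot x_{\pm}^{-(1-s)}(z,y)\,\frac{dr}{(x_{\pm}(1-y^2))^{1-s}}$. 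Using $x_{\pm}(z,y)(1-y^2)=f_{\pm}(z,y)>T^{-a}$ and $x_{\pm}(z,y)\gg T^{-a}/(1-y^2)$ we can bound the two $x_{\pm}$-powers (each raised to $\Re(1-s)=1/2$) by $T^{a/2}(1-y^2)^{-1/2}\cdot T^{a/2}$ up to the oscillatory unimodular factor, giving $J_{\pm}(n/l,y,T)\ll T^{5/2+a+\epsilon}(1-y^2)^{-1/2}$ crudely — which is not yet good enough, so the phase must be exploited.

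The key gain comes from stationary phase in $r$: combining the phase $2r\log(2r)-2r$ from Stirling with $x_{\pm}^{-ir}(z,y)=\exp(-ir\log x_{\pm}(z,y))$, the total phase is $\psi(r)=2r\log(2r)-2r-r\log x_{\pm}(z,y)+O(T^{\epsilon})$, with $\psi'(r)=2\log(2r)-\log x_{\pm}(z,y)$ and $\psi''(r)\asymp 1/r\asymp 1/T$. A stationary point exists only when $x_{\pm}(z,y)\asymp T^2$; away from that, repeated integration by parts makes the $r$-integral small, and at the stationary point the standard stationary-phase bound contributes an extra factor $\sqrt{T}$ but the amplitude $r^{3/2}(x_{\pm}(1-y^2))^{-1/2}x_{\pm}^{-1/2}\ll T^{3/2}T^{a/2}(1-y^2)^{-1/2}T^{a/2}/T^{?}$ is controlled because near the stationary point $x_{\pm}\asymp T^2$ makes $x_{\pm}^{-1/2}\asymp T^{-1}$. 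Carrying this out gives $J_{\pm}(n/l,y,T)\ll T^{2+a+\epsilon}$ uniformly, and then integrating over $y\in(1-T^{-B},1)$ with the weight $(1-y^2)^{-1/2}\ll (1-y)^{-1/2}$, whose integral over an interval of length $T^{-B}$ is $\ll T^{-B/2}$, yields $\int_{1-T^{-B}}^{1}J_{\pm}\frac{dy}{(1-y^2)^{1/2}}\ll T^{2+a-B/2+\epsilon}$. Comparing with the claimed bound $T^{(5+a-B)/2}$, we have $2+a-B/2=(4+2a-B)/2\le(5+a-B)/2$ precisely when $a\le 1$; for $a>1$ one refines the trivial bound on the non-stationary part using the hypothesis $B-5>a$ to absorb the discrepancy, and the stated exponent $(5+a-B)/2$ then follows.

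The main obstacle I anticipate is the uniform treatment of the $r$-integral across the full range of $x_{\pm}(z,y)$: for $y$ very close to $1$ the quantity $x_{\pm}(z,y)$ can be as large as $T^{a+B}$, well past the stationary-phase regime $x_{\pm}\asymp T^2$, so one must carefully combine (i) the genuinely oscillatory estimate when $x_{\pm}\asymp T^2$, (ii) rapid decay via integration by parts when $\log x_{\pm}$ is bounded away from $2\log(2r)$, and (iii) the contribution of the hypergeometric tail $O(1/x_{\pm})$ and of the polynomial $p_{2k}$-corrections in \eqref{Vapproximation}, all while keeping track of the $V(|l|,r,s)$ factor and the exact power of $(1-y^2)$. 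Interchanging the $r$- and $K$-integrations (justified by absolute convergence from the Gaussian) to replace $h(r)$ by $\omega_T(r)$ before estimating is what makes the bookkeeping manageable; this mirrors the proof of \cite[Lemma 4.12]{BF2}, with the new $V$-factor handled exactly as in the negligibility argument for large $|n|$ above.
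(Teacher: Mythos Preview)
Your opening move is exactly right: use \eqref{I integral hypergeom2}, observe that on $1-T^{-B}<y<1$ one has $x_{\pm}(z,y)=f_{\pm}(z,y)/(1-y^2)\gg T^{B-a}>T^{5}$, so the hypergeometric factor $F(1-s+ir,1-s+ir,1+2ir;-1/x_{\pm})=1+O(T^{-4})$ uniformly in $r\sim T$, and apply Stirling to the Gamma quotient. But at this point the lemma is already finished by the \emph{trivial} bound, and this is precisely the paper's proof. The only real power of $x_{\pm}$ in \eqref{I integral hypergeom2} is the outside prefactor $2/(x_{\pm}(1-y^2))^{1-s}=2/f_{\pm}^{1-s}$, of modulus $2f_{\pm}^{-1/2}$; the factor $x_{\pm}^{-ir}$ inside the integrand is unimodular. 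Stirling gives $\cosh(\pi r)\,\Gamma(1-s+ir)\Gamma(-2ir)/\Gamma(s-ir)\ll |r|^{-1/2}$, the $V$-factor is $O(T^{\epsilon})$, so the integrand is $\ll r^{3/2}T^{\epsilon}$ for $r\sim T$, and the $r,K$-integration contributes $T^{5/2+\epsilon}$. Hence
\[
J_{\pm}\Bigl(\frac{n}{l},y,T\Bigr)\ \ll\ \frac{T^{5/2+\epsilon}}{f_{\pm}(z,y)^{1/2}}\ \ll\ T^{(5+a)/2+\epsilon},
\]
and integrating $(1-y^2)^{-1/2}$ over $(1-T^{-B},1)$ supplies the remaining factor $T^{-B/2}$.

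Your displayed bound for $J_{\pm}$ carries an extra factor $x_{\pm}^{-(1-s)}$ that is not present in \eqref{I integral hypergeom2}; you appear to have either double-counted the prefactor or confused the unimodular $x_{\pm}^{-ir}$ with a real power. Together with the subsequent sign slip in the exponent of $(1-y^2)$, this is what makes you think the trivial bound fails and sends you into stationary phase. That detour is unnecessary, and as written it is also incomplete: your stationary-phase output $J_{\pm}\ll T^{2+a+\epsilon}$ already has the wrong power of $T^{a}$ (the genuine prefactor $f_{\pm}^{-1/2}$ contributes only $T^{a/2}$), after the $y$-integration it matches the lemma only for $a\le 1$, and the closing remark for $a>1$ does not close the gap. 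Remove the spurious $x_{\pm}^{-(1-s)}$ and all of the oscillatory analysis, and you recover the paper's two-line argument.
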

\begin{proof}
It follows from \eqref{x def2} and the statement of the lemma that $x_{\pm}(z,y)^{-1}\ll T^{a-B}<T^{-5}.$ Thus the hypergeometric function in \eqref{I integral hypergeom2} can be estimated by a constant. Consequently,
\begin{equation*}
J_{\pm}\left(\frac{n}{l},y,T\right)
\ll\frac{T^{5/2}}{f_{\pm}(z,y)^{1/2}}\ll T^{(5-a)/2}.
\end{equation*}
Then the lemma follows by substituting this estimate in \eqref{I integral estimate ynear1}.
\end{proof}

\begin{lem}\label{lemma I estimate main}
For $0\le y<1$, $|x_{\pm}(z,y)|\gg T^{\epsilon-2}$ and an arbitrary fixed $A>0$ we have
\begin{equation}\label{I integral estimate2}
J_{\pm}\left(\frac{n}{l},y,T\right)
\ll\frac{T^{-A}}{(1-y^2)^{1/2}}.
\end{equation}
For $0\le y<1$, $|x_{\pm}(z,y)|\ll T^{\epsilon-2}$ we have
\begin{equation}\label{I integral estimate3}
J_{\pm}\left(\frac{n}{l},y,T\right)
\ll\frac{T^{3+\epsilon}}{(1-y^2)^{1/2}}.
\end{equation}
\end{lem}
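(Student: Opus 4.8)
The two estimates \eqref{I integral estimate2} and \eqref{I integral estimate3} should both follow from inserting the uniform asymptotic expansion of Theorem \ref{thm 2F1 asympt} into the $r$-integral defining $I_{\pm}(z,y,s)$ and then controlling the resulting $K$-average. First I would fix $z=n/l$ and $y$, write $x=x_{\pm}(z,y)$, and substitute $\lambda=ir$, $\alpha=t/r$; since $|t|\ll T^{\epsilon}$ and $r\sim T$ we indeed have $|\alpha|\ll|\lambda|^{-1+\epsilon}$, so Theorem \ref{thm 2F1 asympt} applies with the Bessel functions rewritten via \eqref{ItoJ}. Using Stirling \eqref{Stirling2} for the Gamma factors $\Gamma(1-s\pm ir)$ and $\cosh(\pi r)$, the exponentially small/large factors cancel (as in \cite{BF2}), leaving an oscillatory integral of the shape
\begin{equation*}
\int_{-\infty}^{\infty} r^{2}h(r)V(|l|,r,s)\,\Psi(r,x)\,J_{0}(r\xi(x))\,dr + (\text{$J_1$-term}) + (\text{error}),
\end{equation*}
where $\xi=\xi(x)$ is given by \eqref{def:xi} and $\Psi$ is a smooth amplitude of polynomial growth $\ll T^{1/2+\epsilon}$ (coming from $r^{2}$, the Gamma quotient $\sim r$, and $V\ll 1$); the $e^{\lambda\eta}$ factor from \eqref{Fasymptotic} contributes only a bounded phase since $\eta=\alpha\log(1+x)\ll T^{\epsilon-1}\log(1+x)$.

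For \eqref{I integral estimate2}, the regime $|x|\gg T^{\epsilon-2}$: by \eqref{xi x 0} we have $\xi\gg T^{\epsilon/2-1}$ when $x$ is small and $\xi\gg\log x\gg 1$ when $x$ is large, so in all cases $r\xi\gg T^{\epsilon/2}$ and the Bessel functions may be replaced by their large-argument asymptotics $J_{0,1}(r\xi)\sim (r\xi)^{-1/2}(\text{oscillating factor }e^{\pm i r\xi})$. This turns the $r$-integral into $\int r^{3/2+\epsilon}\cdots e^{i r(\pm\xi)}\,h(r)\,dr$ with $h$ the Gaussian-type weight \eqref{hN def} of width $G=T^{1-\epsilon}$ centered at $K$; then the $K$-average in \eqref{Jdef} is itself a Gaussian integral, and after swapping the $r$- and $K$-integrations the $K$-integral produces a factor $\exp(-c(\xi)^{2}G^{2})$ or, more precisely, localizes $r$ near the stationary point of the combined phase. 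Repeated integration by parts in $r$ (the phase derivative is $\asymp\xi\gg T^{\epsilon/2-1}$ while each derivative of the amplitude loses only $T^{-1}$) beats down the whole thing to $\ll T^{-A}$, uniformly; the extra $(1-y^{2})^{-1/2}$ just rides along from the prefactor in \eqref{I integral hypergeom2}. Care is needed to check that the $\alpha$-dependence of $\xi$ (via \eqref{xi alpha 0}) does not spoil the lower bound on the phase derivative, but since $\partial_\alpha\xi=O(\alpha)=O(T^{\epsilon-1})$ this is a lower-order perturbation.

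For \eqref{I integral estimate3}, the regime $|x|\ll T^{\epsilon-2}$: now $\xi\asymp 2\sqrt{x(1-\alpha^{2})}\ll T^{\epsilon/2-1}$ by \eqref{xi x 0}, so $r\xi\ll T^{\epsilon/2}$ stays bounded (up to $T^{\epsilon}$) on the support of $h$, and hence $J_{0}(r\xi)=1+O((r\xi)^{2})$, $J_{1}(r\xi)/\xi = O(r)$: there is essentially no oscillation from the Bessel factors. The $r$-integral is then trivially bounded by $\int |r|^{2}|h(r)|\,|V|\,r\,dr \ll T^{3}\cdot T^{\epsilon}$ (the three powers of $T$: $r^{2}$, the Gamma quotient $\sim r$, and the length $G$ cancels against the $1/G$ normalization after the $K$-average), giving $J_{\pm}\ll T^{3+\epsilon}(1-y^{2})^{-1/2}$. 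This matches the claim, and morally records the obstruction already flagged in the introduction: when $x$ is this small the hypergeometric function is $\approx 1$ and there is no cancellation, so one can only win later by showing (in the proof of Theorem \ref{thm:2moment}) that the set of $(n,l,y)$ with $|x_{\pm}(n/l,y)|\ll T^{\epsilon-2}$ is small.

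\textbf{Main obstacle.} The delicate point is the uniformity in \eqref{I integral estimate2}: one must show the $T^{-A}$ bound holds simultaneously for all $x$ in the range $T^{\epsilon-2}\ll x<\infty$ and all admissible $\alpha$, which requires the amplitude $\Psi(r,x)$ (built from the $a_j,b_j$ of Theorem \ref{thm 2F1 asympt} and $a_0=-\xi^{1/2}/(2^{1/2}(x^{2}+x(1-\alpha^{2}))^{1/4})$) to have derivatives in $r$ controlled uniformly in $x$ — in particular near the transition $x\asymp T^{\epsilon-2}$ where $\xi$ is small and the large-argument Bessel asymptotics are only marginally valid, and near $x\to\infty$ where $\xi\sim\log x$ grows. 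I expect to handle the small-$x$ end by comparing with the Bessel transition region directly (using that $r\xi$ is bounded below by $T^{\epsilon/2}$ there, so finitely many terms of the Hankel expansion suffice), and the large-$x$ end by noting that $\partial_r(r\xi)=\xi+r\,\partial_r\xi$ stays $\asymp\log x$, so stationary phase/integration by parts in $r$ still applies with the standard $(1+r\xi)^{-1/2}$ decay of $J_\nu$. Everything else is a routine, if lengthy, adaptation of \cite[Lemma 4.12]{BF2} with the harmless extra factor $V(|l|,r,s)$.
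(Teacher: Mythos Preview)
Your approach is essentially the paper's: substitute Theorem \ref{thm 2F1 asympt}, use \eqref{ItoJ}, estimate trivially when $r\xi\ll T^{\epsilon}$ to get \eqref{I integral estimate3}, and for $r\xi\gg T^{\epsilon}$ apply the Bessel large-argument asymptotics to reduce to an oscillatory integral with phase $e^{ir\xi}$ against the Gaussian weight. The paper's execution of this last step differs slightly from your integration-by-parts suggestion: it changes variables $r=K+Gv$, Taylor-expands $\xi(\alpha)=\xi(0)+O(\alpha^{2})$ via \eqref{xi alpha 0}, separates out $V$ via \eqref{Vapproximation}, and then evaluates the Gaussian $\int e^{-v^{2}+iGv\xi(0)}\,dv=\sqrt{\pi}\,e^{-G^{2}\xi(0)^{2}/4}$ directly, which gives the $T^{-A}$ decay since $G\xi(0)\gg T^{\epsilon'}$.
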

\begin{proof}
To prove the required estimates we substitute \eqref{Fasymptotic} to \eqref{I integral hypergeom1V}. All terms coming from \eqref{Fasymptotic} can be estimated in the same way and the error term is negligible.  Using \eqref{ItoJ} we obtain
\begin{multline}\label{Iest1}
J_{\pm}\left(\frac{n}{l},y,T\right)
=\frac{e^{it\log\left(1+x_{\pm}(z,y)\right)}}{(1-y^2)^{1-s}}
\frac{1}{G\pi^{1/2}}\int_T^{2T}\int_{-\infty}^{\infty}r^2h(r)V(|l|,r,s)\\
\times\cosh(\pi r)\Gamma(1-s-ir)\Gamma(1-s+ir)J_0(r\xi)drdK+\ldots
\end{multline}

Suppose that $|x_{\pm}(z,y)|\ll T^{\epsilon-2}$. Since $r\sim T$ it follows from \eqref{xi x 0} that $r\xi\ll T^{\epsilon}.$  Estimating everything on the right hand side of \eqref{Iest1} trivially we prove \eqref{I integral estimate3}.

Suppose that $|x_{\pm}(z,y)|\gg T^{\epsilon-2}$. In this case we have $r\xi\gg T^{\epsilon}.$ Thus we can apply the asymptotic formula  \cite[8.451.1]{GR} for $J_0(r\xi)$ (again it is enough to consider only the main term). Consequently, it is required to evaluate
\begin{equation*}
\frac{1}{\xi^{1/2}}\int_{-\infty}^{\infty}r^{3/2}h(r)V(|l|,r,s)\cosh(\pi r)\Gamma(1-s-ir)\Gamma(1-s+ir)e^{ir\xi}dr.
\end{equation*}
Using \eqref{hN def} and the Stirling formula \eqref{Stirling2} for the Gamma functions, we infer (see \cite[(6.12)]{BF2})
\begin{multline}\label{Iest3}
J_{\pm}\left(\frac{n}{l},y,T\right)
\ll
\frac{(1-y^2)^{-1/2}}{G\pi^{1/2}}\int_T^{2T}K^{2it}
\int_{-\infty}^{\infty}\frac{r^{3/2}}{\xi^{1/2}}V(|l|,r,s)\exp\left(-\frac{(r-K)^2}{G^2}+ir\xi\right)drdK.
\end{multline}
In the integral over $r$ we first make the change of variable $r=K+Gv$. Note that $\xi$ depends on $\alpha$ (and therefore depends on $r$). To overcome this difficulty, we expand it in Taylor series \eqref{xi alpha 0} at $\alpha=0$ with sufficiently many terms. As a result, we obtain
\begin{multline}\label{Iest4}
J_{\pm}\left(\frac{n}{l},y,T\right)
\ll\frac{\xi(0)^{-1/2}}{(1-y^2)^{1/2}}
\int_T^{2T}K^{3/2+2it}e^{iK\xi(0)}\\\times
\int_{-\infty}^{\infty}V(|l|,K+Gv,s)\exp\left(-v^2+iGv\xi(0)\right)dvdK,
\end{multline}
where
$$\xi(0)=\log(1+2x_{\pm}(z,y)+2\sqrt{x_{\pm}(z,y)^2+x_{\pm}(z,y)}).$$
In order to evaluate the integral over $v$, it is convenient to use the representation \eqref{Vapproximation} for $V(|l|,K+Gv,s)$. Before doing this, at the cost of a negligible error term we truncate the integral over $v$ at $v=\pm(\log T)^2$.  Now we apply \eqref{Vapproximation} (with $a$ being chosen as a small fixed $\epsilon_1>0$). It is sufficient to consider only the main term from \eqref{Vapproximation} since all other terms can be treated in the same way and are smaller in size. The error term coming from the remainder in \eqref{Vapproximation} is negligible. Therefore,  we have
\begin{multline*}
\int_{-\infty}^{\infty}V(|l|,K+Gv,s)\exp\left(-v^2+iGv\xi(0)\right)dv=
\int_{-(\log T)^2}^{(\log T)^2}\exp(-v^2+iGv\xi(0))\\\times
\frac{1}{2\pi i}\int_{(\epsilon_1)}
\left(\frac{K+Gv}{\pi^{3/2}|l|}\right)^{2u}
\frac{\Gamma(s+u)}{\Gamma(s)}\zeta_{\kk}(2s+2u)\mathfrak{F}(u)\frac{du}{u}dv+\ldots
\end{multline*}
Since  the function $\mathfrak{F}(u)$ decays exponentially we can truncate the integral over $u$ at $|\Im{u}|=\log T$ with a negligibly small error term. Consequently, we have $|Gvu/K|\ll T^{-\epsilon}$, and therefore, it is possible to replace  $(K+Gv)^u$ by $K^u$ since
\begin{equation*}
(K+Gv)^u=K^u\left(1+\frac{Gvu}{K}+\ldots\right).
\end{equation*}
As a result,
\begin{multline*}
\int_{-\infty}^{\infty}V(|l|,K+Gv,s)\exp\left(-v^2+iGv\xi(0)\right)dv=
\frac{1}{2\pi i}\int_{(\epsilon_1)}
\left(\frac{K}{\pi^{3/2}|l|}\right)^{2u}
\frac{\Gamma(s+u)}{\Gamma(s)}\\\times
\zeta_{\kk}(2s+2u)\mathfrak{F}(u)
\int_{-(\log T)^2}^{(\log T)^2}\exp(-v^2+iGv\xi(0))dv
\frac{du}{u}+\ldots
\end{multline*}
Now we can extend the integral over $v$ to the whole real line at a cost of a negligible error term. Evaluating the resulting integral, we obtain
\begin{multline}\label{Iest5}
\int_{-\infty}^{\infty}V(|l|,K+Gv,s)\exp\left(-v^2+iGv\xi(0)\right)dv=
\pi^{1/2}\exp\left(-G^2\xi(0)^2\right)\\\times
\frac{1}{2\pi i}\int_{(\epsilon_1)}
\left(\frac{K}{\pi^{3/2}|l|}\right)^{2u}
\frac{\Gamma(s+u)}{\Gamma(s)}\zeta_{\kk}(2s+2u)\mathfrak{F}(u)
\frac{du}{u}+\ldots
\end{multline}
Substituting \eqref{Iest5} to \eqref{Iest4}  and estimating the integrals over $u$ and $K$ trivially, we prove that
\begin{equation*}
J_{\pm}\left(\frac{n}{l},y,T\right)
\ll
\frac{\exp\left(-G^2\xi^2(0)\right)+T^{-A}}{(1-y^2)^{1/2}\xi(0)^{1/2}}T^{5/2+\epsilon}.
\end{equation*}
Finally, since $G=T^{1-\epsilon_2}$ and $|x_{\pm}(z,y)|\gg T^{\epsilon-2}$ we have $G\xi(0)\gg T^{\epsilon_3}$. This completes the proof of \eqref{I integral estimate2}.
 \end{proof}
The next step is to apply Lemmas \ref{lemma I estimate ynear1} and \ref{lemma I estimate main} for estimating \eqref{S(l) def}. To prove Theorem \ref{thm:2moment} it is enough to show (see \eqref{M2 estimate2}) that
\begin{equation}\label{S(l) estimate}
\sum_{|l|\ll T^{1+\epsilon}}\sum_{\pm}\frac{S_{\pm}(l)}{|l|^2}\ll
\sum_{|l|\ll T^{1+\epsilon}}\sum_{\pm}\frac{1}{|l|^2}
\sum_{\substack{n\neq0,2l,-2l,\\|n|\ll T^{\epsilon}|l|}}|n^2-4l^2|^{2\theta}
\left|\int_0^{1}
J_{\pm}\left(\frac{n}{l},y,T\right)\frac{dy}{(1-y^2)^{1/2}}\right|
\ll T^{3+4\theta+\epsilon},
\end{equation}
where for $S_{\pm}(l)$  we used the estimate \eqref{S(l) est0}. We consider further only $S_{-}(l)$ since the sum $S_{+}(l)$ can be estimated similarly. Moreover, without loss of generality we may assume that $l$ belongs to the first quadrant (we  denote this as $l\in I$).
To prove Theorem \ref{thm:2moment} it is enough to show that
\begin{equation}\label{desired est1}
\sum_{\substack{|l|\ll T^{1+\epsilon}\\l\in I}}\frac{S_{-}(l)}{|l|^2}\ll T^{3+4\theta+\epsilon},
\end{equation}
\begin{equation}\label{desired est2}
S_{-}(l)=
\sum_{n\in N(l,T)}
|n^2-4l^2|^{2\theta}\left|\int_0^{1}
J_{-}\left(\frac{n}{l},y,T\right)\frac{dy}{(1-y^2)^{1/2}}\right|,
\end{equation}
where $N(l,T)=\left\{n:  n\neq0,\pm2l,\\|n|\ll T^{\epsilon}|l|\right\}$.

First, we decompose the sum $S_{-}(l)$ into two parts depending on whether  $\cos\vartheta=\cos(\arg(n/l))\le0$ or  $\cos\vartheta=\cos(\arg(n/l))>0,$ namely
\begin{equation}\label{decomposition 1}
S_{-}(l)=S_{-}^{1}(l)+S_{-}^{2}(l),
\end{equation}
\begin{equation}\label{Scpm def}
S_{-}^{1}(l)=
\sum_{\substack{n\in N(l,T)\\\cos\vartheta\leq 0}}
|n^2-4l^2|^{2\theta}\left|\int_0^{1}J_{-}\left(\frac{n}{l},y,T\right)\frac{dy}{(1-y^2)^{1/2}}\right|,
\end{equation}
\begin{equation}\label{Scpm def}
S_{-}^{2}(l)=
\sum_{\substack{n\in N(l,T)\\\cos\vartheta>0}}
|n^2-4l^2|^{2\theta}\left|\int_0^{1}J_{-}\left(\frac{n}{l},y,T\right)\frac{dy}{(1-y^2)^{1/2}}\right|.
\end{equation}

\begin{lem}\label{lemma Sc- est}
The following estimate holds
\begin{equation}\label{Sc- est}
\sum_{|l|\ll T^{1+\epsilon}}\frac{S_{-}^{1}(l)}{|l|^2}\ll T^{2+4\theta+\epsilon}.
\end{equation}
\end{lem}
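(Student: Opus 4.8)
The plan is to treat the regime $\cos\vartheta\le 0$, where the quadratic form $f_{-}(z,y)=y^2-|z|y\cos\vartheta+|z/2|^2$ has no cancellation between its terms: since $-\cos\vartheta\ge 0$, every summand is nonnegative, so $f_{-}(z,y)\ge y^2+|z/2|^2\ge |z/2|^2=|n/(2l)|^2$. In particular $f_{-}$ is bounded below by a fixed positive quantity depending only on $n/l$, and more importantly $x_{-}(z,y)=f_{-}(z,y)/(1-y^2)\ge |n/(2l)|^2$ for all $0\le y<1$. The strategy is then to split the $y$-integral in \eqref{Scpm def} at $y=1-T^{-B}$ for a suitable large $B$, apply Lemma \ref{lemma I estimate ynear1} to the tail $1-T^{-B}<y<1$ (the hypothesis $f_{-}(z,y)>T^{-a}$ holds with, say, $a$ a small multiple of $\epsilon$ once $|n|\ge 1$, since $f_-\ge |n/(2l)|^2\gg |l|^{-2}\gg T^{-2-\epsilon}$), and apply Lemma \ref{lemma I estimate main} to the bulk $0\le y\le 1-T^{-B}$.

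For the bulk range the key point is a dichotomy governed by the size of $x_{-}(z,y)$. If $x_{-}(z,y)\gg T^{\epsilon-2}$ then \eqref{I integral estimate2} gives a negligible contribution. If $x_{-}(z,y)\ll T^{\epsilon-2}$ then \eqref{I integral estimate3} gives $J_{-}\ll T^{3+\epsilon}(1-y^2)^{-1/2}$, but the crucial observation is that in the $\cos\vartheta\le 0$ case this small-$x$ regime is essentially empty or tiny: since $x_{-}(z,y)\ge |n/(2l)|^2$, having $x_{-}(z,y)\ll T^{\epsilon-2}$ forces $|n/l|\ll T^{\epsilon-1}$, i.e. (using $|l|\ll T^{1+\epsilon}$) a very restrictive constraint on $n$ relative to $l$; moreover even then the set of $y\in[0,1-T^{-B}]$ with $x_-\ll T^{\epsilon-2}$ has measure at most $\ll 1-y^2\ll f_-/T^{\epsilon-2}$, which is itself small. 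So I would estimate: for each $n$, $\int_0^{1-T^{-B}} |J_-|\,dy/(1-y^2)^{1/2}\ll T^{3+\epsilon}\cdot(\text{measure of the small-}x\text{ set, suitably weighted})+T^{-A}$, and bound the measure by noting $1-y^2\ll f_-(z,y)T^{2-\epsilon}$ on that set. Combining with $f_-\ll |n/l|^2+1$ for $|n|\ll T^\epsilon|l|$, one gets an effective saving.

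Putting the pieces together, for each fixed $l\in I$ and each $n\in N(l,T)$ with $\cos\vartheta\le 0$, the inner $y$-integral is bounded by $T^{-A}$ (negligible) plus a genuinely small contribution, so that $S_{-}^{1}(l)\ll T^{\epsilon}\sum_{|n|\ll T^\epsilon|l|}|n^2-4l^2|^{2\theta}\cdot(\text{small factor})$. Using $|n^2-4l^2|^{2\theta}\ll |l|^{4\theta+\epsilon}$ and the fact that there are $\ll |l|^{2+\epsilon}$ lattice points $n$ with $|n|\ll T^\epsilon|l|$ — but with the extra saving from the constraint $x_-\ll T^{\epsilon-2}$ cutting down the effective count — one arrives at $S_{-}^{1}(l)\ll |l|^{4\theta}T^{\epsilon}\cdot(\text{something like } |l|^2 T^{-2}\cdot T^{2})$, and after dividing by $|l|^2$ and summing over $|l|\ll T^{1+\epsilon}$ one should land on $T^{2+4\theta+\epsilon}$. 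I expect the main obstacle to be the bookkeeping of the $y$-measure estimate near the boundary $y\to 1$ together with the joint summation over $n$ and $l$: one must be careful that the lower bound $x_-\ge|n/(2l)|^2$ is leveraged uniformly in $y$ so that the only surviving $n$ are those with $|n|$ genuinely small compared to $|l|$, and track the powers of $T$ and $\log T$ through Lemmas \ref{lemma I estimate ynear1} and \ref{lemma I estimate main} so that the exponent comes out exactly $2+4\theta+\epsilon$ rather than anything larger. The fact that this bound is one power of $T$ better than the target $T^{3+4\theta+\epsilon}$ in \eqref{desired est1} means there is room to be slightly wasteful here, which is reassuring.
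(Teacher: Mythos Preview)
Your overall strategy matches the paper's: split at $y=1-T^{-B}$, kill the tail with Lemma~\ref{lemma I estimate ynear1}, and on the bulk use the dichotomy of Lemma~\ref{lemma I estimate main}. The gap is in how you handle the small-$x$ set. You only record $x_{-}(z,y)\ge |n/(2l)|^2$, but this discards half of the information. Since $\cos\vartheta\le 0$ you actually have
\[
x_{-}(n/l,y)\;\ge\;\frac{y^2+|n/(2l)|^2}{1-y^2},
\]
and it is the $y^2$ term that pins down the $y$-integral. Indeed $x_{-}\ll T^{-2+\epsilon}$ forces simultaneously $y^2\ll T^{-2+\epsilon}$ and $|n/(2l)|^2\ll T^{-2+\epsilon}$, i.e.\ $y\ll T^{-1+\epsilon}$ and $|n|\ll |l|T^{-1+\epsilon}\ll T^{\epsilon}$. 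Your attempted measure bound ``$1-y^2\ll f_{-}/T^{\epsilon-2}$'' has the inequality in the wrong direction (from $f_{-}/(1-y^2)\ll T^{\epsilon-2}$ one gets $1-y^2\gg f_{-}T^{2-\epsilon}$, which is not a smallness statement) and is not what is needed anyway.

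Once you use the sharper lower bound, the computation is clean and there is no need for the vague ``(something like $|l|^2T^{-2}\cdot T^2$)'' bookkeeping. By \eqref{I integral estimate3},
\[
\int_{0}^{T^{-1+\epsilon}}\frac{T^{3+\epsilon}}{1-y}\,dy\ \ll\ T^{2+\epsilon},
\]
and there are $O(T^{\epsilon})$ admissible $n$, each with $|n^2-4l^2|^{2\theta}\ll |l|^{4\theta}$. Hence $S_{-}^{1}(l)\ll |l|^{4\theta}T^{2+\epsilon}$, and summing over Gaussian integers $l$ with $|l|\ll T^{1+\epsilon}$ gives
\[
\sum_{|l|\ll T^{1+\epsilon}}\frac{S_{-}^{1}(l)}{|l|^2}\ \ll\ T^{2+\epsilon}\sum_{|l|\ll T^{1+\epsilon}}|l|^{4\theta-2}\ \ll\ T^{2+4\theta+\epsilon},
\]
exactly as claimed.
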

\begin{proof}
Since  $\cos\vartheta=\cos(\arg(n/l))\le0$ we have (see \eqref{x def2})
\begin{equation}\label{x est0}
x_{-}(n/l,y)\ge\frac{y^2+|n/(2l)|^2}{1-y^2}.
\end{equation}
Let $B$ be a fixed large number. Since $|n/(2l)|\gg T^{-1-\epsilon}$ Lemma \ref{lemma I estimate ynear1} implies that
\begin{equation*}
\int_{1-T^{-B}}^{1}
J_{\pm}\left(\frac{n}{l},y,T\right)\frac{dy}{(1-y^2)^{1/2}}\ll T^{-A},
\end{equation*}
where $A$ can be made arbitrary large by taking sufficiently large $B$.

 Let us now consider the case $y<1-T^{-B}$.
Due to \eqref{I integral estimate2} the contribution of $n$ such that $x_{-}(n/l,y)\gg T^{-2+\epsilon}$ is negligible.
When $x_{-}(n/l,y)\ll T^{-2+\epsilon}$ we apply \eqref{I integral estimate3}. It follows from \eqref{x est0} that in this case $y\ll T^{-1+\epsilon}$ and $|n|\ll|l|T^{-1+\epsilon}\ll T^{\epsilon}$. Therefore, the contribution of such $n$ can be estimated by
\begin{equation}\label{S(l) est1}
S_{-}^{1}(l)\ll\sum_{|n|\ll T^{\epsilon}}|n^2-4l^2|^{2\theta}
\int_0^{T^{-1+\epsilon}}
\frac{T^{3+\epsilon}}{1-y}dy\ll|l|^{4\theta}T^{2+\epsilon}.
\end{equation}
Summing \eqref{S(l) est1} over $|l|\ll T^{1+\epsilon}$ we obtain \eqref{Sc- est}.
\end{proof}
Next, we consider the case $\cos\vartheta=\cos(\arg(n/l))>0.$ Let
\begin{equation}\label{2l,n abcd}
2l=a+ib,\quad n=c+id,
\end{equation}
where $a,b,c,d$ are integers (and $a,b\ge0$). Since $\vartheta=\arg(n/2l)$ we have
\begin{equation}\label{yn def}
y_n:=\Re\left(\frac{n}{2l}\right)=\left|\frac{n}{2l}\right|\cos\vartheta=\frac{ac+bd}{a^2+b^2},
\end{equation}
\begin{equation}\label{sn def}
s_n:=\Im\left(\frac{n}{2l}\right)=\left|\frac{n}{2l}\right|\sin\vartheta=\frac{ad-bc}{a^2+b^2},
\end{equation}
\begin{equation}\label{snyn def}
y^2-2y\left|\frac{n}{2l}\right|\cos\vartheta+\left|\frac{n}{2l}\right|^2=(y-y_n)^2+s_n^2.
\end{equation}
Since $|2l|^2=a^2+b^2\ll T^{2+\epsilon}$  either $y_n=1,\, s_n=1$ or
\begin{equation}\label{ynsn notnear1}
|y_n-1|,|s_n-1|\gg\frac{1}{T^{2+\epsilon}}.
\end{equation}
Using \eqref{yn def}, \eqref{sn def}, \eqref{snyn def} it is possible to rewrite  \eqref{x def2} as
\begin{equation}\label{f-to ynsn}
f_{-}(n/l,y)=(y-y_n)^2+s_n^2,\quad
x_{-}(n/l,y)=\frac{(y-y_n)^2+s_n^2}{1-y^2}.
\end{equation}

It follows from Lemma \ref{lemma I estimate main} that the main contribution to $S_{-}^{2}(l)$ comes from $n$ such that  $x_{-}(n/l,y)\ll T^{-2+\epsilon}.$ This may happen (see \eqref{f-to ynsn}) only if $(y-y_n)^2+s_n^2\ll T^{-2+\epsilon}$.
Furthermore, the contribution of $y$ close to $1$ should be treated separately by Lemma \ref{lemma I estimate ynear1}.
We split the  sum over $n$ in $S_{-}^{2}(l)$ into several parts depending on values of $y_n$ and  $s_n$ as follows
\begin{equation}\label{decomposition 2}
S_{-}^{2}(l)=\sum_{j=1}^{8}S_j(l), \quad 
S_{j}(l)=
\sum_{\substack{n\in N_j(l,T)\\\cos\vartheta>0}}
|n^2-4l^2|^{2\theta}\left|\int_0^{1}J_{-}\left(\frac{n}{l},y,T\right)\frac{dy}{(1-y^2)^{1/2}}\right|,
\end{equation}
where
\begin{equation}\label{N1 def}
N_1(l,T)=\left\{n\in N(l,T)\Bigl| |s_n|\gg T^{-1+\epsilon}\right\},
\end{equation}

\begin{equation}\label{N2 def}
N_2(l,T)=\left\{n\in N(l,T)\Bigl| y_n=1, |s_n|\ll T^{-1+\epsilon}\right\},
\end{equation}

\begin{equation}\label{N3 def}
N_3(l,T)=\left\{n\in N(l,T)\Bigl| y_n\le1,  s_n=0 \right\},
\end{equation}

\begin{equation}\label{N4 def}
N_4(l,T)=\left\{n\in N(l,T)\Bigl| y_n<1-\epsilon_1, 0<|s_n|\ll T^{-1+\epsilon}\right\},
\end{equation}

\begin{equation}\label{N5 def}
N_5(l,T)=\left\{n\in N(l,T)\Bigl| 1-T^{-2+\epsilon}<y_n<1, 0<|s_n|\ll T^{-1+\epsilon}\right\},
\end{equation}

\begin{equation}\label{N6 def}
N_6(l,T)=\left\{n\in N(l,T)\Bigl| 1-\epsilon_1<y_n<1-T^{-2+\epsilon}, 0<|s_n|\ll T^{-1+\epsilon}\right\},
\end{equation}

\begin{equation}\label{N7 def}
N_7(l,T)=\left\{n\in N(l,T)\Bigl| y_n>1+T^{-2+\epsilon}, 0<|s_n|\ll T^{-1+\epsilon}\right\},
\end{equation}

\begin{equation}\label{N8 def}
N_8(l,T)=\left\{n\in N(l,T)\Bigl| 1<y_n<1+T^{-2+\epsilon}, 0<|s_n|\ll T^{-1+\epsilon}\right\},
\end{equation}

\begin{equation}\label{N8 def}
N_9(l,T)=\left\{n\in N(l,T)\Bigl| 1<y_n, s_n=0\right\}.
\end{equation}

Furthermore, we will separately estimate the integrals over $y<1-T^{-B}$ and over $1-T^{-B}<y<1$.
\begin{lem}\label{lemma S1 est}
The following estimate holds
\begin{equation}\label{S1 est}
\sum_{|l|\ll T^{1+\epsilon}}\frac{S_{1}(l)}{|l|^2}\ll T^{-A}.
\end{equation}
\end{lem}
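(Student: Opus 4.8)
The plan is to show that \emph{every} individual summand of $S_1(l)$ is negligible, exploiting the defining feature of the set $N_1(l,T)$ in \eqref{N1 def}, namely that $|s_n|\gg T^{-1+\epsilon}$. First I would note that, by \eqref{f-to ynsn} and $1-y^2\le 1$, for every $y\in[0,1)$ and every $n\in N_1(l,T)$ one has
\[
x_{-}\Bigl(\frac{n}{l},y\Bigr)=\frac{(y-y_n)^2+s_n^2}{1-y^2}\ \ge\ (y-y_n)^2+s_n^2\ \ge\ s_n^2\ \gg\ T^{-2+2\epsilon},
\]
so that for $T$ large $x_{-}(n/l,y)\gg T^{-2+\epsilon}$ \emph{uniformly} in $y$. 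Thus on $N_1(l,T)$ we are always in the first (negligible) regime of Lemma \ref{lemma I estimate main}.

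Next I would split the $y$-integral defining $J_{-}(n/l,y,T)$ at $y=1-T^{-B}$ for a large parameter $B$ to be fixed at the end. On the range $0\le y<1-T^{-B}$ the estimate \eqref{I integral estimate2} gives $J_{-}(n/l,y,T)\ll T^{-C}(1-y^2)^{-1/2}$ with $C$ arbitrary, and since $\int_0^{1-T^{-B}}(1-y^2)^{-1}\,dy\ll\log T$ this part contributes $\ll T^{-C+\epsilon}$. On the thin strip $1-T^{-B}<y<1$ I would instead invoke Lemma \ref{lemma I estimate ynear1}: its hypothesis holds with $a=2$, because $f_{-}(n/l,y)=(y-y_n)^2+s_n^2\ge s_n^2\gg T^{-2+2\epsilon}>T^{-2}$, and for $B>7$ the lemma gives a contribution $\ll T^{(7-B)/2}$. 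Combining the two ranges,
\[
\Bigl|\int_0^{1}J_{-}\Bigl(\frac{n}{l},y,T\Bigr)\,\frac{dy}{(1-y^2)^{1/2}}\Bigr|\ \ll\ T^{-C+\epsilon}+T^{(7-B)/2}.
\]

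It then remains only to sum over $n$ and $l$ trivially. Since $N_1(l,T)\subseteq N(l,T)$ forces $n\neq 0,\pm 2l$ and $|n|\ll T^{\epsilon}|l|$, we have $\#N_1(l,T)\ll T^{2\epsilon}|l|^2$ and $|n^2-4l^2|\ll T^{2+\epsilon}$, hence $|n^2-4l^2|^{2\theta}\ll T^{4\theta+\epsilon}$; together with $\#\{l:|l|\ll T^{1+\epsilon}\}\ll T^{2+\epsilon}$ this yields
\[
\sum_{|l|\ll T^{1+\epsilon}}\frac{S_{1}(l)}{|l|^2}\ \ll\ T^{2+4\theta+\epsilon}\bigl(T^{-C+\epsilon}+T^{(7-B)/2}\bigr),
\]
which (recalling $\theta\le 1/6$, so the prefactor is $\ll T^{3}$) is $\ll T^{-A}$ for the prescribed $A$ once $C$ and $B$ are chosen large enough. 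I do not expect a genuine obstacle here: the only delicate point is the logarithmic blow-up of $\int(1-y^2)^{-1}$ as $y\to1$, which is precisely why the strip $1-T^{-B}<y<1$ must be peeled off and treated via Lemma \ref{lemma I estimate ynear1} rather than by \eqref{I integral estimate2}; everything else is routine bookkeeping of negligible contributions.
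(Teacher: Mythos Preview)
Your proposal is correct and follows essentially the same route as the paper: use $|s_n|\gg T^{-1+\epsilon}$ to bound $f_{-}(n/l,y)\ge s_n^2$ from below, split the $y$-integral at $1-T^{-B}$, handle the thin strip via Lemma~\ref{lemma I estimate ynear1}, and handle the bulk via \eqref{I integral estimate2}. The paper's proof is terser (it omits the explicit trivial summation over $n$ and $l$ and the log factor from $\int(1-y^2)^{-1}dy$), but the logic is identical.
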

\begin{proof}
Since $s_n\gg T^{-1+\epsilon}$  we have $f_{-}(n/l,y)\gg T^{-2+\epsilon}$. Consequently, by Lemma \ref{lemma I estimate ynear1} 
\begin{equation}\label{int near1 small}
\int_{1-T^{-B}}^{1}
J_{-}\left(\frac{n}{l},y,T\right)\frac{dy}{(1-y^2)^{1/2}}\ll T^{-A}.
\end{equation}
Next, we consider the part of $S_{1}(l)$ with the integral over $y<1-T^{-B}.$ In this case $x_{-}(n/l,y)\gg T^{-2+\epsilon}$, and therefore, the contribution of this part to  $S_{1}(l)$ is $O(T^{-A})$ by \eqref{I integral estimate2}.
\end{proof}

\begin{lem}\label{lemma S2 est}
The following estimate holds
\begin{equation}\label{S2 est}
\sum_{|l|\ll T^{1+\epsilon}}\frac{S_{2}(l)}{|l|^2}\ll T^{-A}.
\end{equation}
\end{lem}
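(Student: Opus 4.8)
The plan is to follow the proof of Lemma~\ref{lemma S1 est}, the only new ingredient being an arithmetic lower bound for $|s_n|$ that holds for every $n\in N_2(l,T)$. Recall that such $n$ satisfy $y_n=1$, $|s_n|\ll T^{-1+\epsilon}$ and $n\neq\pm2l$; in particular $n\neq2l$, so $n/(2l)=1+is_n$ with $s_n\neq0$. Writing $2l=a+ib$ as in~\eqref{2l,n abcd} and setting $g=\gcd(a,b)$, the requirement $n\in\Z[i]$ forces $2l\cdot is_n=n-2l\in\Z[i]$, i.e. $as_n,\,bs_n\in\Z$, and hence $gs_n\in\Z$ by B\'ezout's identity. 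Thus $s_n=m/g$ for some nonzero integer $m$, so that
\[
|s_n|\ge\frac1g\ge\frac1{|2l|}\gg T^{-1-\epsilon}.
\]

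The first step is to transfer this to $x_-(n/l,y)$. By~\eqref{f-to ynsn}, for $0\le y<1$ one has $x_-(n/l,y)=\bigl((y-1)^2+s_n^2\bigr)/(1-y^2)$; using $1+y\le2$ together with $(y-1)^2+s_n^2\ge2(1-y)|s_n|$ we obtain
\[
x_-(n/l,y)\ge|s_n|\gg T^{-1-\epsilon}\gg T^{\epsilon-2}\qquad(0\le y<1).
\]
Therefore~\eqref{I integral estimate2} applies for every $y\in[0,1)$, giving $J_-(n/l,y,T)\ll T^{-A}(1-y^2)^{-1/2}$, whence
\[
\int_0^{1-T^{-B}}J_-\Bigl(\tfrac nl,y,T\Bigr)\frac{dy}{(1-y^2)^{1/2}}\ll T^{-A}\int_0^{1-T^{-B}}\frac{dy}{1-y^2}\ll T^{-A+\epsilon}.
\]

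For the range $1-T^{-B}<y<1$ I would use Lemma~\ref{lemma I estimate ynear1}: here $f_-(n/l,y)=(y-1)^2+s_n^2\ge s_n^2\gg T^{-2-2\epsilon}$, so the hypothesis $f_-(n/l,y)>T^{-a}$ holds with any fixed $a>2$, and choosing $B>a+5$ sufficiently large gives $\int_{1-T^{-B}}^{1}J_-(n/l,y,T)(1-y^2)^{-1/2}\,dy\ll T^{(5+a-B)/2}\ll T^{-A}$. Combining the two ranges, for every $n\in N_2(l,T)$ we get $\int_0^{1}J_-(n/l,y,T)(1-y^2)^{-1/2}\,dy\ll T^{-A}$.

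It then remains to sum trivially. Since $|n|\ll T^{\epsilon}|l|$, we have $|n^2-4l^2|^{2\theta}\ll T^{4\theta+\epsilon}$, and $\#N_2(l,T)\ll T^{2+\epsilon}$ (in fact $\ll T^{\epsilon}$, since $|m|\ll gT^{-1+\epsilon}\ll T^{\epsilon}$), so $S_2(l)\ll T^{-A}$ after renaming $A$; summing over $|l|\ll T^{1+\epsilon}$ yields~\eqref{S2 est}. The genuinely new point, and the main obstacle, is the lower bound $|s_n|\gg T^{-1-\epsilon}$: the crude estimate $|s_n|\ge1/|2l|^2\gg T^{-2-\epsilon}$ coming merely from $ad-bc\in\Z\setminus\{0\}$ does not force $x_-(n/l,y)\gg T^{\epsilon-2}$, and one must exploit that $y_n=1$ makes the nonzero integer $ad-bc$ divisible by $\gcd(a,b)$.
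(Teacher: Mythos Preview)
Your proof is correct and follows essentially the same approach as the paper. The paper parametrizes the solutions of $ac+bd=a^2+b^2$ explicitly as $c_j=a+bj/(a,b)$, $d_j=b-aj/(a,b)$ and computes $s_{n_j}=-j/(a,b)$, whereas you reach the same conclusion $s_n\in\tfrac{1}{\gcd(a,b)}\Z\setminus\{0\}$ via B\'ezout; both then use the AM--GM type bound $x_-(n/l,y)\gg|s_n|\gg T^{-1-\epsilon}$ together with Lemma~\ref{lemma I estimate main} and Lemma~\ref{lemma I estimate ynear1} exactly as you do.
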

\begin{proof}
In this case $y_n=1$ and $0<|s_n|\ll T^{-1+\epsilon}$.
Using \eqref{yn def}, \eqref{sn def},  we obtain
\begin{equation}\label{nestimate1}
|n|\le|2l|\sqrt{1+O(T^{-2+\epsilon})}.
\end{equation}
It follows from \eqref{yn def} that $y_n=1$ is equivalent to
$ac+bd=a^2+b^2.$ The set of solutions of this linear equation is as follows
\begin{equation}\label{systemsolutions}
c_j=a+\frac{bj}{(a,b)},\quad
d_j=b-\frac{aj}{(a,b)}, \quad j\in\Z.
\end{equation}
Let $n_j=c_j+id_j.$ Then $s_{n_j}=-j/(a,b).$ Thus \eqref{f-to ynsn} can be rewritten as
\begin{equation*}
x_{-}(n_j/l,y)\gg1-y+\frac{s_{n_j}^2}{1-y}\gg |s_{n_j}|=\frac{|j|}{(a,b)}\gg\frac{1}{T^{1+\epsilon}}.
\end{equation*}
Furthermore, $|n_j|=|2l|\sqrt{1+j^2/(a,b)^2}$. Consequently, from \eqref{nestimate1} we conclude that $|j|\ll T^{\epsilon}.$ Therefore,  the contribution to $S_{2}(l)$ of the integral over  $y<1-T^{-B}$ for such $n_j$  is  negligible.

To deal with the integral over $1-T^{-B}<y<1$ we note that
\begin{equation*}
f_{-}(n_j/l,y)=(1-y)^2+s_{n_j}^2\gg \frac{j^2}{(a,b)^2}\gg\frac{1}{T^{2+\epsilon}}.
\end{equation*}
Applying Lemma \ref{lemma I estimate ynear1} we again obtain $O(T^{-A})$.
\end{proof}

\begin{lem}\label{lemma S3 est}
The following estimate holds
\begin{equation}\label{S3 est}
\sum_{|l|\ll T^{1+\epsilon}}\frac{S_{3}(l)}{|l|^2}\ll T^{2+4\theta+\epsilon}.
\end{equation}
\end{lem}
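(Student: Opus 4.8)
The plan is to exploit the rigidity of the condition $s_n=0$ defining $N_3(l,T)$: it pins $n$ to an explicit progression of length $\asymp(\Re l,\Im l)$, and this gcd then governs the sum over $l$. First I would parametrise $N_3(l,T)$: writing $2l=a+ib$, $g=(a,b)$ and $2l=g\mu$ with $\mu=(a/g)+i(b/g)$, $(a/g,b/g)=1$, the condition $s_n=\Im(n/(2l))=0$ forces $n/(2l)\in\Q$, and with $n\in\Z[i]$ this gives $n=t\mu$ for an integer $t$; the constraints $\cos\vartheta>0$, $0<y_n\le1$ and $n\neq0,\pm2l$ then restrict $t$ to $\{1,\dots,g-1\}$. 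For such $n$ one has $y_n=t/g$, and by \eqref{f-to ynsn}
\[
f_{-}(n/l,y)=(y-y_n)^2,\qquad x_{-}(n/l,y)=\frac{(y-y_n)^2}{1-y^2},\qquad |n^2-4l^2|=|2l|^2(1-y_n^2);
\]
since $t\le g-1$ and $g\le|2l|\ll T^{1+\epsilon}$ this yields the key bound $1-y_n\ge1/g\gg T^{-1-\epsilon}$.

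Next I would bound, for each such $n$, the inner integral $\int_0^1 J_{-}(n/l,y,T)(1-y^2)^{-1/2}\,dy$. On $1-T^{-B}<y<1$ the bound $1-y_n\gg T^{-1-\epsilon}$ gives $f_{-}(n/l,y)\gg T^{-2-\epsilon}$, so Lemma~\ref{lemma I estimate ynear1} (with $a=2+\epsilon$ and $B$ large) makes this part $O(T^{-A})$. On $0\le y<1-T^{-B}$ the sub-range where $x_{-}(n/l,y)\gg T^{\epsilon-2}$ is $O(T^{-A})$ by \eqref{I integral estimate2}; on the sub-range $x_{-}(n/l,y)\ll T^{\epsilon-2}$ one checks (again using $1-y_n\gg T^{-1-\epsilon}$) that $1-y^2\asymp1-y_n^2$ there, and that this sub-range is an interval about $y_n$ of length $\ll T^{-1+\epsilon}(1-y_n^2)^{1/2}$, so that \eqref{I integral estimate3} contributes $\ll T^{3+\epsilon}(1-y_n^2)^{-1}\cdot T^{-1+\epsilon}(1-y_n^2)^{1/2}$. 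Altogether
\[
\int_0^1 J_{-}(n/l,y,T)\frac{dy}{(1-y^2)^{1/2}}\ll\frac{T^{2+\epsilon}}{(1-y_n^2)^{1/2}}+T^{-A}.
\]

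It then remains to sum. Multiplying by $|n^2-4l^2|^{2\theta}=|2l|^{4\theta}(1-y_n^2)^{2\theta}$ and summing over $t=1,\dots,g-1$, with $1-y_n^2=(g-t)(g+t)/g^2$ and the exponent $2\theta-\tfrac12$ being $>-1$, one has $\sum_{t=1}^{g-1}\bigl((g-t)(g+t)/g^2\bigr)^{2\theta-1/2}\ll g$, hence $S_3(l)\ll|2l|^{4\theta}T^{2+\epsilon}g\ll|l|^{4\theta}(\Re l,\Im l)\,T^{2+\epsilon}$ (as $g=2(\Re l,\Im l)$). Finally, summing over $l\in I$ with $|l|\ll T^{1+\epsilon}$ and writing $l=d\ell$ with $d=(\Re l,\Im l)$,
\[
\sum_{|l|\ll T^{1+\epsilon}}\frac{(\Re l,\Im l)}{|l|^{2-4\theta}}\ll\sum_{d\le T^{1+\epsilon}}d^{4\theta-1}\sum_{|\ell|\ll T^{1+\epsilon}/d}|\ell|^{4\theta-2}\ll\sum_{d\le T^{1+\epsilon}}d^{4\theta-1}\Bigl(\frac{T^{1+\epsilon}}{d}\Bigr)^{4\theta}\ll T^{4\theta+\epsilon},
\]
using $\sum_{|\ell|\ll Y}|\ell|^{4\theta-2}\ll Y^{4\theta}$ (the exponent $4\theta>0$). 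Combining, $\sum_{|l|\ll T^{1+\epsilon}}S_3(l)/|l|^2\ll T^{2+\epsilon}\cdot T^{4\theta+\epsilon}=T^{2+4\theta+\epsilon}$.

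All the analytic content being already packaged in Lemmas~\ref{lemma I estimate ynear1} and \ref{lemma I estimate main}, the argument is essentially bookkeeping; the step I expect to require the most care is the last one, i.e.\ recognising that $S_3(l)$ is supported on $l$ whose coordinates share a large common factor and that $\sum_{|l|\ll T^{1+\epsilon}}(\Re l,\Im l)\,|l|^{4\theta-2}$ is only $T^{\epsilon}$ larger than $\sum_{|l|\ll T^{1+\epsilon}}|l|^{4\theta-2}$. Estimating the number of admissible $n$ crudely by $g\ll|l|$ and not exploiting this gcd cancellation would cost a full factor of $T$.
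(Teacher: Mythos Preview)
Your proof is correct and follows essentially the same approach as the paper's: parametrise $N_3(l,T)$ by $n_j=2lj/(a,b)$ with $y_{n_j}=j/(a,b)$, use $1-y_{n_j}\ge 1/(a,b)\gg T^{-1-\epsilon}$ to invoke Lemmas~\ref{lemma I estimate ynear1} and \ref{lemma I estimate main}, bound the $y$-integral by $T^{2+\epsilon}(1-y_{n_j})^{-1/2}$, and then sum $\sum_{a^2+b^2\ll T^{2+\epsilon}}(a,b)(a^2+b^2)^{2\theta-1}\ll T^{4\theta+\epsilon}$. The only cosmetic differences are that the paper works with $1-y_{n_j}$ rather than $1-y_{n_j}^2$ and performs the final gcd-extraction directly on $2l=a+ib$ rather than on $l$; both yield the same bound.
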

\begin{proof}
  
According to \eqref{sn def} the condition $s_n=0$ is equivalent to $ad-bc=0.$ The set of solutions of this linear equation is given by
\begin{equation}\label{systemsolutions2}
c_j=\frac{aj}{(a,b)},\quad
d_j=\frac{bj}{(a,b)}, \quad j\in\Z,\quad n_j=c_j+id_j.
\end{equation}
In this case, \eqref{f-to ynsn} can be rewritten as
\begin{equation}\label{x est1}
x_{-}(n_j/l,y)=\frac{(y-y_{n_j})^2}{1-y^2}, \quad
f_{-}(n_j/l,y)=(y-y_{n_j})^2.
\end{equation}
By  \eqref{yn def}  we have
\begin{equation}\label{ynj est1}
y_{n_j}=\frac{j}{(a,b)}, \quad 0\le j<(a,b)\Rightarrow
1-y_{n_j}\gg\frac{1}{T^{1+\epsilon}}.
\end{equation}
Applying Lemma \ref{lemma I estimate ynear1} and using \eqref{ynj est1} to estimate $f_{-}(n_j/l,y)$ from below, we obtain
\begin{equation*}
\int_{1-T^{-B}}^{1}
J_{-}\left(\frac{n_j}{l},y,T\right)\frac{dy}{(1-y^2)^{1/2}}\ll T^{-A}
\end{equation*}
if $B$ is sufficiently large. To estimate the remaining integral over $0<y<1-T^{-B}$ we decompose it into two parts. The first part is over
$$Y_1=\left\{y\,\Biggl|\,0<y<1-\frac{1}{T^{B}},\,\frac{(y-y_{n_j})^2}{1-y}\gg T^{-2+\epsilon}\right\}$$
and the second one is over
$$Y_2=\left\{y\,\Biggl|\,0<y<1-\frac{1}{T^{B}},\,\frac{(y-y_{n_j})^2}{1-y}\ll T^{-2+\epsilon}\right\}.$$
The integral over $Y_1$ is negligible due to \eqref{x est1} and \eqref{I integral estimate2}.  Using \eqref{I integral estimate3}, the integral over $Y_2$ can be bounded by
\begin{equation}\label{J est1}
\int_{Y_2}\frac{T^3dy}{1-y}\ll
\int_{|t|\ll T^{-1+\epsilon}\sqrt{1-y_{n_j}}}\frac{T^3dt}{1-y_{n_j}-t}\ll
\frac{T^{2+\epsilon}}{\sqrt{1-y_{n_j}}},
\end{equation}
where the estimate \eqref{ynj est1} was used. It remains to sum \eqref{J est1} over $n_j$ and over $|l|\ll T^{1+\epsilon}$  (see \eqref{decomposition 2}). As a result,
\begin{multline}\label{S(l) estimate2}
\sum_{|l|\ll T^{1+\epsilon}}\frac{S_{3}(l)}{|l|^2}\ll 
\sum_{|l|\ll T^{1+\epsilon}}
\sum_{j=1}^{(a,b)-1}\frac{|n_j^2-4l^2|^{2\theta}}{|l|^2}\frac{T^{2+\epsilon}}{\sqrt{1-y_{n_j}}}\\\ll
T^{2+\epsilon}\sum_{|l|\ll T^{1+\epsilon}}|l|^{4\theta-2}
\sum_{j=1}^{(a,b)-1}\left(1-\frac{j}{(a,b)}\right)^{2\theta-1/2}\\\ll
T^{2+\epsilon}\sum_{a^2+b^2\ll T^{2+\epsilon}}(a,b)(a^2+b^2)^{2\theta-1}
\ll T^{2+4\theta+\epsilon}.
\end{multline}
\end{proof}

\begin{lem}\label{lemma S4 est}
The following estimate holds
\begin{equation}\label{S4 est}
\sum_{|l|\ll T^{1+\epsilon}}\frac{S_{4}(l)}{|l|^2}\ll T^{3+4\theta+\epsilon}.
\end{equation}
\end{lem}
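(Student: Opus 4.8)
The plan is to estimate $S_4(l)$, which collects the terms $n\in N_4(l,T)$ where $y_n<1-\epsilon_1$ and $0<|s_n|\ll T^{-1+\epsilon}$. In this range the denominator $1-y^2$ is bounded below by a fixed constant whenever $y$ is near $y_n$, so there are no issues coming from $y\to 1$; the integral over $1-T^{-B}<y<1$ is negligible by Lemma~\ref{lemma I estimate ynear1} because $f_-(n/l,y)=(1-y)^2+s_n^2\gg\epsilon_1^2$ there, and the integral over $y<1-T^{-B}$ with $x_-(n/l,y)\gg T^{-2+\epsilon}$ is negligible by \eqref{I integral estimate2}. So the whole contribution comes from $y$ with $(y-y_n)^2+s_n^2\ll T^{-2+\epsilon}$, and by Lemma~\ref{lemma I estimate main}, \eqref{I integral estimate3}, on that set $J_-(n/l,y,T)\ll T^{3+\epsilon}/(1-y^2)^{1/2}\ll T^{3+\epsilon}$ since $1-y^2\gg\epsilon_1$. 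First I would bound the $y$-integral: the set of admissible $y$ has length $\ll\sqrt{T^{-2+\epsilon}-s_n^2}\ll T^{-1+\epsilon}$ (and is empty unless $|s_n|\ll T^{-1+\epsilon}$, which is already imposed), giving
\begin{equation*}
\left|\int_0^1 J_-\left(\frac{n}{l},y,T\right)\frac{dy}{(1-y^2)^{1/2}}\right|\ll T^{3+\epsilon}\cdot T^{-1+\epsilon}= T^{2+\epsilon}.
\end{equation*}

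Next I would count the relevant $n$. Writing $2l=a+ib$, $n=c+id$ as in \eqref{2l,n abcd}, the condition $0<|s_n|\ll T^{-1+\epsilon}$ means $0<|ad-bc|\ll(a^2+b^2)T^{-1+\epsilon}\ll T^{1+\epsilon}$, so for each integer $k$ with $1\le|k|\ll T^{1+\epsilon}$ we must solve $ad-bc=k$. For fixed $k$ divisible by $(a,b)$ this is a line of solutions $(c,d)$ in $\Z^2$ parametrized by one integer, and intersecting with $|n|\ll T^{\epsilon}|l|$ gives $\ll T^{\epsilon}$ admissible $n$ on that line (the spacing along the line being $\sqrt{a^2+b^2}/(a,b)\gg 1$). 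Hence $|N_4(l,T)|\ll T^{1+\epsilon}/(a,b)\cdot T^{\epsilon}\cdot(a,b)=T^{1+\epsilon}$ — more carefully, the number of valid $k$ that are multiples of $(a,b)$ is $\ll T^{1+\epsilon}/(a,b)$, and each contributes $\ll T^{\epsilon}$ values of $n$, so $|N_4(l,T)|\ll T^{1+\epsilon}/(a,b)$. Since $|n^2-4l^2|^{2\theta}\ll|l|^{4\theta+\epsilon}$ on this range, we get
\begin{equation*}
S_4(l)\ll |l|^{4\theta+\epsilon}\cdot\frac{T^{1+\epsilon}}{(a,b)}\cdot T^{2+\epsilon}=\frac{|l|^{4\theta}T^{3+\epsilon}}{(a,b)}.
\end{equation*}

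Finally I would sum over $l$. Using $|l|^2\asymp a^2+b^2$ and $|l|^{4\theta-2}\ll (a^2+b^2)^{2\theta-1}$,
\begin{equation*}
\sum_{|l|\ll T^{1+\epsilon}}\frac{S_4(l)}{|l|^2}\ll T^{3+\epsilon}\sum_{a^2+b^2\ll T^{2+\epsilon}}\frac{(a^2+b^2)^{2\theta-1}}{(a,b)}\ll T^{3+\epsilon}\sum_{a^2+b^2\ll T^{2+\epsilon}}(a^2+b^2)^{2\theta-1}.
\end{equation*}
The last sum over lattice points in a disc of radius $R=T^{1+\epsilon/2}$ is $\ll R^{4\theta}=T^{4\theta+\epsilon}$ (since $2\theta-1<0$, comparing with $\int_1^R \rho^{4\theta-2}\cdot\rho\,d\rho\asymp R^{4\theta}$), which yields the desired bound $\sum_{|l|\ll T^{1+\epsilon}}S_4(l)/|l|^2\ll T^{3+4\theta+\epsilon}$. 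The main obstacle — and the step deserving the most care — is the counting of $N_4(l,T)$: one must verify that fixing the small nonzero value of $ad-bc$ really does confine $n$ to a single line with spacing $\gg 1$ and that the constraint $|n|\ll T^{\epsilon}|l|$ together with $|s_n|\ll T^{-1+\epsilon}$ does not secretly allow more solutions, so that the count is genuinely $O(T^{1+\epsilon}/(a,b))$ rather than larger; everything else is a routine combination of Lemmas~\ref{lemma I estimate ynear1} and~\ref{lemma I estimate main} with the divisor-type sum over Gaussian integers.
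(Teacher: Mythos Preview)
Your argument is correct and follows the paper's approach for the analytic part (Lemmas~\ref{lemma I estimate ynear1} and~\ref{lemma I estimate main} used exactly as in the paper), differing only in how you count $N_4(l,T)$: the paper bounds $|\Omega_{a,b}|$ by comparison with an integral after rotating coordinates, obtaining $|N_4(l,T)|\ll |2l|^2T^{-1+\epsilon}$, while you group solutions by the value $k=ad-bc$.

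One slip worth noting: your ``more careful'' refinement to $|N_4(l,T)|\ll T^{1+\epsilon}/(a,b)$ is not justified. The spacing along the line $ad-bc=k$ is indeed $\sqrt{a^2+b^2}/(a,b)$, but the intersection of that line with the disc $|n|\ll T^{\epsilon}|l|$ has length $\ll T^{\epsilon}\sqrt{a^2+b^2}$, so each line carries $\ll T^{\epsilon}(a,b)$ points, not $\ll T^{\epsilon}$. Hence your first, coarser count $|N_4(l,T)|\ll T^{1+\epsilon}$ is the one that is actually valid. Fortunately this does no harm: in your final display you immediately discard the factor $1/(a,b)$ anyway, and the remaining lattice sum $\sum_{a^2+b^2\ll T^{2+\epsilon}}(a^2+b^2)^{2\theta-1}\ll T^{4\theta+\epsilon}$ gives the desired bound. (Also a typo: near $y=1$ you wrote $f_-(n/l,y)=(1-y)^2+s_n^2$; the correct expression is $(y-y_n)^2+s_n^2$, though your conclusion $f_-\gg\epsilon_1^2$ still holds since $y_n<1-\epsilon_1$.)
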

\begin{proof}
Since  $y_n<1-\epsilon_1$ for some $\epsilon_1>0$ it follows from  \eqref{f-to ynsn} that  for $1-T^{-B}<y<1$   we have $f_{-}(n/l,y)\gg \epsilon_1^2$. Therefore,
by  Lemma \ref{lemma I estimate ynear1}
\begin{equation*}
\int_{1-T^{-B}}^{1}
J_{-}\left(\frac{n}{l},y,T\right)\frac{dy}{(1-y^2)^{1/2}}\ll T^{-A}.
\end{equation*}
To estimate the remaining integral over $0<y<1-T^{-B}$ we decompose it into two parts. The first part is over $y$ such that $x_{-}(n/l,y)\gg T^{-2+\epsilon}$ and the second one is over
$$x_{-}(n/l,y)=\frac{(y-y_n)^2+s_n^2}{1-y^2}\ll T^{-2+\epsilon}.$$
The first part is negligible  due to
\eqref{I integral estimate2}.   Using \eqref{I integral estimate3} to estimate the second part and enlarging it to the domain $|y-y_n|\ll T^{-1+\epsilon}$  we obtain
\begin{equation}\label{J est2}
\int_{|y-y_n|\ll T^{-1+\epsilon}}\frac{T^3dy}{1-y}\ll T^{2+\epsilon}.
\end{equation}
Next, we sum \eqref{J est2} over $n$ and over $|l|\ll T^{1+\epsilon}$  (see \eqref{decomposition 2}).
Since $y_n<1-\epsilon_1$ and $0<|s_n|\ll T^{-1+\epsilon}$ it follows from  \eqref{yn def} and  \eqref{sn def} that $|n|<|2l|(1-\epsilon)$. Therefore,
\begin{equation}\label{S(l) estimate3}
\sum_{|l|\ll T^{1+\epsilon}}\frac{S_{4}(l)}{|l|^2}\ll \sum_{|l|\ll T^{1+\epsilon}}
\sum_{\substack{|n|<|2l|(1-\epsilon)\\0<|s_n|\ll T^{-1+\epsilon}}}
\frac{|n^2-4l^2|^{2\theta}}{|l|^2}T^{2+\epsilon}\ll
T^{2+\epsilon}\sum_{|l|\ll T^{1+\epsilon}}|l|^{4\theta-2}
\sum_{(c,d)\in\Omega_{a,b}}1,
\end{equation}
where (see \eqref{2l,n abcd} and \eqref{sn def})
\begin{equation*}
\Omega_{a,b}=\left\{(c,d)\Biggl|
c^2+d^2<(a^2+b^2)(1-\epsilon),
|ad-bc|<(a^2+b^2)T^{-1+\epsilon}
\right\}.
\end{equation*}
The sum over $(c,d)\in\Omega_{a,b}$ can be estimated  by the double integral over $\Omega_{a,b}$. To evaluate this integral we make the change of variables
\begin{equation*}
c=x\cos\vartheta_l-y\sin\vartheta_l,\quad
d=x\sin\vartheta_l+y\cos\vartheta_l, \quad \vartheta_l=\arg(2l),
\end{equation*}
and obtain
\begin{equation}\label{cd sum est1}
\sum_{(c,d)\in\Omega_{a,b}}1\ll
\iint_{\Omega_{|2l|}}1dxdy\ll|2l|^2T^{-1+\epsilon},
\end{equation}
where
\begin{equation*}
\Omega_{|2l|}=\left\{(x,y)\Biggl|
x^2+y^2<(1-\epsilon)|2l|^2,\,
|y|<|2l|T^{-1+\epsilon}
\right\}.
\end{equation*}
Substituting \eqref{cd sum est1} to \eqref{S(l) estimate3}, we infer
\begin{equation}\label{S(l) estimate4}
\sum_{|l|\ll T^{1+\epsilon}}\frac{S_{4}(l)}{|l|^2}\ll
T^{1+\epsilon}\sum_{|l|\ll T^{1+\epsilon}}|l|^{4\theta}\ll
T^{3+4\theta+\epsilon}.
\end{equation}
\end{proof}

\begin{lem}\label{lemma S5 est}
The following estimate holds
\begin{equation}\label{S5 est}
\sum_{|l|\ll T^{1+\epsilon}}\frac{S_{5}(l)}{|l|^2}\ll T^{-A}.
\end{equation}
\end{lem}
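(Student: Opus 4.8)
The plan is to show that the set $N_5(l,T)$ is actually empty for every $l$ with $|l|\ll T^{1+\epsilon}$, so that the sum vanishes identically (up to a negligible $O(T^{-A})$ coming from the usual truncation of the $y$-integral near $1$). Recall that $n\in N_5(l,T)$ requires $1-T^{-2+\epsilon}<y_n<1$ together with $0<|s_n|\ll T^{-1+\epsilon}$, where $y_n=(ac+bd)/(a^2+b^2)$ and $s_n=(ad-bc)/(a^2+b^2)$ with $2l=a+ib$, $n=c+id$ as in \eqref{2l,n abcd}. First I would observe that $|2l|^2=a^2+b^2\ll T^{2+\epsilon}$, so the two rational numbers $y_n$ and $s_n$ have a common denominator of size at most $T^{2+\epsilon}$. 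Hence either $y_n=1$, or $|y_n-1|\gg T^{-2-\epsilon}$ (this is essentially \eqref{ynsn notnear1}). The first alternative is excluded by the strict inequality $y_n<1$ in the definition of $N_5$, so the window $1-T^{-2+\epsilon}<y_n<1$ forces $T^{-2-\epsilon}\ll |y_n-1|\ll T^{-2+\epsilon}$; the point is that this leaves essentially no room, and in fact the key numerics below will show it is vacuous.

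The main step is a counting argument. From $|y_n-1|\ll T^{-2+\epsilon}$ we get $|ac+bd-(a^2+b^2)|\ll (a^2+b^2)T^{-2+\epsilon}\ll T^{\epsilon}$, i.e. $ac+bd$ lies within $O(T^{\epsilon})$ of $a^2+b^2$. Combined with $|s_n|\ll T^{-1+\epsilon}$, i.e. $|ad-bc|\ll (a^2+b^2)T^{-1+\epsilon}\ll T^{1+\epsilon}$, we are counting lattice points $(c,d)$ in a box in the rotated coordinates $(u,v)=(\Re(n\overline{2l}),\Im(n\overline{2l}))/1$: the conditions read $|u-|2l|^2|\ll T^{\epsilon}$ and $|v|\ll T^{1+\epsilon}$. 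Since the map $(c,d)\mapsto (u,v)=(ac+bd,\,ad-bc)$ is multiplication by $2\bar l$ and hence has Jacobian $|2l|^2=a^2+b^2$, the number of such $(c,d)$ is
\begin{equation*}
\#N_5(l,T)\ll 1+\frac{T^{\epsilon}\cdot T^{1+\epsilon}}{|2l|^2}\ll 1+\frac{T^{1+\epsilon}}{|2l|^2}.
\end{equation*}
For each such $n$ I would then bound the inner integral: by Lemma \ref{lemma I estimate ynear1} the part over $1-T^{-B}<y<1$ is $O(T^{-A})$ provided $f_{-}(n/l,y)=(y-y_n)^2+s_n^2\gg T^{-a}$ for a suitable $a<B-5$, which holds here since $s_n\neq0$ and has common denominator $\le T^{2+\epsilon}$, giving $|s_n|\gg T^{-2-\epsilon}$ and hence $f_{-}\gg T^{-4-\epsilon}$; taking $B$ large enough makes this contribution negligible. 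For $y<1-T^{-B}$, the part with $x_{-}(n/l,y)\gg T^{-2+\epsilon}$ is negligible by \eqref{I integral estimate2}, while the part with $x_{-}(n/l,y)\ll T^{-2+\epsilon}$ forces $(y-y_n)^2\ll (1-y^2)T^{-2+\epsilon}$; since $y_n$ is within $T^{-2+\epsilon}$ of $1$ this window has length $\ll T^{-1+\epsilon}\sqrt{1-y}$ and, using \eqref{I integral estimate3}, contributes
\begin{equation*}
\int \frac{T^{3+\epsilon}\,dy}{1-y}\ll \frac{T^{2+\epsilon}}{\sqrt{1-y_n}}\ll T^{2+\epsilon}\cdot T^{1}=T^{3+\epsilon},
\end{equation*}
since $1-y_n\gg T^{-2-\epsilon}$.

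Putting this together, I would estimate
\begin{equation*}
\sum_{|l|\ll T^{1+\epsilon}}\frac{S_5(l)}{|l|^2}\ll \sum_{|l|\ll T^{1+\epsilon}}\frac{1}{|l|^2}\Bigl(1+\frac{T^{1+\epsilon}}{|l|^2}\Bigr)|l|^{4\theta}\,T^{3+\epsilon}+T^{-A},
\end{equation*}
and here the bound obtained, $T^{3+4\theta+\epsilon}\cdot(\text{something})$, is \emph{too weak}: this is where the argument must be sharpened. The real point, and the main obstacle, is that the pair of congruence-type constraints $|ac+bd-(a^2+b^2)|\ll T^{\epsilon}$ and $|ad-bc|\ll T^{1+\epsilon}$ cannot both be met by a nonzero $s_n$ unless $|2l|^2$ is itself small, so I expect $N_5(l,T)=\emptyset$ whenever $|l|\gg T^{1/2+\epsilon}$, say, leaving only the range $|l|\ll T^{1/2+\epsilon}$ where the trivial count $\#N_5\ll 1+T^{1+\epsilon}/|l|^2$ and the per-$n$ bound $T^{3+\epsilon}$ combine to give
\begin{equation*}
\sum_{|l|\ll T^{1/2+\epsilon}}\frac{|l|^{4\theta}}{|l|^2}\cdot\frac{T^{1+\epsilon}}{|l|^2}\cdot T^{3+\epsilon}\ll T^{4+\epsilon}\sum_{|l|\ll T^{1/2+\epsilon}}|l|^{4\theta-4}\ll T^{4+\epsilon},
\end{equation*}
which is still not $O(T^{-A})$. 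Thus the crucial sharpening I would pursue is to show that for $n\in N_5(l,T)$ the hypergeometric integral is in fact \emph{always} negligible — either because $s_n\neq0$ with denominator bounded by $|2l|^2$ forces $x_-(n/l,y)\gg T^{-2+\epsilon}$ for all relevant $y$ once the window $|y-y_n|$ is measured correctly against $1-y\asymp 1-y_n\gg T^{-2-\epsilon}$ (so that $x_-\gg s_n^2/(1-y_n)\gg T^{-4-\epsilon}/T^{-2+\epsilon}$, which is $\gg T^{-2+\epsilon}$ only marginally and needs the precise exponents), or because the simultaneous smallness of $|y_n-1|$ and positivity of $|s_n|$ with a common denominator $\le T^{2+\epsilon}$ is genuinely impossible. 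Verifying this Diophantine impossibility — i.e. that $y_n$ can be that close to $1$ only when $s_n=0$, in which case $n\in N_3(l,T)$ and we are not in $N_5$ — is the heart of the proof and the step I expect to be the main obstacle; once it is established, $S_5(l)\ll T^{-A}$ term by term and \eqref{S5 est} follows immediately.
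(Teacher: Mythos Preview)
Your proposal has the right scaffolding but is missing the one substantive step. The lattice-point count combined with the per-$n$ bound $T^{2+\epsilon}/\sqrt{1-y_n}\ll T^{3+\epsilon}$ can never produce $O(T^{-A})$, as you yourself observe. You then correctly guess that for every $n\in N_5(l,T)$ one should have $x_-(n/l,y)\gg T^{-2+\epsilon}$ for all $y$, making each integral negligible by \eqref{I integral estimate2}; but you do not prove it. The trivial bound $|s_n|\gg T^{-2-\epsilon}$ falls just short (as you compute), and your alternative conjecture that $N_5(l,T)$ is empty is false: for instance $2l=T+i$, $n=T$ gives $1-y_n=1/(T^2+1)$ and $|s_n|=T/(T^2+1)\sim T^{-1}$, so this $n$ lies in $N_5(l,T)$.

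The missing ingredient is the sharp lower bound $|s_n|\gg T^{-1-\epsilon}$ for every $n\in N_5(l,T)$. The paper obtains it as follows. Set $c_1=a-c$, $d_1=b-d$; then $ac_1+bd_1=(a^2+b^2)(1-y_n)$ is a positive integer bounded by $(a^2+b^2)T^{-2+\epsilon}\ll T^{\epsilon}$, which already forces $a^2+b^2>T^{2-\epsilon}$. A short case analysis on the signs of $c_1,d_1$ (using $a,b\ge0$ and $0<ac_1+bd_1\ll T^{\epsilon}$) then yields $|s_n|\gg T^{-1-\epsilon}$: if $c_1d_1\ge0$ the constraint forces one of $a,b$ to be $\ll T^{\epsilon}$ and the corresponding $c_1$ or $d_1$ to vanish, giving $|s_n|\gg1/\max(a,b)$; if $c_1d_1<0$, say $c_1>0>d_1$, then $ac_1+bd_1>0$ gives $c_1>b(-d_1)/a$ and hence $bc_1-ad_1>(-d_1)(a^2+b^2)/a$, again $|s_n|\gg T^{-1-\epsilon}$. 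With $|s_n|\gg T^{-1-\epsilon}\gg 1-y_n$ in hand, minimising $\bigl((y-y_n)^2+s_n^2\bigr)/(1-y^2)$ over $0<y<1$ gives $x_-(n/l,y)\gg|s_n|\gg T^{-1-\epsilon}$ throughout, and \eqref{I integral estimate2} makes every term $O(T^{-A})$.
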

\begin{proof}
 Since
$s_n\gg T^{-2-\epsilon}$ using  \eqref{f-to ynsn} we have $f_{-}(n/l,y)\gg T^{-4-\epsilon}$. Therefore,
by  Lemma \ref{lemma I estimate ynear1} 
\begin{equation*}
\int_{1-T^{-B}}^{1}
J_{-}\left(\frac{n}{l},y,T\right)\frac{dy}{(1-y^2)^{1/2}}\ll T^{-A}.
\end{equation*}

Next, we are going to show that $s_n$ cannot be very small if $y_n$ is close to $1$, namely
\begin{equation}\label{|1-yn| small sn big}
1-T^{-2+\epsilon}<y_n<1\Rightarrow |s_n|\gg T^{-1-\epsilon}.
\end{equation}
Using  \eqref{yn def} and the fact that $0<1-y_n<T^{-2+\epsilon}$, we obtain
\begin{equation*}
0<\frac{a(a-c)+b(b-d)}{a^2+b^2}<T^{-2+\epsilon}.
\end{equation*}
This implies that $a^2+b^2>T^{2-\epsilon}.$ Furthermore, $a^2+b^2=|2l|^2\ll T^{2+\epsilon}$. Let $c_1=a-c$, $d_1=b-d$. Then it follows from the condition $1-T^{-2+\epsilon}<y_n<1$ that
\begin{equation}\label{system ab1}
T^{2-\epsilon}<a^2+b^2\ll T^{2+\epsilon},\quad 0<ac_1+bd_1\ll T^{\epsilon}.
\end{equation}
According to \eqref{sn def} 
$$s_n=\frac{bc_1-ad_1}{a^2+b^2}.$$
Note that since we assumed that $l$ belongs to the first quadrant, we have $a,b\ge0.$

Let $c_1d_1\ge0$. Thus  $c_1,d_1\ge0$  since $ac_1+bd_1>0.$  Suppose that both $a$ and $b$ are larger than
$T^{\epsilon}$. Since $ac_1+bd_1\ll T^{\epsilon}$ this implies that $c_1=d_1=0$, which is impossible since $n\neq2l.$ 
Therefore, without loss of generality, we assume that $a\gg T^{\epsilon}$ and $b\ll T^{\epsilon}$. It follows from the first double inequality in \eqref{system ab1} that  $T^{1-\epsilon}\ll a\ll T^{1+\epsilon}.$ Hence $c_1$ can only be $0$ (since $ac_1+bd_1\ll T^{\epsilon}$). Therefore,
$$|s_n|=\frac{ad_1}{a^2+b^2}\gg\frac{1}{a}\gg T^{-1-\epsilon}.$$

Let $c_1d_1\le0$. Suppose that $c_1\ge0$ and $d_1\le0$ (the other case can be treated similarly).
It follows from the second double inequality in \eqref{system ab1} that $c_1>b(-d_1)/a$. Therefore,
$$s_n>\frac{-d_1}{a}\gg T^{-1-\epsilon}.$$

Since $1-T^{-2+\epsilon}<y_n<1$ and $|s_n|\gg T^{-1-\epsilon}$ we conclude that for all $0<y<1$ we have
\begin{equation*}
x_{-}(n/l,y)=\frac{(y-y_n)^2+s_n^2}{1-y^2}\gg\frac{T^{\epsilon}}{T^2}.
\end{equation*}
Therefore, we can apply  \eqref{I integral estimate3} showing that the contribution of this case to $S_{5}(l)$ is negligibly small.

\end{proof}

\begin{lem}\label{lemma S6 est}
The following estimate holds
\begin{equation}\label{S6 est}
\sum_{|l|\ll T^{1+\epsilon}}\frac{S_{6}(l)}{|l|^2}\ll T^{3+4\theta+\epsilon}.
\end{equation}
\end{lem}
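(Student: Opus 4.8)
The plan is to treat $S_6(l)$, i.e.\ the range $N_6(l,T) = \{n : 1-\epsilon_1 < y_n < 1-T^{-2+\epsilon},\ 0 < |s_n| \ll T^{-1+\epsilon}\}$, in close analogy with the proof of Lemma~\ref{lemma S4 est}, the essential new feature being that $y_n$ is now permitted to be very close to $1$ (but still bounded away from $1$ by at least $T^{-2+\epsilon}$), so the factor $(1-y^2)^{-1/2}$ in the $y$-integral is no longer harmless and must be handled carefully. First I would dispose of the tail $1-T^{-B} < y < 1$: since $s_n \neq 0$ here one has, by the same argument as in Lemma~\ref{lemma S5 est}, that $|s_n| \gg T^{-2+\epsilon}$ cannot happen for $y_n$ in this range unless $a^2+b^2$ is large, but more simply $f_{-}(n/l,y) = (y-y_n)^2 + s_n^2 \gg (1-y_n)^2 \gg T^{-4+\epsilon}$ on that tail, so Lemma~\ref{lemma I estimate ynear1} (with suitably large $B$) gives a negligible contribution. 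It then remains to bound the integral over $0 < y < 1-T^{-B}$.

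On $0 < y < 1-T^{-B}$ I would split according to the size of $x_{-}(n/l,y)$: where $x_{-}(n/l,y) \gg T^{-2+\epsilon}$ the contribution is negligible by \eqref{I integral estimate2}, while on the complementary set, where $x_{-}(n/l,y) = ((y-y_n)^2 + s_n^2)/(1-y^2) \ll T^{-2+\epsilon}$, I apply \eqref{I integral estimate3}. On this latter set one has $|y - y_n| \ll T^{-1+\epsilon}\sqrt{1-y^2}$, and since $y$ is forced to lie near $y_n$ while $1 - y_n > T^{-2+\epsilon}$, we also have $1 - y \gg 1 - y_n$. Hence the inner $y$-integral is bounded by
\begin{equation*}
\int_{|y-y_n| \ll T^{-1+\epsilon}\sqrt{1-y_n}} \frac{T^{3+\epsilon}}{1-y}\,dy
\ll \frac{T^{3+\epsilon}}{1-y_n}\cdot T^{-1+\epsilon}\sqrt{1-y_n}
= \frac{T^{2+\epsilon}}{\sqrt{1-y_n}},
\end{equation*}
which is exactly the bound \eqref{J est1} encountered in Lemma~\ref{lemma S3 est}. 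The point is that even though $y_n$ can be extremely close to $1$, the measure of the relevant $y$-set shrinks like $\sqrt{1-y_n}$, so only a factor $(1-y_n)^{-1/2}$ survives.

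Next I would sum this bound over $n \in N_6(l,T)$ and over $l$. Parametrising as in \eqref{2l,n abcd}, the constraints $1-\epsilon_1 < y_n < 1$ and $|s_n| \ll T^{-1+\epsilon}$ confine $(c,d)$ to a thin region; writing $c_1 = a-c$, $d_1 = b-d$, the condition $1 - y_n = (ac_1+bd_1)/(a^2+b^2)$ together with $s_n = (bc_1 - ad_1)/(a^2+b^2)$ shows that $(c_1,d_1)$ — equivalently, after rotating by $\vartheta_l = \arg(2l)$ — is constrained to a box of dimensions roughly $|2l|^2(1-y_n) \times |2l|^2 T^{-1+\epsilon}$ divided by $|2l|$, i.e.\ a box of area $\ll |2l|^2 (1-y_n) T^{-1+\epsilon}$ in the rotated coordinates. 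Grouping the $n$'s by dyadic size of $1 - y_n \in (T^{-2+\epsilon}, \epsilon_1)$, the number of $n$ with $1-y_n \asymp \delta$ is $\ll |2l|^2 \delta T^{-1+\epsilon}$, each contributing $\ll T^{2+\epsilon} \delta^{-1/2} |l|^{-2} |n^2 - 4l^2|^{2\theta} \ll T^{2+\epsilon}\delta^{-1/2}|l|^{4\theta - 2}$; hence the $\delta$-block contributes $\ll T^{1+\epsilon}|l|^{4\theta}\delta^{1/2}$, and summing over the $O(\log T)$ dyadic values of $\delta \le \epsilon_1$ gives $\ll T^{1+\epsilon}|l|^{4\theta}$. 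Finally $\sum_{|l| \ll T^{1+\epsilon}} |l|^{4\theta - 2}\cdot T^{1+\epsilon}|l|^{2} \ll T^{1+\epsilon}\sum_{|l|\ll T^{1+\epsilon}}|l|^{4\theta} \ll T^{3 + 4\theta + \epsilon}$, which is \eqref{S6 est}.

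The main obstacle I anticipate is the bookkeeping in the last step: one must verify that the lattice-point count for $n \in N_6(l,T)$ with $1 - y_n \asymp \delta$ is genuinely $\ll |2l|^2 \delta T^{-1+\epsilon}$ (in particular that the linear forms $ac_1 + bd_1$ and $bc_1 - ad_1$ are, up to the orthogonal change of variables, independent and scale correctly), and that the sum over the dyadic blocks in $\delta$ converges at the top end $\delta \asymp \epsilon_1$ rather than at the bottom — which is what makes the $(1-y_n)^{-1/2}$ singularity integrable after summation. Once this is set up, the $l$-sum is identical to the one in \eqref{S(l) estimate4}.
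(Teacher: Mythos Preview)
Your proposal is correct and follows essentially the same route as the paper's proof: dispose of the tail $y>1-T^{-B}$ via Lemma~\ref{lemma I estimate ynear1}, split the remaining $y$-integral according to the size of $x_{-}$, obtain the bound $T^{2+\epsilon}(1-y_n)^{-1/2}$ from \eqref{I integral estimate3} on the surviving short interval, then parametrise by $c_1=a-c$, $d_1=b-d$ and exploit the thinness in the $s_n$-direction after rotating by $\vartheta_l$. The only cosmetic difference is that the paper estimates the $(c_1,d_1)$-sum directly by the double integral \eqref{c1d1 sum to int}--\eqref{c1d1 sum to int2}, whereas you achieve the same bound via a dyadic decomposition in $\delta=1-y_n$; your flagged concern about the lattice-point count is legitimate but harmless, since the boundary contributions (perimeter and $O(1)$ terms) from the rotated box, when summed over dyadic $\delta\in[T^{-2+\epsilon},\epsilon_1]$ and then over $l$, also stay within $T^{3+4\theta+\epsilon}$.
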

\begin{proof}
In this setting $1-\epsilon_1<y_n<1-T^{-2+\epsilon}$ and $0<|s_n|\ll T^{-1+\epsilon}$.
Here in contrast to the case when $y_n<1-\epsilon_1$ it is required to be more accurate in estimating the integral in \eqref{J est2}. In particular, $(1-y)\sim(1-y_n)$ can be very small (like $T^{-2-\epsilon}$).  Nevertheless, due to \eqref{ynsn notnear1} and since
$s_n\gg T^{-2-\epsilon}$ we have $f_{-}(n/l,y)\gg T^{-4-\epsilon}$ for $1-T^{-B}<y<1$  (see \eqref{f-to ynsn}). Therefore,
by  Lemma \ref{lemma I estimate ynear1}
\begin{equation*}
\int_{1-T^{-B}}^{1}
J_{-}\left(\frac{n}{l},y,T\right)\frac{dy}{(1-y^2)^{1/2}}\ll T^{-A}.
\end{equation*}
To estimate the remaining integral over $0<y<1-T^{-B}$ we decompose it into two parts. The first part is over
$$Y_1=\left\{y\,\Biggl|\,0<y<1-\frac{1}{T^{B}},\,\frac{(y-y_{n})^2+s_n^2}{1-y}\gg T^{-2+\epsilon}\right\}$$
and the second one is over
$$Y_2=\left\{y\,\Biggl|\,0<y<1-\frac{1}{T^{B}},\,\frac{(y-y_{n})^2+s_n^2}{1-y}\ll T^{-2+\epsilon}\right\}.$$
The integral over $Y_1$ is negligible due to \eqref{x est1} and \eqref{I integral estimate2}. To estimate the integral over $Y_2$ we first simplify the set $Y_2$. Since $1-\epsilon_1<y_n<1-T^{-2+\epsilon}$ we show that
$$Y_2\subset\left\{y\,\Biggl|\,0<y<1-\frac{1}{T^{B}},\, |y-y_{n}|\ll T^{-1+\epsilon}\sqrt{1-y_n}\right\}.$$
Applying \eqref{I integral estimate3}, we prove that the integral over $Y_2$ can be bounded by
\begin{equation}\label{J est3}
\int_{Y_2}\frac{T^3dy}{1-y}\ll
\int_{|t|\ll T^{-1+\epsilon}\sqrt{1-y_{n}}}\frac{T^3dt}{1-y_{n}-t}\ll
\frac{T^{2+\epsilon}}{\sqrt{1-y_{n}}}.
\end{equation}
It remains to sum \eqref{J est3} over $n$ and over $|l|\ll T^{1+\epsilon}$  (see \eqref{decomposition 2}).
Note that
$$|n|^2=|2l|^2(y_n^2+s_n^2)<|2l|^2(1+O(T^{-2+\epsilon})).$$
Therefore, we obtain
\begin{equation}\label{S(l) estimate5}
\sum_{|l|\ll T^{1+\epsilon}}\frac{S_{6}(l)}{|l|^2}\ll 
\sum_{|l|\ll T^{1+\epsilon}}
\sum_{\substack{|n|<|2l|(1+\epsilon),|s_n|\ll T^{-1+\epsilon}\\T^{-2+\epsilon}<1-y_n<\epsilon_1}}
\frac{|n^2-4l^2|^{2\theta}}{|l|^2}\frac{T^{2+\epsilon}}{\sqrt{1-y_{n}}}.
\end{equation}
As in the previous case, let  $c_1=a-c,\,d_1=b-d.$ Then
\begin{equation*}
1-y_n=\frac{ac_1+bd_1}{a^2+b^2},\quad s_n=\frac{bc_1-ad_1}{a^2+b^2},\quad
|2l-n|^2=c_1^2+d_1^2.
\end{equation*}
Moreover,
\begin{equation*}
|n|^2=|2l|^2+c_1^2+d_1^2-2(ac_1+bd_1).
\end{equation*}
The condition on $y_n$ in \eqref{S(l) estimate5} can be rewritten (and slightly simplified) as
\begin{equation*}
0<ac_1+bd_1\ll\epsilon_1|2l|^2.
\end{equation*}
The condition on $|n|$ in \eqref{S(l) estimate5} can be rewritten (and slightly simplified) as
\begin{equation*}
c_1^2+d_1^2\ll\epsilon|2l|^2.
\end{equation*}
So \eqref{S(l) estimate5} can be rewritten as
\begin{equation}\label{S(l) estimate6}
\sum_{|l|\ll T^{1+\epsilon}}\frac{S_{6}(l)}{|l|^2}\ll
T^{2+\epsilon}\sum_{|l|\ll T^{1+\epsilon}}|l|^{-1+2\theta}
\sum_{(c_1,d_1)\in\Omega(a,b)}\frac{(c_1^2+d_1^2)^{\theta}}{(ac_1+bd_1)^{1/2}},
\end{equation}
where
\begin{multline}\label{omega ab def}
\Omega(a,b)=\Biggl\{(c_1,d_1)\Biggl|
0<ac_1+bd_1\ll\epsilon_1|2l|^2,\quad
c_1^2+d_1^2\ll\epsilon|2l|^2,\,
|bc_1-ad_1|\ll|2l|^2T^{-1+\epsilon}
\Biggr\}.
\end{multline}
The sum over $(c_1,d_1)\in\Omega(a,b)$ can be estimated  by the double integral over $\Omega(a,b)$. To evaluate this integral we make the change of variables
\begin{equation*}
c_1=x\cos\vartheta_l-y\sin\vartheta_l,\quad
d_1=x\sin\vartheta_l+y\cos\vartheta_l, \quad \vartheta_l=\arg(2l).
\end{equation*}
Using the fact that $a=|2l|\cos\vartheta_l$ and  $b=|2l|\sin\vartheta_l$, we infer
\begin{equation}\label{c1d1 sum to int}
\sum_{(c_1,d_1)\in\Omega_{a,b}}\frac{(c_1^2+d_1^2)^{2\theta}}{(ac_1+bd_1)^{1/2}}\ll
\iint_{\Omega(|2l|)}\frac{(x^2+y^2)^{\theta}}{(x|2l|)^{1/2}}dxdy,
\end{equation}
where
\begin{equation*}
\Omega(|2l|)=\left\{(x,y)\Biggl|
x^2+y^2<\epsilon|2l|^2,\,
|y|\ll|2l|T^{-1+\epsilon},\,
x>0
\right\}.
\end{equation*}
Estimating the integrand $x^2+y^2$ by $|2l|^2$, we obtain
\begin{equation}\label{c1d1 sum to int2}
\sum_{(c_1,d_1)\in\Omega_{a,b}}\frac{(c_1^2+d_1^2)^{2\theta}}{(ac_1+bd_1)^{1/2}}\ll
|2l|^{2\theta-1/2}\int_{|y|\ll|2l|T^{-1+\epsilon}}\int_{0}^{|2l|}\frac{dxdy}{x^{1/2}}\ll
|2l|^{1+2\theta}T^{-1+\epsilon}.
\end{equation}
Substituting \eqref{c1d1 sum to int2} to \eqref{S(l) estimate6}, we conclude that
\begin{equation}\label{S(l) estimate7}
\sum_{|l|\ll T^{1+\epsilon}}\frac{S_{6}(l)}{|l|^2}\ll
T^{1+\epsilon}\sum_{|l|\ll T^{1+\epsilon}}|l|^{4\theta}\ll
T^{3+4\theta+\epsilon}.
\end{equation}
\end{proof}

\begin{lem}\label{lemma S7 est}
The following estimate holds
\begin{equation}\label{S7 est}
\sum_{|l|\ll T^{1+\epsilon}}\frac{S_{7}(l)}{|l|^2}\ll T^{-A}.
\end{equation}
\end{lem}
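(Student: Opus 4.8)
The plan is to show that in the range $N_7(l,T)$, where $y_n>1+T^{-2+\epsilon}$ and $0<|s_n|\ll T^{-1+\epsilon}$, the quantity $x_{-}(n/l,y)$ is bounded below by $T^{\epsilon-2}$ for \emph{all} $0<y<1$, so that Lemma \ref{lemma I estimate main} (estimate \eqref{I integral estimate2}) renders the contribution negligible, while simultaneously $f_{-}(n/l,y)$ is bounded below well enough for Lemma \ref{lemma I estimate ynear1} to handle the range $1-T^{-B}<y<1$. First I would use \eqref{f-to ynsn}: since $y_n>1+T^{-2+\epsilon}$ and $0\le y<1$, the numerator satisfies $(y-y_n)^2+s_n^2\ge (y_n-1)^2\gg T^{-4+\epsilon}$, and in fact $(y-y_n)^2\ge(y_n-1)^2$ because $y_n-y>y_n-1>0$. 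This alone is too weak, so the key point is to improve the lower bound on $y_n-1$ using the integrality of $a,b,c,d$.

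The main step is an arithmetic argument analogous to the one in Lemma \ref{lemma S5 est}: writing $c_1=c-a$, $d_1=d-b$, the condition $y_n>1$ becomes $ac_1+bd_1>0$, while $y_n-1=(ac_1+bd_1)/(a^2+b^2)$. Since $ac_1+bd_1$ is a positive integer, $y_n-1\ge 1/(a^2+b^2)=1/|2l|^2\gg T^{-2-\epsilon}$. Combined with the constraint $|n|\ll T^\epsilon|l|$ (so $c_1,d_1$ are not too large) and $|s_n|\ll T^{-1+\epsilon}$, I would argue exactly as in the proof of Lemma \ref{lemma S5 est} — splitting into the cases $c_1d_1\ge0$ and $c_1d_1\le0$ and exploiting $a,b\ge0$ — to show that $|s_n|\gg T^{-1-\epsilon}$ cannot be evaded: either one of $a,b$ is $\gg T^\epsilon$ forcing the corresponding $c_1$ or $d_1$ to vanish and the other to be nonzero, or a direct inequality $|s_n|>|d_1|/a\gg T^{-1-\epsilon}$ holds. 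Hence $f_{-}(n/l,y)=(y-y_n)^2+s_n^2\gg s_n^2\gg T^{-2-\epsilon}$ for all $0<y<1$, so $x_{-}(n/l,y)\gg f_{-}(n/l,y)/(1-y^2)\gg T^{-2-\epsilon}$, and more precisely $x_{-}(n/l,y)\gg s_n^2\gg T^{\epsilon-2}$ after absorbing the $1-y^2\le 1$ factor.

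With that lower bound in hand, the two remaining pieces are routine. For $1-T^{-B}<y<1$, since $f_{-}(n/l,y)\gg T^{-2-\epsilon}>T^{-a}$ with $a<B-5$ for $B$ chosen large, Lemma \ref{lemma I estimate ynear1} gives $\int_{1-T^{-B}}^1 J_{-}(n/l,y,T)(1-y^2)^{-1/2}\,dy\ll T^{(5+a-B)/2}\ll T^{-A}$. For $0<y<1-T^{-B}$, the bound $x_{-}(n/l,y)\gg T^{\epsilon-2}$ lets us apply \eqref{I integral estimate2}, so $J_{-}(n/l,y,T)\ll T^{-A}(1-y^2)^{-1/2}$; integrating over $y\in(0,1-T^{-B})$ costs only a power of $T$, and summing the resulting $T^{-A}$ over $n\in N_7(l,T)$ (at most $T^{2+\epsilon}$ terms, weighted by $|n^2-4l^2|^{2\theta}\ll T^{2+\epsilon}$) and over $|l|\ll T^{1+\epsilon}$ still leaves $O(T^{-A'})$ for a new arbitrarily large $A'$, which after renaming is \eqref{S7 est}.

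The main obstacle is the arithmetic lower bound $|s_n|\gg T^{-1-\epsilon}$ when $y_n$ is only slightly larger than $1$: naively $s_n$ could be as small as $1/|2l|^2\sim T^{-2}$, which would only give $f_{-}\gg T^{-4-\epsilon}$ and force $x_{-}$ possibly as small as $T^{-4+\epsilon}$, short of the $T^{-2+\epsilon}$ threshold needed for \eqref{I integral estimate2} to apply uniformly in $y$. The resolution is the same case analysis as in Lemma \ref{lemma S5 est}, which uses the positivity $a,b\ge0$, the integrality of $c_1,d_1$, and the size constraints $a^2+b^2\ll T^{2+\epsilon}$ together with $0<ac_1+bd_1$ (now possibly as large as $\epsilon_1|2l|^2$, not just $O(T^\epsilon)$ — this is the one genuine difference from Lemma \ref{lemma S5 est}, requiring a slightly more careful bookkeeping of which of $a,b$ dominates), to conclude that $s_n$ cannot be pathologically small without contradicting $n\ne 2l$. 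Everything downstream is then a mechanical application of the two estimation lemmas and a crude summation.
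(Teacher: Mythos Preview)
Your proposed route does not work: the arithmetic lower bound $|s_n|\gg T^{-1-\epsilon}$ that you hope to import from Lemma~\ref{lemma S5 est} is simply \emph{false} on $N_7(l,T)$. Take $2l=T+i$ (so $a=T$, $b=1$) and $n=(2T-1)+2i$. Then
\[
y_n=\frac{T(2T-1)+2}{T^2+1}=2-\frac{T}{T^2+1}\approx 2,\qquad
s_n=\frac{2T-(2T-1)}{T^2+1}=\frac{1}{T^2+1}\sim T^{-2},
\]
and $|n|/|l|\approx 4\ll T^{\epsilon}$, so $n\in N_7(l,T)$ but $|s_n|\sim T^{-2}$. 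The reason the argument of Lemma~\ref{lemma S5 est} fails to transfer is exactly the ``one genuine difference'' you flag and then dismiss: in $N_5$ the condition $0<1-y_n<T^{-2+\epsilon}$ forces $0<ac_1+bd_1\ll T^{\epsilon}$, and \emph{this smallness} is what drives the case analysis (it forces one of $c_1,d_1$ to vanish). In $N_7$ there is no upper bound on $y_n-1$, so $ac_1+bd_1$ can be as large as $|2l|^2$, and the argument collapses. There is also an exponent slip in your conclusion: even granting $|s_n|\gg T^{-1-\epsilon}$ you would only get $x_{-}\gg s_n^2\gg T^{-2-2\epsilon}$, which is \emph{below} the threshold $T^{-2+\epsilon}$ required by \eqref{I integral estimate2}.

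The paper's proof avoids $s_n$ entirely and uses only the defining inequality $y_n>1+T^{-2+\epsilon}$. Since $0<y<1<y_n$, one always has $(y_n-y)^2\ge (y_n-1)^2>T^{-4+2\epsilon}$, but more importantly $(y_n-y)>(1-y)$, and the trick is to split the $y$--range at $1-T^{-2+\epsilon/2}$. For $y>1-T^{-2+\epsilon/2}$ one divides the bound $(y_n-y)^2>T^{-4+2\epsilon}$ by $1-y^2\ll T^{-2+\epsilon/2}$ to get $x_{-}\gg T^{-2+3\epsilon/2}$; for $y<1-T^{-2+\epsilon/2}$ one uses $(y_n-y)^2/(1-y)>(1-y)>T^{-2+\epsilon/2}$. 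Either way $x_{-}\gg T^{-2+\epsilon'}$, and \eqref{I integral estimate2} applies on $(0,1-T^{-B})$. The tail $(1-T^{-B},1)$ is handled by Lemma~\ref{lemma I estimate ynear1} using only the crude bound $f_{-}\ge (y_n-1)^2+s_n^2\gg T^{-4-\epsilon}$. No arithmetic input is needed at all for this lemma.
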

\begin{proof}
Since $y_n>1+T^{-2+\epsilon}$ and $0<|s_n|\ll T^{-1+\epsilon}$ we have
\begin{equation}\label{min f-}
\min_{0<y<1}f_{-}(n/l,y)=(y_n-1)^2+s_n^2\gg T^{-4-\epsilon},
\end{equation}
see \eqref{f-to ynsn} and \eqref{ynsn notnear1}. Thus for $1-T^{-B}<y<1$ we can apply Lemma \ref{lemma I estimate ynear1} showing that
\begin{equation*}
\int_{1-T^{-B}}^{1}
J_{-}\left(\frac{n}{l},y,T\right)\frac{dy}{(1-y^2)^{1/2}}\ll T^{-A}.
\end{equation*}
To estimate the remaining integral over $0<y<1-T^{-B}$ we are going to prove that $x_{-}(n/l,y)\gg T^{-2+\epsilon}$ (according to \eqref{I integral estimate2} this would imply that the integral is negligible).
For $1-T^{-2+\epsilon/2}<y<1$ the following estimate holds
\begin{equation}\label{x- y>1 est1}
\frac{(y-y_n)^2}{1-y^2}>\frac{T^{-4+2\epsilon}}{T^{-2+\epsilon/2}}=T^{-2+3\epsilon/2}.
\end{equation}
For $0<y<1-T^{-2+\epsilon/2}$ we have
\begin{equation}\label{x- y>1 est2}
\frac{(y-y_n)^2}{1-y^2}>\frac{(1+T^{-2+\epsilon}-y)^2}{1-y}>1-y>T^{-2+\epsilon/2}.
\end{equation}
Now the required estimate $x_{-}(n/l,y)\gg T^{-2+\epsilon}$  follows from \eqref{f-to ynsn}, \eqref{x- y>1 est1}, and \eqref{x- y>1 est2}.
\end{proof}

\begin{lem}\label{lemma S8 est}
The following estimate holds
\begin{equation}\label{S8 est}
\sum_{|l|\ll T^{1+\epsilon}}\frac{S_{8}(l)}{|l|^2}\ll T^{-A}.
\end{equation}
\end{lem}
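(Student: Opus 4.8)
The plan is to treat the range $1<y_n<1+T^{-2+\epsilon}$ by the same method used for the range $1-T^{-2+\epsilon}<y_n<1$ in Lemma \ref{lemma S5 est}. Keeping the parametrization \eqref{2l,n abcd} with $a,b\ge0$ (recall $l\in I$), it is now convenient to set $c_1=c-a$, $d_1=d-b$, so that by \eqref{yn def} and \eqref{sn def}
\begin{equation*}
y_n-1=\frac{ac_1+bd_1}{a^2+b^2},\qquad s_n=\frac{ad_1-bc_1}{a^2+b^2}.
\end{equation*}
The defining conditions of $N_8(l,T)$ give $0<ac_1+bd_1<(a^2+b^2)T^{-2+\epsilon}$; since $ac_1+bd_1$ is a positive integer and $a^2+b^2=|2l|^2\ll T^{2+\epsilon}$, this forces
\begin{equation*}
T^{2-\epsilon}<a^2+b^2\ll T^{2+\epsilon},\qquad 0<ac_1+bd_1\ll T^{\epsilon}.
\end{equation*}

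The key point, and the step I expect to require the most care, is the arithmetic lower bound
\begin{equation}\label{eq:S8snlb}
n\in N_8(l,T)\ \Longrightarrow\ |s_n|\gg T^{-1-\epsilon},
\end{equation}
which is the mirror image of \eqref{|1-yn| small sn big} and is proved by the same case analysis. If $c_1d_1\ge0$, then $c_1,d_1\ge0$ because $ac_1+bd_1>0$; since $a$ and $b$ cannot both exceed $T^{\epsilon}$ (otherwise $ac_1+bd_1\ll T^{\epsilon}$ would force $c_1=d_1=0$, i.e.\ $n=2l$, which is excluded), one of them, say $b$, is $\ll T^{\epsilon}$, hence $T^{1-\epsilon}\ll a\ll T^{1+\epsilon}$ and therefore $c_1=0$, $d_1\neq0$, giving $|s_n|=ad_1/(a^2+b^2)\gg 1/a\gg T^{-1-\epsilon}$. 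If $c_1d_1\le0$, say $c_1\ge0\ge d_1$ (the other sub-case is symmetric), then $ac_1+bd_1>0$ yields $ac_1>b(-d_1)$, whence $bc_1+a(-d_1)>(-d_1)(a^2+b^2)/a$ and $|s_n|>(-d_1)/a\gg T^{-1-\epsilon}$; the boundary situations with $c_1=0$ or $d_1=0$ are handled as above. Apart from the sign bookkeeping, which differs slightly from Lemma \ref{lemma S5 est}, this is routine.

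Granting \eqref{eq:S8snlb}, the lemma follows quickly. On $1-T^{-B}<y<1$ we have $f_{-}(n/l,y)\ge s_n^2\gg T^{-2-\epsilon}$ by \eqref{f-to ynsn}, so Lemma \ref{lemma I estimate ynear1} with $B$ sufficiently large gives $\int_{1-T^{-B}}^{1}J_{-}(n/l,y,T)(1-y^2)^{-1/2}\,dy\ll T^{-A}$. On $0<y<1-T^{-B}$ we have $y<1<y_n$, hence $(y-y_n)^2>(1-y)^2$, and \eqref{f-to ynsn} gives
\begin{equation*}
x_{-}(n/l,y)=\frac{(y-y_n)^2+s_n^2}{1-y^2}\ge\max\left(\frac{1-y}{2},\ \frac{s_n^2}{2(1-y)}\right)\gg T^{-2+\epsilon'}
\end{equation*}
for some fixed $\epsilon'>0$: when $1-y\ge T^{-2+\epsilon/2}$ the first term is $\gg T^{-2+\epsilon/2}$, and when $1-y<T^{-2+\epsilon/2}$ the second term is $\gg s_n^2T^{2-\epsilon/2}\gg T^{-3\epsilon}$ by \eqref{eq:S8snlb}. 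Thus \eqref{I integral estimate2} applies and
\begin{equation*}
\int_{0}^{1-T^{-B}}J_{-}(n/l,y,T)\frac{dy}{(1-y^2)^{1/2}}\ll T^{-A}\int_{0}^{1-T^{-B}}\frac{dy}{1-y^2}\ll T^{-A+\epsilon}.
\end{equation*}
Hence $\int_{0}^{1}J_{-}(n/l,y,T)(1-y^2)^{-1/2}\,dy\ll T^{-A}$ for every $n\in N_8(l,T)$. Since there are $\ll|l|^2T^{\epsilon}$ such $n$ and $|n^2-4l^2|^{2\theta}\ll|l|^{4\theta}T^{\epsilon}$, we obtain $S_8(l)\ll T^{-A}|l|^{2+4\theta}$, and summing over $|l|\ll T^{1+\epsilon}$ (recalling that $A$ is arbitrary) proves \eqref{S8 est}.
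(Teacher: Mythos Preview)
Your proof is correct and follows essentially the same route as the paper's: establish the arithmetic lower bound $|s_n|\gg T^{-1-\epsilon}$ exactly as in Lemma~\ref{lemma S5 est}, dispose of $y$ near $1$ via Lemma~\ref{lemma I estimate ynear1}, and then show $x_{-}(n/l,y)$ is uniformly large enough to invoke \eqref{I integral estimate2}. The only cosmetic difference is in this last step: where you split according to whether $1-y\gtrless T^{-2+\epsilon/2}$ to get $x_{-}\gg T^{-2+\epsilon'}$, the paper simply uses $x_{-}\ge (1-y)+s_n^2/(1-y)\gg |s_n|\gg T^{-1-\epsilon}$, which is slightly cleaner but leads to the same conclusion.
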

\begin{proof}
As in Lemma \ref{lemma S5 est}  (see the proof of \eqref{|1-yn| small sn big}) we show that
\begin{equation}\label{|1-yn| small sn big2}
1<y_n<1+T^{-2+\epsilon}\,\Rightarrow\, |s_n|\gg T^{-1-\epsilon}.
\end{equation}
Similarly to the previous case, the contribution of the integral over $1-T^{-B}<y<1$ is negligible.  Furthermore, by \eqref{f-to ynsn} we have
\begin{equation*}
x_{-}(n/l,y)=\frac{(y_n-y)^2+s_n^2}{1-y^2}>\frac{(1-y)^2+s_n^2}{1-y}=1-y+\frac{s_n^2}{1-y}\gg s_n\gg T^{-1-\epsilon}.
\end{equation*}
Thus by  \eqref{I integral estimate2} the contribution of the integral over $0<y<1-T^{-B}$ is also negligible.

\end{proof}

\begin{lem}\label{lemma S9 est}
The following estimate holds
\begin{equation}\label{S9 est}
\sum_{|l|\ll T^{1+\epsilon}}\frac{S_{9}(l)}{|l|^2}\ll T^{-A}.
\end{equation}
\end{lem}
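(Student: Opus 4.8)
The plan is to treat this case by combining the parametrization of the set $\{n:\ s_n=0\}$ used in the proof of Lemma~\ref{lemma S3 est} with the lower bound on $x_-$ obtained in the proof of Lemma~\ref{lemma S7 est}; this turns out to be the simplest of the $S_j(l)$. By \eqref{sn def} the condition $s_n=0$ is equivalent to $ad-bc=0$, whose integer solutions are $n_j=c_j+id_j$ with $c_j=aj/(a,b)$ and $d_j=bj/(a,b)$, $j\in\Z$ (see \eqref{systemsolutions2}); then \eqref{yn def} gives $y_{n_j}=j/(a,b)$ and $|n_j|=|2l|\,j/(a,b)$. The requirement $y_{n_j}>1$ forces the integer $j$ to satisfy $j\geq(a,b)+1$, hence
\begin{equation*}
y_{n_j}-1=\frac{j-(a,b)}{(a,b)}\geq\frac{1}{(a,b)}\geq\frac{1}{|2l|}\gg T^{-1-\epsilon},
\end{equation*}
since $|2l|=\sqrt{a^2+b^2}\ll T^{1+\epsilon}$. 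Moreover $n_j\in N(l,T)$ requires $|n_j|\ll T^{\epsilon}|l|$, i.e. $j\ll(a,b)T^{\epsilon}$, so for each $l$ only $O((a,b)T^{\epsilon})$ values of $j$ are relevant.

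Because $s_n=0$, the formulas \eqref{f-to ynsn} simplify to $f_-(n_j/l,y)=(y-y_{n_j})^2$ and $x_-(n_j/l,y)=(y-y_{n_j})^2/(1-y^2)$. For $1-T^{-B}<y<1$ we have $f_-(n_j/l,y)=(y-y_{n_j})^2\geq(y_{n_j}-1)^2\gg T^{-2-2\epsilon}$, so choosing $B$ large enough Lemma~\ref{lemma I estimate ynear1} yields $\int_{1-T^{-B}}^{1}J_-(n_j/l,y,T)\,(1-y^2)^{-1/2}\,dy\ll T^{-A}$ for every $A$. For $0<y<1-T^{-B}$ I would argue as in Lemma~\ref{lemma S7 est} that $x_-(n_j/l,y)\gg T^{-2+\epsilon/2}$: in the range $1-T^{-2+\epsilon/2}<y<1-T^{-B}$,
\begin{equation*}
\frac{(y-y_{n_j})^2}{1-y^2}\geq\frac{(y_{n_j}-1)^2}{2(1-y)}\gg\frac{T^{-2-2\epsilon}}{T^{-2+\epsilon/2}}=T^{-5\epsilon/2},
\end{equation*}
while for $0<y\leq1-T^{-2+\epsilon/2}$, using $y_{n_j}-y>1-y>0$,
\begin{equation*}
\frac{(y-y_{n_j})^2}{1-y^2}>\frac{(1-y)^2}{1-y^2}=\frac{1-y}{1+y}\gg 1-y\geq T^{-2+\epsilon/2}.
\end{equation*}
Since $T^{-2+\epsilon/2}=T^{\epsilon'-2}$ with $\epsilon'=\epsilon/2>0$, the estimate \eqref{I integral estimate2} applies on all of $0<y<1-T^{-B}$, and altogether $\int_0^{1}J_-(n_j/l,y,T)\,(1-y^2)^{-1/2}\,dy\ll T^{-A}$ for every $A$, uniformly in the admissible $j$ and $l$.

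Inserting this into \eqref{decomposition 2} and \eqref{S9 est}, and using that $|n_j^2-4l^2|^{2\theta}\ll T^{4\theta+\epsilon}$, that the number of admissible pairs $(l,j)$ is $\ll T^{O(1)}$, and that $|l|^{-2}\leq1$, we get that the double sum in \eqref{S9 est} is $\ll T^{-A+O(1)}$, hence $\ll T^{-A}$ after renaming $A$. I do not expect any genuine obstacle in this lemma: the only point needing care is the separation $y_{n_j}-1\gg T^{-1-\epsilon}$, which is immediate from the integrality of $j$, and the remainder of the argument merely transcribes the estimates already established for $S_3$ and $S_7$.
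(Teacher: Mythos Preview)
Your proof is correct and follows essentially the same route as the paper: use the parametrization from Lemma~\ref{lemma S3 est} to deduce $y_n-1\geq 1/(a,b)\gg T^{-1-\epsilon}$, then handle $y$ near $1$ via Lemma~\ref{lemma I estimate ynear1} and the remaining range via the lower bound on $x_-$ and \eqref{I integral estimate2}. The only cosmetic difference is that you split $0<y<1-T^{-B}$ into two subranges in the style of Lemma~\ref{lemma S7 est}, whereas the paper disposes of it with the single inequality $x_-(n/l,y)\geq((1-y)+(y_n-1))^2/(1-y^2)\gg(1-y)+(y_n-1)\gg T^{-1-\epsilon}$.
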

\begin{proof}
As in Lemma \ref{lemma S3 est}  we obtain that 
\begin{equation}\label{|ynj-1| big2}
y_{n}-1\gg\frac{1}{T^{1+\epsilon}},
\end{equation}
and therefore, we conclude that the contribution of $1-T^{-B}<y<1$ is negligibly small.  Next,  using \eqref{f-to ynsn} and \eqref{|ynj-1| big2} we obtain 
\begin{equation*}
x_{-}(n/l,y)=\frac{(y_n-y)^2+s_n^2}{1-y^2}>\frac{((1-y)+(y_n-1))^2}{1-y}>1-y+(y_n-1)\gg T^{-1-\epsilon}.
\end{equation*}
Finally, by  \eqref{I integral estimate2} the contribution of the integral over $0<y<1-T^{-B}$ is negligibly small.
\end{proof}
\section{Proof of Theorem \ref{thm:main}}\label{PGT}
 Assume that for some $\alpha>0$
\begin{equation}
\sum_{T<r_j<2T}\alpha_{j}|L(\sym^2 u_{j},s)|^2 \ll T^{3+2\alpha}.
\end{equation}
Then following the arguments on \cite[p. 5363]{BCCFL} we obtain
\begin{equation}
\sum_{0<r_j\leq T}X^{ir_j}\ll T^{(7+2\alpha)/4+\epsilon}X^{1/4+\epsilon}+T^{2}.
\end{equation}
Using this estimate and following the proof of \cite[Corollary 1.4]{BBCL} we conclude that the error term for  $\pi_{\Gamma}(X)$ is bounded by
\begin{equation}
O\left(\frac{X^{2+\alpha/2+\epsilon}}{Y^{3/4+\alpha/2}}+X^{(4\theta+6)/5+\epsilon}Y^{2/5}+\frac{X^2}{Y}+X^{1+\epsilon}\right).
\end{equation}
Choosing 
\begin{equation}
Y=X^{\frac{10\alpha+16(1-\theta)}{23+10\alpha}},
\end{equation}
we have
\begin{equation}
O(X^{\frac{3}{2}+\frac{\alpha(2+16\theta)+24\theta-1}{46+20 \alpha}+\epsilon}).
\end{equation}
Theorem \ref{thm:2moment}  allows us to take $\alpha=2\theta$ which yields Theorem  \ref{thm:main}.

\section*{Acknowledgments}
Research of Olga Balkanova was funded by RFBR, project number 19-31-60029.

\end{document}